\title{Linear Convergence of Variance-Reduced Stochastic Gradient without Strong Convexity}
\author{
Pinghua Gong,~~~Jieping Ye\\
Department of Computational Medicine and Bioinformatics\\
University of Michigan, MI 48109\\
gongp@umich.edu,~~jpye@umich.edu
}
\def\Pr{\mathrm{Pr}}
\newcommand{\Expectg}[1]{\mathbb{E}\left[#1\right]}
\newcommand{\Expect}[2]{\mathbb{E}_{#1}\left[#2\right]}
\newcommand{\Argmin}{\mathop{\arg\min}}
\newcommand{\EqRef}[1]{Eq.~(\ref{#1})}
\newcommand{\SecRef}[1]{Section~\ref{#1}}
\newcommand{\ApdRef}[1]{Supplement~\ref{#1}}
\newcommand{\ThemRef}[1]{Theorem~\ref{#1}}
\newcommand{\LemmaRef}[1]{Lemma~\ref{#1}}
\newcommand{\DefRef}[1]{Definition~\ref{#1}}
\newcommand{\RemarkRef}[1]{Remark~\ref{#1}}
\newcommand{\FigRef}[1]{Figure~\ref{#1}}
\newcommand{\AlgRef}[1]{Algorithm~\ref{#1}}
\newtheorem{theorem}{Theorem}
\newtheorem{lemma}{Lemma}
\newtheorem{definition}{Definition}
\newtheorem{remark}{Remark}
\newenvironment{proof}[1][Proof]{\begin{trivlist}
\item[\hskip \labelsep {\bfseries #1}]}{\end{trivlist}}
\def\qed {{
\parfillskip=0pt 
\widowpenalty=10000 
\displaywidowpenalty=10000 
\finalhyphendemerits=0 
%
\leavevmode 
\unskip 
\nobreak 
\hfil 
\penalty50 
\hskip.2em 
\null 
\hfill 
$\square$
%
\par}} 
\begin{document}

\maketitle

\begin{abstract}
Stochastic gradient algorithms estimate the gradient based on only one or a few samples
and enjoy low computational cost per iteration. They have been widely used in large-scale optimization problems.
However, stochastic gradient algorithms are usually slow to converge and achieve sub-linear convergence rates,
due to the inherent variance in the gradient computation. To accelerate the convergence,
some variance-reduced stochastic gradient algorithms, e.g., proximal stochastic variance-reduced gradient (Prox-SVRG) algorithm,
have recently been proposed to solve strongly convex problems. Under the strongly convex condition,
these variance-reduced stochastic gradient algorithms achieve a linear convergence rate. However,
many machine learning problems are convex but not strongly convex. In this paper,
we introduce Prox-SVRG and its projected variant called Variance-Reduced Projected Stochastic Gradient (VRPSG) to
solve a class of non-strongly convex optimization problems widely used in machine learning. As the main technical contribution of this paper,
we show that both VRPSG and Prox-SVRG achieve a linear convergence rate without strong convexity.
A key ingredient in our proof is a Semi-Strongly Convex (SSC) inequality which is the first to be rigorously proved for a class of non-strongly convex problems in both constrained
and regularized settings. Moreover, the SSC inequality is independent of algorithms and may be applied to analyze other stochastic gradient algorithms
besides VRPSG and Prox-SVRG, which may be of independent interest.
To the best of our knowledge, this is the first work that establishes the linear convergence rate for the
variance-reduced stochastic gradient algorithms on solving both constrained and regularized problems without strong convexity.
\end{abstract}

\section{Introduction}\label{sec:introduction}
Convex optimization has played an important role in machine learning as many machine learning problems can be cast into a convex optimization problem.
Nowadays the emergence of big data makes the optimization problem challenging to solve and first-order stochastic gradient algorithms are often
preferred due to their simplicity and low per-iteration cost. The stochastic gradient algorithms estimate the gradient based on only one or a few samples,
and have been extensively studied in large-scale optimization problems \cite{zhang2004solving,duchi2009efficient,hu2009accelerated,xiao2010dual,hazan2011beyond,rakhlin2011making,lan2012optimal,shamir2013stochastic}.
In general, the standard stochastic gradient algorithm randomly draws only one or a few samples at each iteration to compute the gradient and then update
the model parameter. The standard stochastic gradient algorithm estimates the gradient without involving all samples and the computational cost per iteration is independent
of the sample size. Thus, it is very suitable for large-scale problems. However, the standard stochastic gradient algorithms usually suffer from slow convergence.
In particular, even under the strongly convex condition, the convergence rate of standard stochastic gradient algorithms is only sub-linear.
In contrast, it is well-known that full gradient descent algorithms can achieve linear convergence rates
with the strongly convex condition \cite{nesterov2013gradient}. It has been recognized that the slow convergence of
the standard stochastic gradient algorithm results from the inherent variance in the gradient evaluation.
To this end, some (implicit or explicit) variance-reduced stochastic gradient algorithms have been proposed recently; examples include
Stochastic Average Gradient (SAG) \cite{le2012stochastic}, Stochastic Dual Coordinate Ascent (SDCA) \cite{shalev2012proximal,shalev2013stochastic},
Epoch Mixed Gradient Descent (EMGD) \cite{zhang2013linear}, Stochastic Variance Reduced Gradient (SVRG) \cite{johnson2013accelerating},
Semi-Stochastic Gradient Descent (S2GD) \cite{konevcny2013semi} and Proximal Stochastic Variance Reduced Gradient (Prox-SVRG) \cite{xiao2014proximal}.
Under the strongly convex condition, these variance-reduced stochastic gradient algorithms achieve linear
convergence rates. However, in practical problems, many objective functions to be minimized are convex but not strongly convex.
For example, the least squares regression and logistic regression problems are extensively studied and
both objective functions are not strongly convex
when the data matrix is not full column rank.
Moreover, even without the strongly convex condition, linear convergence rates can be achieved for some full
gradient descent algorithms \cite{luo1992convergence,luo1993error,tseng2009coordinate,tseng2010approximation,hou2013linear,wang2014iteration}.
This motivates us to address the following question: can some variance-reduced stochastic gradient algorithms achieve a linear convergence rate
under mild conditions but without strong convexity?

In this paper, we adopt Prox-SVRG \cite{xiao2014proximal} and its projected variant called Variance-Reduced Projected Stochastic Gradient (VRPSG)
to solve a class of non-strongly convex optimization problems.
Our major technical contribution is to establish a linear convergence rate for both VRPSG and Prox-SVRG without strong convexity.
The key challenge to prove the linear convergence for both VRPSG and Prox-SVRG lies in how to establish a Semi-Strongly Convex (SSC) inequality
which provides an upper bound of the distance of any feasible solution
to the optimal solution set by the gap between the objective function value at that feasible solution and the optimal objective function value.
The SSC inequality can be easily established under the condition that the objective function is strongly convex.
However, it is not the case without the strongly convex condition.
To the best of our knowledge, we are the first
to rigorously prove the SSC inequality for a class of non-strongly convex problems in both constrained and regularized settings.
Moreover, the SSC inequality may be applied to analyze other stochastic gradient algorithms besides VRPSG and Prox-SVRG,
which may be of independent interest (see \RemarkRef{remark:sscineq} in \SecRef{sec:sscineq}).
Note that existing convergence analyses for
full gradient methods \cite{luo1992convergence,luo1993error,tseng2009coordinate,tseng2010approximation,hou2013linear,wang2014iteration}
cannot be directly extended to the stochastic setting as they rely on a different inequality involving full gradient.
Thus, it is nontrivial to establish the linear convergence rate for the variance-reduced stochastic gradient
algorithms on solving both constrained and regularized problems without strong convexity.

\section{Linear Convergence of VRPSG and Prox-SVRG}\label{sec:vrpsgalg}
We first present both constrained and regularized optimization problems\footnote{We present constrained and regularized problems separately, however the analysis of regularized problems depends on some key results established for constrained problems.}, discuss some mild assumptions about the problems and show some examples that
satisfy the assumptions. Then we present the Semi-Strongly Convex (SSC) inequality and introduce VRPSG and Prox-SVRG to solve
the optimization problems. Finally, we state our main results on linear convergence for both VRPSG and Prox-SVRG algorithms. The proofs are deferred to the following section.

\subsection{Optimization Problems, Assumptions and Examples}\label{sec:vrpsg.assumption}
\textbf{Constrained Problems}: We first consider the following constrained optimization problem:
\begin{align}
\min_{\mathbf{w}\in\mathcal{W}}\left\{f(\mathbf{w})=h(X\mathbf{w})+\mathbf{q}^T\mathbf{w}\right\},
\mathrm{where}~\mathbf{w,q}\in\mathbb{R}^{d},~X\in\mathbb{R}^{n\times d},\label{eq:constrainedprob}
\end{align}
and make the following assumptions on the above problem:
\begin{itemize}
\item [\textbf{A1}]$f(\mathbf{w})$ is the average of $n$ convex components $f_i(\mathbf{w})$, that is,
$f(\mathbf{w})=\frac{1}{n}\sum_{i=1}^nf_i(\mathbf{w})$,
where $\nabla f(\mathbf{w})$ and $\nabla f_i(\mathbf{w})$ are Lipschitz continuous with constants $L$ and $L_i$, respectively.
\item[\textbf{A2}] The effective domain $\mathrm{dom}(h)$ of $h$ is open and non-empty. Moreover,
$h(\mathbf{u})$ is continuously differentiable on $\mathrm{dom}(h)$ and strongly convex on any convex compact subset of $\mathrm{dom}(h)$.
\item[\textbf{A3}] The constraint set $\mathcal{W}$ is a polyhedral set, e.g. $W=\left\{\mathbf{w}\in\mathbb{R}^d: C\mathbf{w}\leq\mathbf{b}\right\}$ for some $C\in\mathbb{R}^{l\times d},\mathbf{b}\in\mathbb{R}^{l}$.
Moreover, the optimal solution set $\mathcal{W}^\star_c$ to \EqRef{eq:constrainedprob} is non-empty.
\end{itemize}
\vspace{-0.0cm}
\begin{remark}
Assumption \textbf{A2} indicates that $h(\mathbf{u})$ may not be strongly convex on $\mathrm{dom}(h)$ but strictly convex on $\mathrm{dom}(h)$.
Notice that $f(\cdot)$ is convex, so $\mathcal{W}^\star_c$ must be convex and the Euclidean projection of any $\mathbf{w}\in\mathbb{R}^d$ onto $\mathcal{W}^\star_c$ is unique.
Moreover, for any finite $\mathbf{w},\mathbf{u}\in\mathcal{W}$, $X\mathbf{w}$ and $X\mathbf{u}$ must belong to a convex compact subset $\mathcal{U}\subseteq\mathrm{dom}(h)$.
Thus, by assumption \textbf{A2}, there exists a $\mu>0$ such that
\begin{align}
h(X\mathbf{w})\geq & h(X\mathbf{u})+\nabla h(X\mathbf{u})^T(X\mathbf{w}-X\mathbf{u})+\frac{\mu}{2}\|X\mathbf{w}-X\mathbf{u}\|^2,~\forall X\mathbf{w},X\mathbf{u}\in\mathcal{U}.\nonumber
\end{align}
\end{remark}

\textbf{Example 1 (Constrained Problems)}: There are many examples that satisfy assumptions \textbf{A1-A3}, including three popular problems: $\ell_1$-constrained least squares (i.e., Lasso \cite{tibshirani1996regression}),
$\ell_1$-constrained logistic regression and the dual problem of linear SVM. Specifically, for the
$\ell_1$-constrained least squares: the objective function is $f(\mathbf{w})=\frac{1}{2n}\|X\mathbf{w}-\mathbf{y}\|^2$; the convex component is $f_i(\mathbf{w})=\frac{1}{2}(\mathbf{x}_i^T\mathbf{w}-y_i)^2$,
where $\mathbf{x}_i^T$ is the $i$-th row of $X$; the strongly convex function is $h(\mathbf{u})=\frac{1}{2n}\|\mathbf{u}-\mathbf{y}\|^2$; the polyhedral set is
$\mathcal{W}=\{\mathbf{w}:\|\mathbf{w}\|_1\leq\tau\}=\{\mathbf{w}:C\mathbf{w}\leq\mathbf{b}\}$, where each row of $C\in\mathbb{R}^{2^d\times d}$ is a $d$-tuples of the form $[\pm1,\cdots,\pm1]$,
and each entry of $\mathbf{b}\in\mathbb{R}^{2^d}$ is $\tau$.
For the $\ell_1$-constrained logistic regression: the objective function is $f(\mathbf{w})=\frac{1}{n}\sum_{i=1}^n\log(1+\exp(-y_i\mathbf{x}_i^T\mathbf{w}))$; the convex component is $f_i(\mathbf{w})=\log(1+\exp(-y_i\mathbf{x}_i^T\mathbf{w}))$,
where $X=[\mathbf{x}_1^T;\cdots;\mathbf{x}_n^T]^T$; the strongly convex function\footnote{The function $h(\mathbf{u})=\frac{1}{n}\sum_{i=1}^n\log(1+\exp(-y_iu_i))$ is strictly convex on
$\mathbb{R}^n$ and strongly convex on any convex compact subset of $\mathbb{R}^n$.} is $h(\mathbf{u})=\frac{1}{n}\sum_{i=1}^n\log(1+\exp(-y_iu_i))$; the polyhedral set is the same as the $\ell_1$-constrained least squares.
For the dual problem of linear SVM, the objective is a convex quadratic function which satisfies assumptions \textbf{A1-A2}; the constraint set is $\mathcal{W}=\{\mathbf{w}:l_i\leq w_i\leq u_i\}$ with $l_i\leq u_i~(i=1,\cdots,d)$, which satisfies the assumption \textbf{A3}.
Additional constraint sets that satisfy the assumption \textbf{A3} also include $\ell_{1,\infty}$-ball set $\mathcal{W}=\{\mathbf{w}:\sum_{i=1}^{T}\|\mathbf{w}_{\mathcal{G}_i}\|_{\infty}\leq\tau\}$ with $\cup_{i=1}^T\mathcal{G}_i=\{1,\cdots,d\}$ and $\mathcal{G}_i\cap\mathcal{G}_j=\emptyset$ for $i\neq j$ \cite{quattoni2009efficient}.

\textbf{Regularized Problems}: Now let us consider the following regularized optimization problem:
\begin{align}
\min_{\mathbf{w}\in\mathbb{R}^d}\left\{F(\mathbf{w})=f(\mathbf{w})+r(\mathbf{w})=h(X\mathbf{w})+r(\mathbf{w})\right\},~\text{where}~X\in\mathbb{R}^{n\times d}, \label{eq:regularizedprob}
\end{align}
and we make the following additional assumption besides assumptions \textbf{A1,~A2}:
\begin{itemize}
\item[\textbf{B1}] $r(\mathbf{w})$ is convex; the epigraph of $r(\mathbf{w})$ defined by $\{(\mathbf{w},\varpi): r(\mathbf{w})\leq \varpi\}$ is a polyhedral set and the optimal solution set $\mathcal{W}^\star_r$ to \EqRef{eq:regularizedprob} is non-empty.
\end{itemize}
\textbf{Example 2 (Regularized Problems)}: Examples that satisfy assumptions \textbf{A1,~A2} and \textbf{B1} include $\ell_1$ ($\ell_{1,\infty}$)-regularized least squares and logistic regression problems.

\subsection{Semi-Strongly Convex (SSC) Problem and Inequality}
Let us now introduce the Semi-Strongly Convex (SSC) property.
\begin{definition}\label{def:sscineq}
The problem in \EqRef{eq:constrainedprob} is SSC if there exists a constant $\beta>0$ such that for any finite $\mathbf{w}\in\mathcal{W}$:
\begin{align}
f(\mathbf{w})-f^\star\geq\frac{\beta}{2}\left\|\mathbf{w}-\Pi_{\mathcal{W}^\star_c}(\mathbf{w})\right\|^2,~\text{where}~f^\star~\text{is the optimal objective function value of \EqRef{eq:constrainedprob}}.\nonumber
\end{align}
The problem in \EqRef{eq:regularizedprob} is SSC if for any finite $\mathbf{w}\in\mathbb{R}^d$, there exists a constant $\beta>0$ such that
\begin{align}
F(\mathbf{w})-F^\star\geq\frac{\beta}{2}\left\|\mathbf{w}-\Pi_{\mathcal{W}^\star_r}(\mathbf{w})\right\|^2,~\text{where}~F^\star~\text{is the optimal objective function value of \EqRef{eq:regularizedprob}}.\nonumber
\end{align}
\end{definition}
In \SecRef{sec:sscineq}, we will rigorously prove that the problem in \EqRef{eq:constrainedprob} is SSC under assumptions \textbf{A1-A3} and the problem in \EqRef{eq:regularizedprob} is SSC under assumptions \textbf{A1, A2} and \textbf{B1}, which is a key to show the linear convergence of VRPSG and Prox-SVRG to be given below. To the best of our knowledge, we are the first to provide a rigorous proof of the
SSC inequality for both constrained and regularized problems in \EqRef{eq:constrainedprob} and \EqRef{eq:regularizedprob} without strong convexity.
Moreover, the SSC inequality may be of independent interest, as it may be applied to analyze other stochastic gradient algorithms (see \RemarkRef{remark:sscineq} in \SecRef{sec:sscineq}).
\subsection{Algorithms and Main Results}
\textbf{VRPSG for solving \EqRef{eq:constrainedprob}}: A standard stochastic method for solving \EqRef{eq:constrainedprob} is the projected stochastic gradient algorithm which generates the sequence $\{\mathbf{w}^k\}$ as follows:
\begin{align}
\mathbf{w}^{k}=\Pi_{\mathcal{W}}(\mathbf{w}^{k-1}-\eta_k\nabla f_{i_k}(\mathbf{w}^{k-1})),\label{eq:projstocgrad}
\end{align}
where $i_k$ is randomly drawn from $\{1,\cdots,n\}$ in uniform. At each iteration, the projected stochastic gradient algorithm
computes the gradient involving only a single sample and thus is suitable for large-scale problems with large $n$. Although we have
an unbiased gradient estimate at each step, i.e., $\Expectg{\nabla f_{i_k}(\mathbf{w}^{k-1})}=\nabla f(\mathbf{w}^{k-1})$, the variance $\Expectg{\|\nabla f_{i_k}(\mathbf{w}^{k-1})-\nabla f(\mathbf{w}^{k-1})\|^2}$ introduced
by sampling makes the step size $\eta_k$ diminishing to guarantee convergence, which finally results in slow convergence. Therefore, the key for improving the convergence rate
of the projected stochastic gradient algorithm is to reduce the variance by sampling.
Motivated by the variance-reduce techniques in \cite{johnson2013accelerating,xiao2014proximal},
we consider a projected variant of Prox-SVRG \cite{xiao2014proximal} called Variance-Reduced Projected Stochastic Gradient (VRPSG) (in \AlgRef{alg:projstocgrad}) to efficiently solve \EqRef{eq:constrainedprob} [i.e., VRPSG is equivalent to Prox-SVRG by using a proximal step instead of the projection step in \AlgRef{alg:projstocgrad} (Line 10)].
Both VRPSG and Prox-SVRG employ a two-layer loop to reduce the variance. We have the following convergence result:

\vspace{-0.1cm}
\begin{algorithm}[H]\label{alg:projstocgrad}
Choose the update frequency $m$ and the learning rate $\eta$\;
Initialize $\tilde{\mathbf{w}}^0\in\mathcal{W}$\;
Choose $p_i\in(0,1)$ such that $\sum_{i=1}^np_i=1$\;
   \For{$k=1,2,\cdots$}
   {
    $\tilde{\bm{\xi}}^{k-1}=\nabla f(\tilde{\mathbf{w}}^{k-1})$\;
    $\mathbf{w}^k_0=\tilde{\mathbf{w}}^{k-1}$\;
        \For{$t=1,2,\cdots,m$}
        {Randomly pick $i^k_t\in\{1,\cdots,n\}$ according to $P=\{p_1,,\cdots,p_n\}$\;
        $\mathbf{v}_{t}^k=(\nabla f_{i^k_t}(\mathbf{w}_{t-1}^{k}) - \nabla f_{i^k_t}(\tilde{\mathbf{w}}^{k-1}))/(np_{i^k_t}) + \tilde{\bm{\xi}}^{k-1}$\;
        $\mathbf{w}_{t}^{k}=\Pi_{\mathcal{W}}(\mathbf{w}_{t-1}^{k}-\eta\mathbf{v}_{t}^k)=\Argmin_{\mathbf{w}\in\mathcal{W}}\frac{1}{2}\|\mathbf{w}-(\mathbf{w}_{t-1}^{k}-\eta\mathbf{v}_{t}^k)\|^2$\;
        }
    $\tilde{\mathbf{w}}^{k}=\frac{1}{m}\sum_{t=1}^{m}\mathbf{w}_{t}^{k}$\;
   }
\caption{VRPSG: Variance-Reduced Projected Stochastic Gradient}
\end{algorithm}\vspace{-0.1cm}

\begin{theorem}\label{theorem:linearconvergence}
Let $\mathbf{w}^\star\in\mathcal{W}^\star_c$ be any optimal solution to \EqRef{eq:constrainedprob}, $f^\star=f(\mathbf{w}^\star)$ be the optimal objective function value in \EqRef{eq:constrainedprob}
and $L_P=\max_{i\in\{1,\cdots,n\}}[L_i/(np_i)]$ with $p_i\in(0,1),\sum_{i=1}^np_i=1$. In addition, let $0<\eta<1/(4L_P)$ and
\begin{align}
\beta=\frac{1}{\theta^2\left(\frac{1+2\|\nabla h(\mathbf{r}^\star)\|^2}{\mu}+M\right)},\label{eq:betadef}
\end{align}
where $\theta>0$ is a constant whose estimate is provided in \LemmaRef{lemma:hoffmanbound} and \RemarkRef{remark:hoffmanbound} in \ApdRef{appendix:auxlemmas};
$\mu>0$ is the strongly convex modulus of $h(\cdot)$ in some convex compact set; $M>0$ is an upper bound of $f(\mathbf{w})-f^\star$ for any $\mathbf{w}\in\mathcal{W}$;
$\mathbf{r}^\star$ is a constant vector such that $X\mathbf{w}^\star=\mathbf{r}^\star$ for all $\mathbf{w}^\star\in\mathcal{W}^\star_c$ (refer to \LemmaRef{lemma:globalsolution} in
\ApdRef{appendix:auxlemmas} for more details about $\mathbf{r}^\star$).
If $m$ is sufficiently large such that
\begin{align}
\rho=\frac{4L_P\eta(m+1)}{(1-4L_P\eta)m}+\frac{1}{\beta\eta(1-4L_P\eta)m}<1,\label{eq:linearrate}
\end{align}
then under the assumption that $\{\mathbf{w}^k_t\}$ is bounded and $\textbf{A1}-\textbf{A3}$ hold,
the VRPSG algorithm (summarized in \AlgRef{alg:projstocgrad}) achieves a linear convergence rate in expectation:
\begin{align}
\Expect{\mathcal{F}^k_m}{f(\tilde{\mathbf{w}}^k)-f^\star}\leq\rho^k(f(\tilde{\mathbf{w}}^0)-f^\star),\nonumber
\end{align}
where $\tilde{\mathbf{w}}^k$ is defined in \AlgRef{alg:projstocgrad} and $\Expect{\mathcal{F}^k_m}{\cdot}$ denotes the expectation with respect to
the random variable $\mathcal{F}^k_m$ with $\mathcal{F}^k_t~(1\leq t\leq m)$ being defined as
$\mathcal{F}^k_t=\{i^1_1,\cdots,i^1_m,i^2_1,\cdots,i^{2}_m,\cdots,i^{k-1}_1,\cdots,i^{k-1}_m,i^k_1,\cdots,i^{k}_t\}$
and $\mathcal{F}^k_0=\mathcal{F}^{k-1}_m$,
where $i^k_t$ is the sampling random variable in \AlgRef{alg:projstocgrad}.
\end{theorem}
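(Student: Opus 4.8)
The plan is to adapt the Prox-SVRG analysis of \cite{xiao2014proximal,johnson2013accelerating} to the non-strongly-convex setting, replacing every use of strong convexity of $f$ by an application of the SSC inequality of \DefRef{def:sscineq} (which, under \textbf{A1}--\textbf{A3}, holds with the $\beta$ of \EqRef{eq:betadef}, as proved in the next section). I would work one outer iteration at a time: fix $k$, and choose the reference optimum to be $\mathbf{w}^\star=\Pi_{\mathcal{W}^\star_c}(\tilde{\mathbf{w}}^{k-1})$. This choice is legitimate because $f(\mathbf{w}^\star)=f^\star$ for \emph{every} point of $\mathcal{W}^\star_c$, and it is the crucial device for handling the lack of strong convexity: it forces $\|\mathbf{w}^k_0-\mathbf{w}^\star\|^2=\|\tilde{\mathbf{w}}^{k-1}-\Pi_{\mathcal{W}^\star_c}(\tilde{\mathbf{w}}^{k-1})\|^2$, which SSC bounds by $(2/\beta)(f(\tilde{\mathbf{w}}^{k-1})-f^\star)$.

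First I would establish the variance bound for the corrected stochastic gradient. Writing $\mathbf{v}_t^k=\tilde{\bm{\xi}}^{k-1}+(\nabla f_{i^k_t}(\mathbf{w}_{t-1}^k)-\nabla f_{i^k_t}(\tilde{\mathbf{w}}^{k-1}))/(np_{i^k_t})$, the importance weights $1/(np_i)$ make $\mathbb{E}[\mathbf{v}_t^k]=\nabla f(\mathbf{w}_{t-1}^k)$, so $\mathbb{E}\|\mathbf{v}_t^k-\nabla f(\mathbf{w}_{t-1}^k)\|^2$ is dominated by a second moment; splitting the difference of component gradients around $\mathbf{w}^\star$ and invoking, for each convex $L_i$-smooth $f_i$, the co-coercivity inequality $\|\nabla f_i(\mathbf{w})-\nabla f_i(\mathbf{w}^\star)\|^2\leq 2L_i(f_i(\mathbf{w})-f_i(\mathbf{w}^\star)-\nabla f_i(\mathbf{w}^\star)^T(\mathbf{w}-\mathbf{w}^\star))$, I would obtain, after using $L_i/(np_i)\le L_P$ and averaging, the bound $\mathbb{E}\|\mathbf{v}_t^k-\nabla f(\mathbf{w}_{t-1}^k)\|^2\leq 4L_P\big((f(\mathbf{w}_{t-1}^k)-f^\star)+(f(\tilde{\mathbf{w}}^{k-1})-f^\star)\big)$. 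The cross terms $\nabla f(\mathbf{w}^\star)^T(\mathbf{w}-\mathbf{w}^\star)$ that survive averaging are nonnegative by the first-order optimality condition $\nabla f(\mathbf{w}^\star)^T(\mathbf{w}-\mathbf{w}^\star)\geq0$ for $\mathbf{w}\in\mathcal{W}$ and are simply dropped; the assumed boundedness of $\{\mathbf{w}^k_t\}$ is used here only to ensure all quantities are finite and that $M$ (via the later SSC constant) is a valid upper bound on $f(\mathbf{w}_t^k)-f^\star$, and to keep $X\mathbf{w}^k_t$ in a compact set where $h$ is $\mu$-strongly convex.

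Next I would analyze one inner step. From the projection characterization of $\mathbf{w}_t^k=\Pi_{\mathcal{W}}(\mathbf{w}_{t-1}^k-\eta\mathbf{v}_t^k)$ (equivalently, optimality of the argmin in Line~10) together with convexity and $L$-smoothness of $f$, I would expand $\mathbb{E}\|\mathbf{w}_t^k-\mathbf{w}^\star\|^2$ and, after substituting the variance bound, arrive at a one-step inequality of the form $\mathbb{E}\|\mathbf{w}_t^k-\mathbf{w}^\star\|^2\leq\|\mathbf{w}_{t-1}^k-\mathbf{w}^\star\|^2-2\eta(1-4L_P\eta)\,\mathbb{E}(f(\mathbf{w}_t^k)-f^\star)+8L_P\eta^2(f(\tilde{\mathbf{w}}^{k-1})-f^\star)$, where $\eta<1/(4L_P)$ makes the coefficient of the progress term positive. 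Summing this telescoping inequality over $t=1,\dots,m$, discarding $\|\mathbf{w}_m^k-\mathbf{w}^\star\|^2\geq0$, using $\mathbf{w}^k_0=\tilde{\mathbf{w}}^{k-1}$, and applying Jensen's inequality through $\tilde{\mathbf{w}}^k=\frac1m\sum_{t=1}^m\mathbf{w}_t^k$ and convexity of $f$ gives $2\eta(1-4L_P\eta)\,m\,\mathbb{E}(f(\tilde{\mathbf{w}}^k)-f^\star)\leq\|\tilde{\mathbf{w}}^{k-1}-\mathbf{w}^\star\|^2+8L_P\eta^2(m+1)(f(\tilde{\mathbf{w}}^{k-1})-f^\star)$. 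Finally I would apply SSC to the first term: $\|\tilde{\mathbf{w}}^{k-1}-\mathbf{w}^\star\|^2\leq(2/\beta)(f(\tilde{\mathbf{w}}^{k-1})-f^\star)$; dividing by $2\eta(1-4L_P\eta)m$ and collecting coefficients yields $\mathbb{E}(f(\tilde{\mathbf{w}}^k)-f^\star)\leq\rho\,(f(\tilde{\mathbf{w}}^{k-1})-f^\star)$ with exactly the $\rho$ of \EqRef{eq:linearrate}; taking full expectation over the history $\mathcal{F}^{k-1}_m$ and iterating $k$ times gives the theorem. The main obstacle is the bookkeeping that makes the variance bound and the one-step inequality close up with all error terms controlled solely by $f(\mathbf{w}_{t-1}^k)-f^\star$ and $f(\tilde{\mathbf{w}}^{k-1})-f^\star$ (the point where a classical proof invokes strong convexity), and then ensuring the SSC substitution is applied with the \emph{same} reference optimum $\Pi_{\mathcal{W}^\star_c}(\tilde{\mathbf{w}}^{k-1})$ used throughout the inner loop, so that the leftover distance term in the telescoped bound is precisely of the form controlled by \DefRef{def:sscineq}.
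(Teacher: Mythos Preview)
Your plan is essentially the paper's: the same variance bound via co-coercivity and the optimality condition $\nabla f(\mathbf{w}^\star)^T(\mathbf{w}-\mathbf{w}^\star)\ge0$, the same one-step descent from the projection optimality condition plus $L$-smoothness, the same telescoping and Jensen step, and the same single use of the SSC inequality at $\tilde{\mathbf{w}}^{k-1}$. One harmless difference: the paper re-projects at every inner step, setting $\bar{\mathbf{w}}^k_t=\Pi_{\mathcal{W}^\star_c}(\mathbf{w}^k_t)$ and using $\|\mathbf{w}^k_t-\bar{\mathbf{w}}^k_t\|^2\le\|\mathbf{w}^k_t-\bar{\mathbf{w}}^k_{t-1}\|^2$ before applying the descent lemma with reference $\bar{\mathbf{w}}^k_{t-1}$; your fixed choice $\mathbf{w}^\star=\Pi_{\mathcal{W}^\star_c}(\tilde{\mathbf{w}}^{k-1})$ throughout the inner loop is a valid simplification that lands on the identical summed bound.

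Two places where your sketch is imprecise and will not close as written. First, the one-step inequality you state cannot be obtained by ``substituting the variance bound'': the variance contributes $8L_P\eta^2\bigl(f(\mathbf{w}^k_{t-1})-f^\star\bigr)$ with index $t{-}1$, so at the one-step level you only have $-2\eta\bigl(f(\mathbf{w}^k_t)-f^\star\bigr)+8L_P\eta^2\bigl(f(\mathbf{w}^k_{t-1})-f^\star\bigr)+8L_P\eta^2\bigl(f(\tilde{\mathbf{w}}^{k-1})-f^\star\bigr)$; the factor $(1-4L_P\eta)$ and the $(m{+}1)$ emerge only \emph{after} summing over $t$ and shifting indices (this is why your summed inequality has $(m{+}1)$ even though your one-step version would give only $m$). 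Second, in the expansion of $\|\mathbf{w}^k_t-\mathbf{w}^\star\|^2$ the cross term $(\mathbf{w}^\star-\mathbf{w}^k_t)^T\bm{\delta}^k_t$ does \emph{not} have zero conditional expectation, because $\mathbf{w}^k_t$ depends on $i^k_t$ through the projection; the paper (following \cite{xiao2014proximal}) handles this with the auxiliary point $\hat{\mathbf{w}}^k_t=\Pi_{\mathcal{W}}(\mathbf{w}^k_{t-1}-\eta\nabla f(\mathbf{w}^k_{t-1}))$, bounding $(\mathbf{w}^k_t-\hat{\mathbf{w}}^k_t)^T\bm{\delta}^k_t\le\eta\|\bm{\delta}^k_t\|^2$ via non-expansiveness and noting that $(\hat{\mathbf{w}}^k_t-\mathbf{w}^\star)^T\bm{\delta}^k_t$ has zero mean. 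You will need this device (or an equivalent) for the argument to go through.
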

Note that the linear convergence rate $\rho$ in \EqRef{eq:linearrate} is the same with that of Prox-SVRG in \cite{xiao2014proximal}, except that
the constant $\beta>0$ is slightly more complicated. This is expected since
our convergence analysis does not require the strongly convex condition. Interested readers may refer to \ApdRef{appendix:remarktheorem} and \cite{xiao2014proximal} for more details
about the above linear convergence.

\textbf{Prox-SVRG for solving \EqRef{eq:regularizedprob}}: We use Prox-SVRG to solve \EqRef{eq:regularizedprob} [i.e., using \AlgRef{alg:projstocgrad} to solve the regularized problem in \EqRef{eq:regularizedprob} by replacing the projection step in \AlgRef{alg:projstocgrad} (Line 10) with the following proximal step]:
\begin{align}
\mathbf{w}^k_t=\Argmin_{\mathbf{w}}\left\{\frac{1}{2\eta}\left\|\mathbf{w}-(\mathbf{w}_{t-1}^{k}-\eta\mathbf{v}_{t}^k)\right\|^2+ r(\mathbf{w})\right\}.\label{eq:proxstep}
\end{align}
Next we show that the convergence analysis in \ThemRef{theorem:linearconvergence} can be accordingly extended to the regularized setting; the main result is summarized in the
following theorem:
\begin{theorem}\label{theorem:reglinearconvergence}
Let assumptions \textbf{A1,~A2} and \textbf{B1} hold. If we adopt Prox-SVRG to solve the regularized problem in \EqRef{eq:regularizedprob} [i.e., using \AlgRef{alg:projstocgrad} by replacing the projection step in
\AlgRef{alg:projstocgrad} (Line 10) with the proximal step in \EqRef{eq:proxstep}] and assume that $\{\mathbf{w}^k_t\}$ is bounded, then \ThemRef{theorem:linearconvergence} still holds by replacing \EqRef{eq:constrainedprob} and $f(\cdot)$ with
\EqRef{eq:regularizedprob} and $F(\cdot)$, respectively.
\end{theorem}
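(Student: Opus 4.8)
The plan is to mirror the proof of \ThemRef{theorem:linearconvergence} with the projection step replaced by the proximal step, so the first task is to identify precisely which structural facts about \EqRef{eq:constrainedprob} were used and verify that each has an analogue for \EqRef{eq:regularizedprob}. Reading off the statement of \ThemRef{theorem:linearconvergence}, the proof must rely on: (i) the SSC inequality for the constrained problem; (ii) a constant vector $\mathbf{r}^\star$ with $X\mathbf{w}^\star=\mathbf{r}^\star$ for all optimal $\mathbf{w}^\star$ (\LemmaRef{lemma:globalsolution}); (iii) Lipschitz continuity and convexity of the $f_i$'s and $f$ (\textbf{A1}); (iv) a one-step progress bound for the proximal/projection update in terms of $\mathbf{v}^k_t$, followed by a variance bound on $\mathbf{v}^k_t$; and (v) telescoping over $t=1,\dots,m$ together with the SSC inequality to close the recursion and extract the rate $\rho$. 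The claim of the theorem is that all of this goes through verbatim with $f\mapsto F$, $\mathcal{W}^\star_c\mapsto\mathcal{W}^\star_r$, so I would organize the proof as a sequence of "drop-in replacement" lemmas.

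First I would establish the regularized analogue of the invariant optimal residual: under \textbf{A1, A2, B1}, $F$ is again of the form $h(X\mathbf{w})+r(\mathbf{w})$ with $h$ strictly convex on compact sets, so on any compact set containing the (bounded) iterates and the optimal set, strict convexity of $h$ forces $X\mathbf{w}^\star$ to be constant across $\mathbf{w}^\star\in\mathcal{W}^\star_r$ — call it $\mathbf{r}^\star$ — exactly as in \LemmaRef{lemma:globalsolution}; the residual-based definition of $\beta$ in \EqRef{eq:betadef} then makes sense unchanged because $\nabla h(\mathbf{r}^\star)$, $\mu$, and $M$ all have the same meaning. Next I would invoke the regularized SSC inequality (which the paper promises to prove in \SecRef{sec:sscineq} under \textbf{A1, A2, B1}): $F(\mathbf{w})-F^\star\ge\frac{\beta}{2}\|\mathbf{w}-\Pi_{\mathcal{W}^\star_r}(\mathbf{w})\|^2$. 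With these two ingredients in hand, the remaining work is purely algorithmic and does not distinguish the constrained from the regularized case, because the proximal map $\mathrm{prox}_{\eta r}$ and the Euclidean projection $\Pi_{\mathcal{W}}$ share the two properties that the one-step analysis actually uses: firm nonexpansiveness, and the variational (obtuse-angle) inequality characterizing the minimizer of $\frac{1}{2\eta}\|\mathbf{w}-(\mathbf{w}^k_{t-1}-\eta\mathbf{v}^k_t)\|^2+\Psi(\mathbf{w})$ where $\Psi$ is either $\iota_{\mathcal{W}}$ or $r$. I would therefore restate the per-iteration descent lemma for the update in \EqRef{eq:proxstep}, noting that every inequality in the constrained proof in which a projection appears remains valid with $r$ playing the role of $\iota_{\mathcal{W}}$ and with the smooth part $f=h\circ X$ unchanged, so the variance bound on $\mathbf{v}^k_t$ (which involves only the $f_i$'s, not $\mathcal{W}$ or $r$) is literally the same.

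Finally I would carry out the same telescoping sum over $t=1,\dots,m$, use convexity of $F$ together with $\tilde{\mathbf{w}}^k=\frac1m\sum_{t}\mathbf{w}^k_t$ to pass from the averaged iterate to the function-value gap, and apply the regularized SSC inequality in place of the constrained one to bound the $\|\mathbf{w}^k_0-\Pi_{\mathcal{W}^\star_r}(\mathbf{w}^k_0)\|^2$ term by $\frac{2}{\beta}(F(\tilde{\mathbf{w}}^{k-1})-F^\star)$; collecting terms reproduces the contraction $\Expect{\mathcal{F}^k_m}{F(\tilde{\mathbf{w}}^k)-F^\star}\le\rho(F(\tilde{\mathbf{w}}^{k-1})-F^\star)$ with exactly the $\rho$ in \EqRef{eq:linearrate}, and iterating gives $\rho^k(F(\tilde{\mathbf{w}}^0)-F^\star)$. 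The only genuinely new content is the verification in the previous paragraph, so the writeup can legitimately be short: state the residual-invariance lemma, cite the regularized SSC inequality, point out that the proximal step satisfies the same firm-nonexpansiveness and variational inequality as the projection, and then say "the remainder of the proof of \ThemRef{theorem:linearconvergence} applies verbatim."

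The main obstacle I anticipate is \emph{not} the algorithmic recursion — that is essentially identical to Xiao--Zhang's Prox-SVRG analysis — but rather making the "boundedness of iterates $\Rightarrow$ everything happens on one compact set $\Rightarrow$ strong convexity of $h$ with a uniform modulus $\mu$ $\Rightarrow$ invariant residual $\mathbf{r}^\star$" chain fully rigorous in the regularized setting, since now there is no compact feasible set $\mathcal{W}$ to lean on and one must instead use the assumed boundedness of $\{\mathbf{w}^k_t\}$ together with the polyhedral epigraph of $r$ from \textbf{B1} to pin down a compact set that simultaneously contains the iterates, the averaged iterates, and (a relevant portion of) $\mathcal{W}^\star_r$; a secondary subtlety is confirming that the constant $\beta$ from \EqRef{eq:betadef}, whose $\theta$ comes from a Hoffman-type bound (\LemmaRef{lemma:hoffmanbound}) that in the constrained case uses the polyhedron $\mathcal{W}$, has the right analogue when the polyhedral structure instead lives in the epigraph of $r$ — but this is precisely what the promised regularized SSC inequality in \SecRef{sec:sscineq} is designed to supply, so I would defer to it rather than reprove it here.
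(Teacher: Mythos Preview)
Your proposal is correct and follows essentially the same approach as the paper's proof: invoke the regularized SSC inequality (\LemmaRef{lemma:regstrongineq}), observe that the per-iteration variance and descent bounds (Lemmas~\ref{lemma:stocgradvar} and~\ref{lemma:interbound}) are themselves adaptations of the regularized-setting results in Xiao--Zhang, and then note that the remainder of the proof of \ThemRef{theorem:linearconvergence} carries over verbatim with $f\mapsto F$ and projection $\mapsto$ proximal step. Your write-up is in fact more explicit than the paper's (which is only a paragraph), and your identification of the boundedness/compact-set and Hoffman-constant subtleties as living inside the proof of \LemmaRef{lemma:regstrongineq} rather than here is exactly right.
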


\section{Technical Proof}\label{sec:techproof}
The key to prove the linear convergence results is to establish the Semi-Strongly Convex (SSC) inequality in \DefRef{def:sscineq}. Note that the SSC inequality does not involve full gradient and is suitable to
prove linear convergence of stochastic gradient algorithms.
We want to emphasize that the linear convergence analysis
for full gradient methods \cite{luo1992convergence,luo1993error,tseng2009coordinate,tseng2010approximation,hou2013linear,wang2014iteration} rely on a different inequality $\|\mathbf{w}-\Pi_{\mathcal{W}^\star_c}(\mathbf{w})\|\leq\kappa\|\mathbf{w}-\Pi_{\mathcal{W}^\star_c}(\mathbf{w}-\nabla f(\mathbf{w}))\|$
involving full gradient and cannot be directly applied here. It is well-known that the SSC inequality holds under the strongly convex condition. However,
without the strongly convex condition, it is non-trivial to obtain this inequality.
We also note that the SSC inequality holds deterministically for all examples listed in \SecRef{sec:vrpsg.assumption}, thus it is significantly different
from the restricted strong convexity (RSC) in \cite{agarwal2012fast}, where RSC holds with high probability when the design matrix is sampled from a certain distribution.

\subsection{Proof of the SSC Inequality in Constrained and Regularized Settings}\label{sec:sscineq}
We first prove the SSC inequality (in \LemmaRef{lemma:strongineq}) for the problem in \EqRef{eq:constrainedprob} under assumptions \textbf{A1}-\textbf{A3}. Then
based on the key results in the proof of \LemmaRef{lemma:strongineq}, we prove the SSC inequality (in \LemmaRef{lemma:regstrongineq}) for the problem in \EqRef{eq:regularizedprob} under assumptions \textbf{A1,~A2} and \textbf{B1}, which is
a non-trivial extension (see \RemarkRef{remark:strongineq} for more details).
\begin{lemma}\label{lemma:strongineq}
(SSC inequality for constrained problems) Under assumptions \textbf{A1}-\textbf{A3}, the problem in \EqRef{eq:constrainedprob} satisfies the SSC inequality with $\beta>0$ defined in \EqRef{eq:betadef}.
\end{lemma}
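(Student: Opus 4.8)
The plan is to exploit the special structure $f(\mathbf{w}) = h(X\mathbf{w}) + \mathbf{q}^T\mathbf{w}$ together with the strong convexity of $h$ on compact sets and a Hoffman-type bound for polyhedral sets. The first step is to use \LemmaRef{lemma:globalsolution}: since $h$ is strictly convex on $\mathrm{dom}(h)$, the value $X\mathbf{w}^\star$ is the same for every optimal $\mathbf{w}^\star\in\mathcal{W}^\star_c$; call this common vector $\mathbf{r}^\star = X\mathbf{w}^\star$. Moreover, the linear functional $\mathbf{q}^T\mathbf{w}$ is constant on $\mathcal{W}^\star_c$, so $\mathcal{W}^\star_c$ can be written as the intersection of the polyhedron $\mathcal{W}$ with the affine slab $\{\mathbf{w}: X\mathbf{w} = \mathbf{r}^\star,\ \mathbf{q}^T\mathbf{w} = c^\star\}$ — hence $\mathcal{W}^\star_c$ is itself a polyhedral set. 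This is the structural fact that makes a Hoffman bound applicable.

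Next I would fix any finite $\mathbf{w}\in\mathcal{W}$ and let $\bar{\mathbf{w}} = \Pi_{\mathcal{W}^\star_c}(\mathbf{w})$. Because $\mathbf{w}$ and the optimal set both lie in $\mathcal{W}$ and everything is finite, $X\mathbf{w}$ and $X\bar{\mathbf{w}} = \mathbf{r}^\star$ lie in a convex compact subset $\mathcal{U}\subseteq\mathrm{dom}(h)$, so by the Remark following Assumption \textbf{A2} there is a $\mu>0$ with
\begin{align}
h(X\mathbf{w}) \geq h(\mathbf{r}^\star) + \nabla h(\mathbf{r}^\star)^T(X\mathbf{w}-\mathbf{r}^\star) + \frac{\mu}{2}\|X\mathbf{w}-\mathbf{r}^\star\|^2.\nonumber
\end{align}
Adding $\mathbf{q}^T\mathbf{w}$ to both sides and using $f^\star = h(\mathbf{r}^\star) + \mathbf{q}^T\bar{\mathbf{w}}$, together with the first-order optimality condition $\nabla f(\bar{\mathbf{w}})^T(\mathbf{w}-\bar{\mathbf{w}})\geq 0$ for the constrained problem (here $\nabla f(\bar{\mathbf{w}}) = X^T\nabla h(\mathbf{r}^\star) + \mathbf{q}$), one obtains a lower bound of the form
\begin{align}
f(\mathbf{w}) - f^\star \geq \frac{\mu}{2}\|X\mathbf{w}-\mathbf{r}^\star\|^2.\nonumber
\end{align}
So the objective gap controls $\|X(\mathbf{w}-\bar{\mathbf{w}})\|^2$, but what I actually need is control of $\|\mathbf{w}-\bar{\mathbf{w}}\|^2$; the gap along directions in the kernel of $X$ has to come from somewhere else, namely the linear term. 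The idea is to also observe that $f(\mathbf{w})-f^\star \geq \nabla f(\bar{\mathbf{w}})^T(\mathbf{w}-\bar{\mathbf{w}})$ (convexity), which bounds a first-order quantity, and to combine the two. Concretely, I would show $\mathbf{w}$ is feasible for the "almost-optimal" polyhedral system obtained by relaxing the defining equalities of $\mathcal{W}^\star_c$ by amounts controlled by $\sqrt{f(\mathbf{w})-f^\star}$ and $f(\mathbf{w})-f^\star$ respectively, i.e. $C\mathbf{w}\leq\mathbf{b}$, $\|X\mathbf{w}-\mathbf{r}^\star\|\leq\sqrt{2(f(\mathbf{w})-f^\star)/\mu}$, and $\mathbf{q}^T\mathbf{w} - c^\star \leq f(\mathbf{w})-f^\star$ (the last coordinate actually needing a small argument to bound $\mathbf{q}^T(\mathbf{w}-\bar{\mathbf{w}})$ from the gap and the $X$-bound). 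Then apply the Hoffman bound from \LemmaRef{lemma:hoffmanbound}: the distance from $\mathbf{w}$ to the polyhedron $\mathcal{W}^\star_c$ is at most $\theta$ times the norm of the constraint residuals, which yields $\|\mathbf{w}-\bar{\mathbf{w}}\|^2 \leq \theta^2\big(\tfrac{1+2\|\nabla h(\mathbf{r}^\star)\|^2}{\mu}+M\big)(f(\mathbf{w})-f^\star)$, and plugging in the definition of $\beta$ in \EqRef{eq:betadef} gives exactly $f(\mathbf{w})-f^\star \geq \tfrac{\beta}{2}\|\mathbf{w}-\bar{\mathbf{w}}\|^2$.

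The main obstacle is the bookkeeping in the second step: getting the component of $\mathbf{w}-\bar{\mathbf{w}}$ lying in $\ker X$ under control. The strong convexity of $h$ gives nothing in that direction, so one must use that $\mathbf{q}^T\mathbf{w}$ is constant on $\mathcal{W}^\star_c$ and that the linear term appears in $f$; the delicate point is producing a residual bound for the hyperplane $\mathbf{q}^T\mathbf{w} = c^\star$ that is correctly scaled (linearly in the gap, not its square root) while also handling the cross term $\nabla h(\mathbf{r}^\star)^T X(\mathbf{w}-\bar{\mathbf{w}})$, which is where the $\|\nabla h(\mathbf{r}^\star)\|^2$ term in $\beta$ comes from, via Young's inequality against the $\mu$-term. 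Assembling these residuals into a single vector and invoking the Hoffman bound with the right constant $\theta$ (depending on $C$, $X$, $\mathbf{q}$) is the technical heart of the argument.
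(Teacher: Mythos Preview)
Your proposal is correct and follows essentially the same route as the paper's proof: characterize $\mathcal{W}^\star_c$ as a polyhedron via \LemmaRef{lemma:globalsolution}, use the strong convexity of $h$ on a compact set plus the optimality condition to bound $\|X\mathbf{w}-\mathbf{r}^\star\|^2$ by the gap, split $\mathbf{q}^T\mathbf{w}-s^\star=\nabla f(\bar{\mathbf{w}})^T(\mathbf{w}-\bar{\mathbf{w}})-\nabla h(\mathbf{r}^\star)^T(X\mathbf{w}-\mathbf{r}^\star)$ and bound its square via $(a-b)^2\le 2a^2+2b^2$ (your ``Young's inequality''), then apply Hoffman's bound \LemmaRef{lemma:hoffmanbound} and use the upper bound $M$ on the gap to absorb the quadratic term. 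The only discrepancy is a harmless factor of $2$ in your final display (the paper obtains $\|\mathbf{w}-\bar{\mathbf{w}}\|^2\le 2\theta^2(\cdots)(f(\mathbf{w})-f^\star)$, which is exactly what matches $\beta$ in \EqRef{eq:betadef}).
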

\begin{proof}
Let $\bar{\mathbf{w}}=\Pi_{\mathcal{W}^\star_c}(\mathbf{w})$. If $\mathbf{w}\in\mathcal{W}^\star_c$, then $\bar{\mathbf{w}}=\mathbf{w}$ and the inequality holds for any $\beta>0$.
We next prove the inequality for $\mathbf{w}\in\mathcal{W},\mathbf{w}\notin\mathcal{W}^\star_c$. According to \LemmaRef{lemma:globalsolution} (in \ApdRef{appendix:auxlemmas}), we know that
there exist unique $\mathbf{r}^\star$ and $s^\star$ such that
$\mathcal{W}^\star_c=\{\mathbf{w}^\star:C\mathbf{w}^\star\leq\mathbf{b},X\mathbf{w}^\star=\mathbf{r}^\star,\mathbf{q}^T\mathbf{w}^\star=s^\star\}$ which is non-empty. For any $\mathbf{w}\in\mathcal{W}=\{\mathbf{w}:C\mathbf{w}\leq\mathbf{b}\}$, the Euclidean projection of $C\mathbf{w}-\mathbf{b}$ onto
the non-negative orthant, denoted by $[C\mathbf{w}-\mathbf{b}]^+$, is $\mathbf{0}$.
Considering \LemmaRef{lemma:hoffmanbound} (in \ApdRef{appendix:auxlemmas}), for $\mathbf{w}\in\mathcal{W}=\{\mathbf{w}:C\mathbf{w}\leq\mathbf{b}\}$, there exist a $\mathbf{w}^\star\in\mathcal{W}^\star_c$ and a constant $\theta>0$ such that
\begin{align}
\|\mathbf{w}-\bar{\mathbf{w}}\|^2\leq\|\mathbf{w}-\mathbf{w}^\star\|^2\leq\theta^2(\|X\mathbf{w}-\mathbf{r}^\star\|^2+(\mathbf{q}^T\mathbf{w}-s^\star)^2),\label{eq:wwbarineq}
\end{align}
where the first inequality is due to $\bar{\mathbf{w}}=\Pi_{\mathcal{W}^\star_c}(\mathbf{w})$ and $\mathbf{w}^\star\in\mathcal{W}^\star_c$.
By assumption $\textbf{A3}$, we know that $\mathcal{W}$ is compact. Thus, for any finite $\mathbf{w}\in\mathcal{W}$, both
$X\mathbf{w}$ and $X\bar{\mathbf{w}}$ belong to some convex compact subset $\mathcal{U}\subseteq\mathbb{R}^n$. Moreover,
we have $X\bar{\mathbf{w}}=\mathbf{r}^\star$.
Thus, by the strong convexity of $h(\cdot)$ on the subset $\mathcal{U}$, there exists a constant $\mu>0$ such that
\begin{align}
h(X\mathbf{w})-h(X\bar{\mathbf{w}})\geq\nabla h(X\bar{\mathbf{w}})^T(X\mathbf{w}-X\bar{\mathbf{w}})+\frac{\mu}{2}\|X\mathbf{w}-\mathbf{r}^\star\|^2,\nonumber
\end{align}
which together with $f(\mathbf{w})=h(X\mathbf{w})+\mathbf{q}^T\mathbf{w}$ implies that
\begin{align}
f(\mathbf{w})-f(\bar{\mathbf{w}})\geq\nabla f(\bar{\mathbf{w}})^T(\mathbf{w}-\bar{\mathbf{w}})+\frac{\mu}{2}\|X\mathbf{w}-\mathbf{r}^\star\|^2.\label{eq:strongconvexityineq}
\end{align}
Noticing that $\mathbf{w}\in\mathcal{W}$ and $\bar{\mathbf{w}}\in\mathcal{W}^\star_c$, we have
\begin{align}
\nabla f(\bar{\mathbf{w}})^T(\mathbf{w}-\bar{\mathbf{w}})\geq0,\nonumber
\end{align}
which together with \EqRef{eq:strongconvexityineq} implies that
\begin{align}
\frac{2}{\mu}(f(\mathbf{w})-f(\bar{\mathbf{w}}))\geq\|X\mathbf{w}-\mathbf{r}^\star\|^2.\label{eq:objgapxwr}
\end{align}
Next we establish the relationship between $(\mathbf{q}^T\mathbf{w}-s^\star)^2$ and $f(\mathbf{w})-f^\star$. We know that $\mathbf{q}=\nabla f(\bar{\mathbf{w}})-X^T\nabla h(\mathbf{r}^\star)$
and $s^\star=\mathbf{q}^T\bar{\mathbf{w}}$ by \LemmaRef{lemma:globalsolution} (in \ApdRef{appendix:auxlemmas}),
we know that
\begin{align}
\mathbf{q}^T\mathbf{w}-s^\star=\mathbf{q}^T(\mathbf{w}-\bar{\mathbf{w}})&=(\nabla f(\bar{\mathbf{w}})-X^T\nabla h(\mathbf{r}^\star))^T(\mathbf{w}-\bar{\mathbf{w}})\nonumber\\
&=\nabla f(\bar{\mathbf{w}})^T(\mathbf{w}-\bar{\mathbf{w}})-\nabla h(\mathbf{r}^\star)^T(X\mathbf{w}-\mathbf{r}^\star),\nonumber
\end{align}
which implies that
\begin{align}
(\mathbf{q}^T\mathbf{w}-s^\star)^2&=(\nabla f(\bar{\mathbf{w}})^T(\mathbf{w}-\bar{\mathbf{w}})-\nabla h(\mathbf{r}^\star)^T(X\mathbf{w}-\mathbf{r}^\star))^2\nonumber\\
&\leq 2(\nabla f(\bar{\mathbf{w}})^T(\mathbf{w}-\bar{\mathbf{w}}))^2 + 2(\nabla h(\mathbf{r}^\star)^T(X\mathbf{w}-\mathbf{r}^\star))^2,\nonumber
\end{align}
which together with
\begin{align}
0\leq\nabla f(\bar{\mathbf{w}})^T(\mathbf{w}-\bar{\mathbf{w}})\leq f(\mathbf{w})-f^\star\nonumber
\end{align}
implies that
\begin{align}
(\mathbf{q}^T\mathbf{w}-s^\star)^2\leq 2(f(\mathbf{w})-f^\star)^2 + 2\|\nabla h(\mathbf{r}^\star)\|^2\|X\mathbf{w}-\mathbf{r}^\star\|^2.\label{eq:qsineq}
\end{align}
Substituting Eqs. (\ref{eq:objgapxwr}), (\ref{eq:qsineq}) into \EqRef{eq:wwbarineq}, we have
\begin{align}
\|\mathbf{w}-\bar{\mathbf{w}}\|^2\leq2\theta^2\left(\frac{1+2\|\nabla h(\mathbf{r}^\star)\|^2}{\mu}(f(\mathbf{w})-f^\star)+(f(\mathbf{w})-f^\star)^2\right),\nonumber
\end{align}
which together with $f(\mathbf{w})-f^\star\leq M$ (\LemmaRef{lemma:boundedobjgap} in \ApdRef{appendix:auxlemmas}) implies that
\begin{align}
\|\mathbf{w}-\bar{\mathbf{w}}\|^2\leq2\theta^2\left(\frac{1+2\|\nabla h(\mathbf{r}^\star)\|^2}{\mu}+M\right)(f(\mathbf{w})-f^\star).\nonumber
\end{align}
This completes the proof of the lemma by considering the definition of $\beta$ in \EqRef{eq:betadef}.
\qed
\end{proof}

\begin{remark}\label{remark:strongineq}
Due to the projection step in \AlgRef{alg:projstocgrad} (Line 10), each iterate belongs to the constraint set $\mathcal{W}$. Moreover, the optimal solution set $\mathcal{W}^\star_c$
is an intersection of the polyhedral set $\mathcal{W}$ and an affine space $\{\mathbf{w}^\star:X\mathbf{w}^\star=\mathbf{r}^\star,\mathbf{q}^T\mathbf{w}^\star=s^\star\}$. The above fact
is critical to prove the SSC inequality for the constrained problem in \EqRef{eq:constrainedprob}. However, for the regularized problem in \EqRef{eq:regularizedprob}, no such property holds.
Thus it is much more challenging to extend the SSC inequality to the regularized problem. Interestingly, we find that
that the problem in \EqRef{eq:regularizedprob} is equivalent to the following constrained problem (the proof is provided in \LemmaRef{lemma:equivalent} in \ApdRef{appendix:auxlemmas}):
\begin{align}
\min_{(\mathbf{w},\varpi)\in\widetilde{\mathcal{W}}}\left\{\widetilde{F}(\mathbf{w},\varpi)=f(\mathbf{w})+\varpi=h(X\mathbf{w})+\varpi\right\},
\text{where}~\widetilde{\mathcal{W}}=\{(\mathbf{w},\varpi): r(\mathbf{w})\leq \varpi\}.\label{eq:equivprob}
\end{align}
Based on the above equivalence and some key results in the proof of \LemmaRef{lemma:strongineq}, we establish an SSC inequality for the regularized problem. Note that we still solve the regularized
problem in \EqRef{eq:regularizedprob} using Prox-SVRG and \EqRef{eq:equivprob} is only used to prove the SSC inequality below.
\end{remark}

\begin{lemma}\label{lemma:regstrongineq}
(SSC inequality for regularized problems) Under assumptions \textbf{A1,~A2} and \textbf{B1}, the problem in \EqRef{eq:regularizedprob} satisfies the SSC inequality with $\beta>0$ defined\footnote{$\beta$ still has the same form as in \EqRef{eq:betadef}, where each variable is accordingly changed from \EqRef{eq:constrainedprob} to \EqRef{eq:equivprob}.} in \EqRef{eq:betadef}.
\end{lemma}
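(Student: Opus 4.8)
The plan is to reduce the regularized problem to the constrained setting through the lifting introduced in \RemarkRef{remark:strongineq} and then invoke \LemmaRef{lemma:strongineq} as a black box. First I would verify that the lifted problem in \EqRef{eq:equivprob} satisfies assumptions \textbf{A1}--\textbf{A3}. Writing the lifted variable as $\mathbf{z}=(\mathbf{w},\varpi)\in\mathbb{R}^{d+1}$, we have $\widetilde{F}(\mathbf{z})=h(\widetilde{X}\mathbf{z})+\widetilde{\mathbf{q}}^T\mathbf{z}$ with $\widetilde{X}=[X~\mathbf{0}]\in\mathbb{R}^{n\times(d+1)}$ and $\widetilde{\mathbf{q}}=(\mathbf{0};1)\in\mathbb{R}^{d+1}$, so it has exactly the form of \EqRef{eq:constrainedprob}. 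Assumption \textbf{A1} holds because $\widetilde{F}(\mathbf{z})=\frac1n\sum_{i=1}^n(f_i(\mathbf{w})+\varpi)$ is an average of convex components whose gradients are Lipschitz (the $\varpi$-component of each component gradient is the constant $1$); assumption \textbf{A2} is inherited verbatim since $h$ is unchanged; and assumption \textbf{A3} holds because $\widetilde{\mathcal{W}}$ is the epigraph of $r$, which is polyhedral by \textbf{B1}, while its optimal solution set is non-empty by \textbf{B1} together with the equivalence in \LemmaRef{lemma:equivalent} (in \ApdRef{appendix:auxlemmas}).

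Next I would apply \LemmaRef{lemma:strongineq} to \EqRef{eq:equivprob}, which produces a constant $\beta>0$ --- of the same form as \EqRef{eq:betadef} with each quantity recomputed for the lifted data $\widetilde{X},\widetilde{\mathbf{q}},\widetilde{\mathcal{W}}$ --- such that for every finite $(\mathbf{w},\varpi)\in\widetilde{\mathcal{W}}$,
\begin{align}
\widetilde{F}(\mathbf{w},\varpi)-\widetilde{F}^\star\geq\frac{\beta}{2}\left\|(\mathbf{w},\varpi)-\Pi_{\widetilde{\mathcal{W}}^\star}(\mathbf{w},\varpi)\right\|^2.\nonumber
\end{align}
To transfer this back to \EqRef{eq:regularizedprob} I would record two facts. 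First, by \LemmaRef{lemma:equivalent}, $\widetilde{F}^\star=F^\star$, and for any $\mathbf{w}\in\mathbb{R}^d$ the point $(\mathbf{w},r(\mathbf{w}))\in\widetilde{\mathcal{W}}$ satisfies $\widetilde{F}(\mathbf{w},r(\mathbf{w}))=f(\mathbf{w})+r(\mathbf{w})=F(\mathbf{w})$. Second, at any optimal $(\mathbf{w}^\star,\varpi^\star)\in\widetilde{\mathcal{W}}^\star$ the epigraph constraint must be tight (otherwise decreasing $\varpi^\star$ strictly lowers the objective while staying feasible), so $\varpi^\star=r(\mathbf{w}^\star)$ with $\mathbf{w}^\star\in\mathcal{W}^\star_r$, and conversely every $\mathbf{w}^\star\in\mathcal{W}^\star_r$ yields $(\mathbf{w}^\star,r(\mathbf{w}^\star))\in\widetilde{\mathcal{W}}^\star$; hence $\widetilde{\mathcal{W}}^\star=\{(\mathbf{w}^\star,r(\mathbf{w}^\star)):\mathbf{w}^\star\in\mathcal{W}^\star_r\}$.

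Finally, fixing $\mathbf{w}\in\mathbb{R}^d$ and letting $(\mathbf{w}^\star,r(\mathbf{w}^\star))=\Pi_{\widetilde{\mathcal{W}}^\star}(\mathbf{w},r(\mathbf{w}))$ with $\mathbf{w}^\star\in\mathcal{W}^\star_r$, I would bound
\begin{align}
\left\|\mathbf{w}-\Pi_{\mathcal{W}^\star_r}(\mathbf{w})\right\|^2\leq\|\mathbf{w}-\mathbf{w}^\star\|^2\leq\|\mathbf{w}-\mathbf{w}^\star\|^2+(r(\mathbf{w})-r(\mathbf{w}^\star))^2=\left\|(\mathbf{w},r(\mathbf{w}))-\Pi_{\widetilde{\mathcal{W}}^\star}(\mathbf{w},r(\mathbf{w}))\right\|^2,\nonumber
\end{align}
where the first inequality uses $\mathbf{w}^\star\in\mathcal{W}^\star_r$ and uniqueness of the projection. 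Chaining the three displays gives $F(\mathbf{w})-F^\star\geq\frac{\beta}{2}\|\mathbf{w}-\Pi_{\mathcal{W}^\star_r}(\mathbf{w})\|^2$, which is the claimed SSC inequality. The main obstacle I anticipate is the first step: one must check that the technical machinery underlying \LemmaRef{lemma:strongineq} --- in particular \LemmaRef{lemma:globalsolution} (uniqueness of the lifted analogues of $\mathbf{r}^\star$ and $s^\star$) and the compactness/boundedness arguments restricted to finite feasible points --- still applies to the lifted, unbounded feasible set $\widetilde{\mathcal{W}}$; once that is established, the remaining steps are a routine translation.
\qed
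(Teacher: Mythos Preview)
Your proposal is correct and follows essentially the same route as the paper: lift to the equivalent constrained problem \EqRef{eq:equivprob}, apply the constrained SSC inequality there, and then drop the extra coordinate to recover the inequality for $F$. The paper's version differs only cosmetically: it quotes the explicit characterization $\widetilde{\mathcal{W}}^\star=\{(\mathbf{w}^\star,\tilde{s}^\star):\mathbf{w}^\star\in\mathcal{W}^\star_r\}$ from \LemmaRef{lemma:equivalent} (rather than re-deriving tightness of the epigraph constraint), computes the lifted projection exactly as $\Pi_{\widetilde{\mathcal{W}}^\star}(\mathbf{w},\varpi)=(\Pi_{\mathcal{W}^\star_r}(\mathbf{w}),\tilde{s}^\star)$, and says ``using a similar proof of \LemmaRef{lemma:strongineq}'' instead of invoking it as a black box --- precisely to sidestep the unbounded-feasible-set concern you flagged as the main obstacle.
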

\begin{proof}
By \LemmaRef{lemma:equivalent} (in \ApdRef{appendix:auxlemmas}), the optimal solution sets to \EqRef{eq:regularizedprob} and \EqRef{eq:equivprob} are
\begin{align}
\mathcal{W}^\star_r &=\{\mathbf{w}^\star:X\mathbf{w}^\star=\tilde{\mathbf{r}}^\star, r(\mathbf{w}^\star)=\tilde{s}^\star\}\label{eq:regoptset}\\
\text{and}~
\widetilde{\mathcal{W}}^\star &=\{(\mathbf{w}^\star,\varpi^\star):X\mathbf{w}^\star=\tilde{\mathbf{r}}^\star,r(\mathbf{w}^\star)\leq\varpi^\star=\tilde{s}^\star\}\label{eq:ineqoptset}\\
&=\{(\mathbf{w}^\star,\varpi^\star):X\mathbf{w}^\star=\tilde{\mathbf{r}}^\star, r(\mathbf{w}^\star)=\varpi^\star=\tilde{s}^\star\},\label{eq:eqoptset}
\end{align}
where $\tilde{\mathbf{r}}^\star$ and $\tilde{s}^\star$ are constants. By Eqs. (\ref{eq:regularizedprob}), (\ref{eq:equivprob}), for any $(\mathbf{w},\varpi)$ satisfying $r(\mathbf{w})=\varpi$, we have
\begin{align}
F(\mathbf{w})-F^\star &=\widetilde{F}(\mathbf{w},\varpi)-\widetilde{F}^\star,~\text{where}~\widetilde{F}^\star~\text{is the optimal objective function value of \EqRef{eq:equivprob}}.\nonumber
\end{align}
Considering Eqs. (\ref{eq:regoptset}), (\ref{eq:eqoptset}) together, we have
\begin{align}
\Pi_{\widetilde{\mathcal{W}}^\star}((\mathbf{w},\varpi))=(\Pi_{\mathcal{W}^\star_r}(\mathbf{w}),\tilde{s}^\star).\nonumber
\end{align}
Notice that $\widetilde{\mathcal{W}}^\star$ in \EqRef{eq:ineqoptset} is an intersection of a polyhedral set
$\{(\mathbf{w}^\star,\varpi^\star):r(\mathbf{w}^\star)\leq\varpi^\star\}$ and an affine space $\{(\mathbf{w}^\star,\varpi^\star):X\mathbf{w}^\star=\tilde{\mathbf{r}}^\star,\varpi^\star=\tilde{s}^\star\}$.
Thus, $(\mathbf{w},\varpi)\in\{(\mathbf{w},\varpi):r(\mathbf{w})\leq\varpi\}$ for any $\mathbf{w}$ with $r(\mathbf{w})=\varpi$.
Using a similar proof of \LemmaRef{lemma:strongineq}, we have for any finite $\mathbf{w}$ satisfying $r(\mathbf{w})=\varpi$:
\begin{align}
F(\mathbf{w})-F^\star &=\widetilde{F}(\mathbf{w},\varpi)-\widetilde{F}^\star\geq\frac{\beta}{2}\left\|(\mathbf{w},\varpi)-\Pi_{\widetilde{\mathcal{W}}^\star}((\mathbf{w},\varpi))\right\|^2\nonumber\\
&=\frac{\beta}{2}\left\|(\mathbf{w},\varpi)-(\Pi_{\mathcal{W}^\star_r}(\mathbf{w}),\tilde{s}^\star)\right\|^2\geq\frac{\beta}{2}\|\mathbf{w}-\Pi_{\mathcal{W}^\star_r}(\mathbf{w})\|^2.\nonumber
\end{align}
This completes the proof of the lemma.
\qed
\end{proof}

\begin{remark}\label{remark:sscineq}
The SSC inequality for both constrained and regularized problems is independent of algorithms. Thus, the SSC inequality may be of independent interest.
In particular, any algorithm whose linear convergence proof depends on $f(\mathbf{w})-f^\star\geq\mu\|\mathbf{w}-\mathbf{w}^\star\|^2$ ($\mu>0$) can potentially be adapted
to solve the non-strongly convex problems in Eqs. (\ref{eq:constrainedprob}), (\ref{eq:regularizedprob}) and achieve a linear convergence rate using the SSC inequality.
\end{remark}

\subsection{Proof Sketch of \ThemRef{theorem:linearconvergence} and \ThemRef{theorem:reglinearconvergence}}
Once we obtain the SSC inequality above, the proofs of \ThemRef{theorem:linearconvergence} and \ThemRef{theorem:reglinearconvergence} can be adapted from \cite{xiao2014proximal} (The key difference is that
we obtain the SSC inequality without the strong convexity). Due to
the space limit, we only provide a proof sketch of \ThemRef{theorem:linearconvergence} and the detailed proofs of both theorems are provided in \ApdRef{appendix:prooftheorems}.
\begin{proof}[Outline of the Proof of \ThemRef{theorem:linearconvergence}]
Let $\bar{\mathbf{w}}^{k}_{t}=\Pi_{\mathcal{W}^\star_c}(\mathbf{w}^k_t)$ for all $k,t\geq0$. Then we have $\bar{\mathbf{w}}^{k}_{t-1}\in\mathcal{W}^\star_c$,
which together with the definition of $\bar{\mathbf{w}}^{k}_{t}$ and $\mathbf{g}^k_{t}=(\mathbf{w}^k_{t-1}-\mathbf{w}^k_{t})/\eta$ implies that
\begin{align}
\left\|\mathbf{w}^k_t-\bar{\mathbf{w}}^{k}_{t}\right\|^2\leq\left\|\mathbf{w}^k_t-\bar{\mathbf{w}}^{k}_{t-1}\right\|^2=\left\|\mathbf{w}^k_{t-1}-\eta\mathbf{g}^k_{t}-\bar{\mathbf{w}}^{k}_{t-1}\right\|^2.\nonumber
\end{align}
Thus, following the proof of Theorem 3.1 in \cite{xiao2014proximal}, we obtain (the detailed proof is in \ApdRef{appendix:prooftheorems})
\begin{align}
&2\eta(1-4L_P\eta)\sum_{t=1}^{m}\Expect{\mathcal{F}^k_m}{f(\mathbf{w}_{t}^k)-f^{\star}}\nonumber\\
\leq & \Expect{\mathcal{F}^{k-1}_m}{\left\|\mathbf{w}^k_0-\bar{\mathbf{w}}^{k}_{0}\right\|^2}
+8L_P\eta^2(m+1)\Expect{\mathcal{F}^{k-1}_m}{f(\tilde{\mathbf{w}}^{k-1})-f^{\star}},\label{eq:objdiffineqmain}
\end{align}
By the convexity of $f(\cdot)$, we have
$f(\tilde{\mathbf{w}}^k)=f\left(\frac{1}{m}\sum_{t=1}^{m}\mathbf{w}_{t}^k\right)\leq\frac{1}{m}\sum_{t=1}^{m}f(\mathbf{w}_{t}^k)$.
Thus, we have
\begin{align}
m\left(f(\tilde{\mathbf{w}}^k)-f^{\star}\right)\leq\sum_{t=1}^{m}\left(f(\mathbf{w}_{t}^k)-f^{\star}\right),\label{eq:avgconvexityineqmain}
\end{align}
Considering \LemmaRef{lemma:strongineq} with bounded $\{\tilde{\mathbf{w}}^{k-1}\}$, $\tilde{\mathbf{w}}^{k-1}=\mathbf{w}^k_{0}\in\mathcal{W}$ and $\bar{\mathbf{w}}^k_{0}=\Pi_{\mathcal{W}^\star_c}(\mathbf{w}^k_{0})$, we have
\begin{align}
f(\tilde{\mathbf{w}}^{k-1})-f^{\star}=f(\mathbf{w}^k_{0})-f^{\star}\geq\frac{\beta}{2}\left\|\mathbf{w}^k_{0}-\bar{\mathbf{w}}^k_{0}\right\|^2,\nonumber
\end{align}
which together with Eqs. (\ref{eq:objdiffineqmain}), (\ref{eq:avgconvexityineqmain}) implies that
\begin{align}
2\eta(1-4L_P\eta)m\Expect{\mathcal{F}^k_m}{f(\tilde{\mathbf{w}}^k)-f^{\star}}
\leq\left(8L_P\eta^2(m+1)+\frac{2}{\beta}\right)\Expect{\mathcal{F}^{k-1}_m}{f(\tilde{\mathbf{w}}^{k-1})-f^{\star}}.\nonumber
\end{align}
Thus, we have
\begin{align}
\Expect{\mathcal{F}^k_m}{f(\tilde{\mathbf{w}}^k)-f^{\star}}\leq\left(\frac{4L_P\eta(m+1)}{(1-4L_P\eta)m}+\frac{1}{\beta\eta(1-4L_P\eta)m}\right)\Expect{\mathcal{F}^{k-1}_m}{f(\tilde{\mathbf{w}}^{k-1})-f^{\star}}.\nonumber
\end{align}
By considering the definition of $\rho$ in \EqRef{eq:linearrate}, we complete the proof of the theorem.
\qed
\end{proof}

\section{Experiments}\label{sec:experiments}
Empirical results in \cite{xiao2014proximal} have shown the effectiveness of Prox-SVRG on solving the regularized problem. Thus, in this section, we evaluate the effectiveness of VRPSG by solving the following $\ell_1$-constrained logistic regression problem:
\begin{align}
\min_{\mathbf{w}\in\mathbb{R}^d}\left\{f(\mathbf{w})=\frac{1}{n}\sum_{i=1}^n\log(1+\exp(-y_i\mathbf{x}_i^T\mathbf{w})),\quad s.t.~\|\mathbf{w}\|_1\leq\tau\right\},\nonumber
\end{align}
where $n$ is the number of samples; $\tau>0$ is the constrained parameter; $\mathbf{x}_i\in\mathbb{R}^d$ is the $i$-th sample; $y_i\in\{1,-1\}$ is the label of the sample $\mathbf{x}_i$.
For the above problem, it is easy to obtain that the convex component is $f_i(\mathbf{w})=\log(1+\exp(-y_i\mathbf{x}_i^T\mathbf{w}))$ and the Lipschitz constant of $\nabla f_i(\mathbf{w})$
is $\|\mathbf{x}_i\|^2/4$.

We conduct experiments on three real-world data sets: classic ($n=7094$, $d=41681$), reviews ($n=4069$, $d=18482$) and sports ($n=8580$, $d=14866$),
which are sparse text data and can be downloaded online\footnote{\scriptsize{\url{http://www.shi-zhong.com/software/docdata.zip}}}.
We conduct comparison by including the following algorithms (more experimental results are provided in the supplementary material due to space limit):
(1) AFG: the accelerated full gradient algorithm proposed in \cite{beck2009fast} with an adaptive line search.
(2) SGD: the stochastic gradient descent algorithm in \EqRef{eq:projstocgrad}. As suggested by \cite{duchi2009efficient}, we set the step size as $\eta_k=\eta_0/\sqrt{k}$, where $\eta_0$ is an initial step size.
(3) VRPSG: the variance-reduced projected stochastic gradient algorithm in this paper.
(4) VRPSG2: a hybrid algorithm by executing SGD for one pass over the data and then switching to the VRPSG algorithm (similar schemes are
also adopted in \cite{johnson2013accelerating,xiao2014proximal}).

\begin{figure*}[!ht]\vspace{-0.2cm}
\begin{minipage}[c]{1.0\linewidth}
\centering
\includegraphics[width=.3\linewidth]{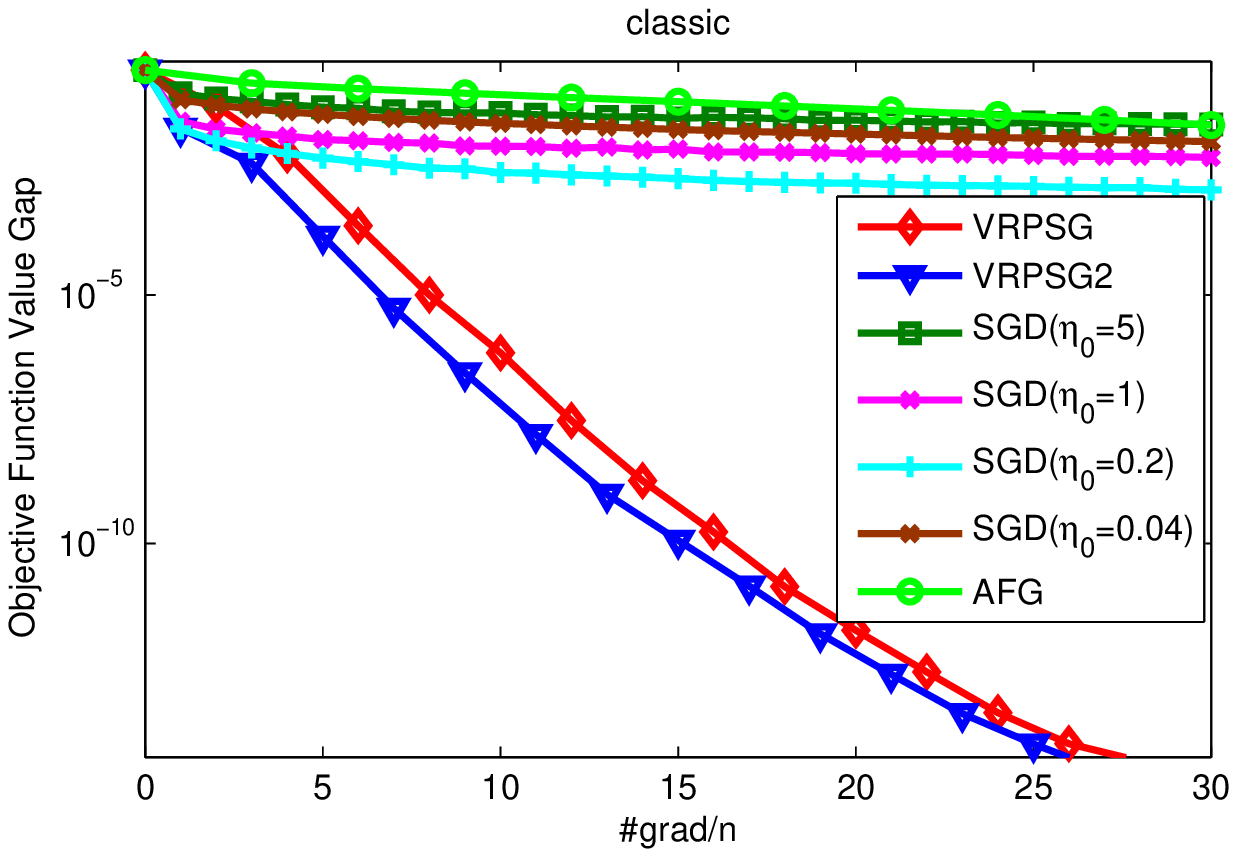}
\includegraphics[width=.3\linewidth]{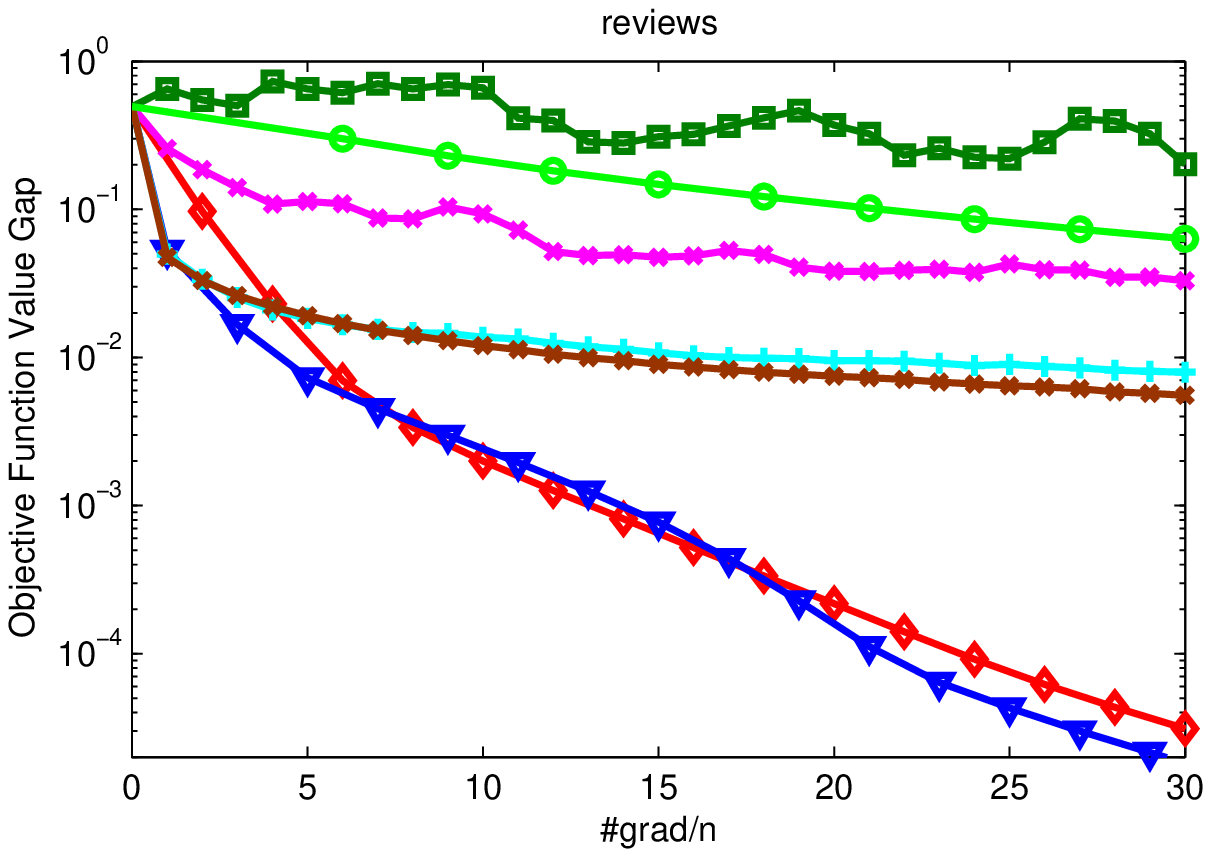}
\includegraphics[width=.3\linewidth]{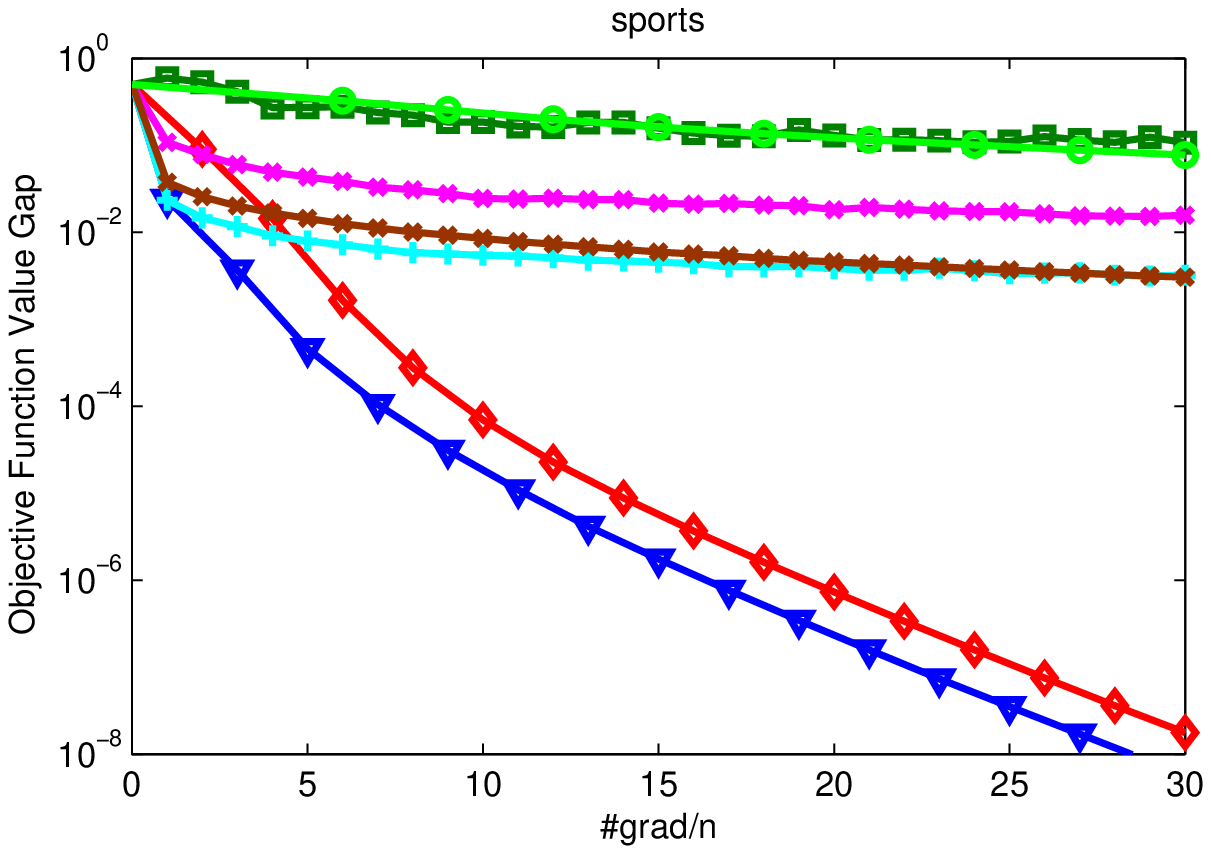}
\end{minipage}
\vskip 0.0cm
\vspace{-0.2cm}\caption{Comparison of different algorithms: the objective function value gap $f(\tilde{\mathbf{w}}^k)-f^\star$ vs. the number of gradient evaluations ($\sharp$grad/n) plots (averaged on 10 runs).
The parameter of VRPSG are set as $\tau=10$, $\eta=1/L_P$, $m=n$, $p_i=L_i/\sum_{i=1}^nL_i$; the step size of SGD is set as $\eta_k=\eta_0/\sqrt{k}$.}
\label{fig:compareobj}\vspace{-0.2cm}
\end{figure*}

Note that SGD is sensitive to the initial step size $\eta_0$ \cite{duchi2009efficient}. To have a fair comparison of different algorithms, we
set different values of $\eta_0$ for SGD to obtain the best performance ($\eta_0=5,1,0.2,0.04$).
To provide an implementation independent result for all algorithms, we report
the objective function value gap $f(\tilde{\mathbf{w}}^k)-f^\star$ vs. the number of gradient evaluations\footnote{Computing the gradient on a single sample counts as one gradient evaluation.}($\sharp$grad/n) plots
in \FigRef{fig:compareobj}, from which we have the following observations:
(a) Both stochastic algorithms (VRPSG and SGD with a proper initial step size) outperform the full gradient algorithm (AFG).
(b) SGD quickly decreases the objective function value in the beginning and gradually slows down in the following iterations.
In contrast, VRPSG decreases the objective function value linearly from the beginning.
This phenomenon is commonly expected due to the sub-linear convergence rate of SGD and the linear convergence rate of VRPSG.
(c) VRPSG2 performs slightly better than VRPSG, which demonstrates that the hybrid scheme can empirically improve
the performance. Similar results are also reported in \cite{johnson2013accelerating,xiao2014proximal}.

\section{Conclusion}\label{sec:conclusions}
In this paper, we study Prox-SVRG and its projected variant VRPSG on efficiently solving a class of non-strongly convex optimization problems
in both constrained and regularized settings.
Our main technical contribution is to establish a linear convergence analysis for both VRPSG and Prox-SVRG without strong convexity. To the best of our knowledge, this is the first linear convergence result for variance-reduced
stochastic gradient algorithms on solving both constrained and regularized problems without the strongly convex condition. In the future work, we will try to develop a more general convergence analysis for a wider range of problems including both non-polyhedral constrained and regularized optimization problems.


\small
\bibliography{nips2015}
\bibliographystyle{abbrv}

\normalsize
\newpage
\begin{center}
\large{\textbf{Supplementary Material for ``Linear Convergence of Variance-Reduced Stochastic Gradient without Strong Convexity''}}
\end{center}

\appendix
In this supplementary material, we first present some remarks for \ThemRef{theorem:linearconvergence} in \ApdRef{appendix:remarktheorem}. Then we present some auxiliary Lemmas in \ApdRef{appendix:auxlemmas}, which will be used in the proofs of linear convergence theorems in \ApdRef{appendix:prooftheorems}. Finally we present more experimental results in \ApdRef{appendix:moreresults}.

\section{Remarks for \ThemRef{theorem:linearconvergence}}\label{appendix:remarktheorem}

We have the following remarks on the convergence result in \ThemRef{theorem:linearconvergence}:
\begin{itemize}
\item Let $\eta=\gamma/L_P$ with $0<\gamma<1/4$. When $m$ is sufficiently large, we have
\begin{align}
\rho\approx\frac{ L_P/\beta}{\gamma(1-4\gamma)m}+\frac{4\gamma}{1-4\gamma},\nonumber
\end{align}
where $L_P/\beta$ can be treated as a \emph{pseudo condition number} of the problem in \EqRef{eq:constrainedprob}.
If we choose $\gamma=0.1$ and $m=100 L_P/\beta$, then $\rho\approx5/6$. Notice that at each outer iteration of \AlgRef{alg:projstocgrad},
$n+2m$ gradient evaluations (computing the gradient on a single sample counts as one gradient evaluation) are required. Thus, to obtain an $\epsilon$-accuracy solution (i.e., $\Expect{\mathcal{F}^k_m}{f(\tilde{\mathbf{w}}^k)-f^\star}\leq\epsilon$),
we need $O(n+L_P/\beta)\log(1/\epsilon)$ gradient evaluations by setting $m=\Theta(L_P/\beta)$. In particular, the complexity becomes
$O(n+L_{\mathrm{avg}}/\beta)\log(1/\epsilon)$ if we choose $p_i=L_i/\sum_{i=1}^nL_i$ for all $i\in\{1,\cdots,n\}$, and $O(n+L_{\max}/\beta )\log(1/\epsilon)$
if we choose $p_i=1/n$ for all $i\in\{1,\cdots,n\}$, where $L_{\mathrm{avg}}=\sum_{i=1}^nL_i/n$ and $L_{\max}=\max_{i\in\{1,\cdots,n\}}L_i$. Notice
that $L_{\mathrm{avg}}\leq L_{\max}$. Thus, sampling in proportion to the Lipschitz constant is better than sampling uniformly.

\item If $f$ is strongly convex with parameter $\tilde{\mu}$ and $p_i=L_i/\sum_{i=1}^nL_i$, VRPSG has the same complexity as Prox-SVRG \cite{xiao2014proximal}, that is,
VRPSG needs $O(n+L_{\mathrm{avg}}/\tilde{\mu})\log(1/\epsilon)$ gradient evaluations
to obtain an $\epsilon$-accuracy solution. In contrast, full gradient methods and standard stochastic gradient algorithms with diminishing step size $\eta_k=1/(\alpha k)$ require $O(nL/\tilde{\mu})\log(1/\epsilon)$
and $O(1/(\alpha \epsilon))$ gradient evaluations
to obtain a solution of the same accuracy.
Obviously, $O(n+L_{\mathrm{avg}}/\tilde{\mu})\log(1/\epsilon)$ and is far superior over $O(nL/\tilde{\mu})\log(1/\epsilon)$ and $O(1/(\alpha \epsilon))$
when the sample size $n$ and the condition number $L/\tilde{\mu}$ are very large.

\item If the Lipschitz constant $L_i$ is unknown and difficult to compute, we can use an upper bound $\hat{L}_i$ instead of $L_i$ to define $L_P=\max_{i\in\{1,\cdots,n\}}[\hat{L}_i/(np_i)]$
and the theorem still holds.

\item We can obtain a convergence rate with high probability. According to Markov's inequality with $f(\tilde{\mathbf{w}}^k)-f^\star\geq0$, \ThemRef{theorem:linearconvergence} implies that
\begin{align}
\Pr(f(\tilde{\mathbf{w}}^k)-f^\star\geq\epsilon)\leq\frac{\Expect{\mathcal{F}^k_m}{f(\tilde{\mathbf{w}}^k)-f^\star}}{\epsilon}\leq\frac{\rho^k(f(\tilde{\mathbf{w}}^0)-f^\star)}{\epsilon}.\nonumber
\end{align}
Therefore, we have $\Pr(f(\tilde{\mathbf{w}}^k)-f^\star\leq\epsilon)\geq1-\delta$, if $k\geq\log\left(\frac{f(\tilde{\mathbf{w}}^0)-f^\star}{\delta\epsilon}\right)/\log(1/\rho)$.
\end{itemize}

\section{Auxiliary Lemmas}\label{appendix:auxlemmas}
\begin{lemma}\label{lemma:lipschitzrelation}
Let $L$ and $L_i$ be the Lipschitz constants of $\nabla f(\mathbf{w})$ and $\nabla f_i(\mathbf{w})$, respectively.
Moreover, let $L_{\mathrm{avg}}=\sum_{i=1}^nL_i/n$, $L_{\max}=\max_{i\in\{1,\cdots,n\}}L_i$ and $L_P=\max_{i\in\{1,\cdots,n\}}[L_i/(np_i)]$ with $p_i\in(0,1),\sum_{i=1}^np_i=1$. Then we have
\begin{align}
L\leq L_{\mathrm{avg}}\leq L_P ~\mathrm{and}~L_{\mathrm{avg}}\leq L_{\max}.\nonumber
\end{align}
\end{lemma}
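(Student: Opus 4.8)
\textbf{Proof proposal for Lemma~\ref{lemma:lipschitzrelation}.}
The plan is to establish the three inequalities $L\leq L_{\mathrm{avg}}$, $L_{\mathrm{avg}}\leq L_P$, and $L_{\mathrm{avg}}\leq L_{\max}$ separately, each from a short and essentially elementary argument. The last two will be easy; the first requires the only genuine idea, namely relating the Lipschitz constant of a sum of gradients to the sum of the individual Lipschitz constants.

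First I would handle $L_{\mathrm{avg}}\leq L_{\max}$. Since $L_{\mathrm{avg}}=\frac{1}{n}\sum_{i=1}^n L_i$ is an average of the numbers $L_i$, and each $L_i\leq L_{\max}$ by definition of the maximum, the average is bounded by $L_{\max}$. Next, for $L_{\mathrm{avg}}\leq L_P$, I would write $L_i = np_i\cdot\frac{L_i}{np_i}\leq np_i\cdot L_P$ for each $i$, using the definition $L_P=\max_i [L_i/(np_i)]$. Summing over $i$ and dividing by $n$ gives $L_{\mathrm{avg}}=\frac{1}{n}\sum_{i=1}^n L_i\leq L_P\cdot\frac{1}{n}\sum_{i=1}^n np_i = L_P\sum_{i=1}^n p_i = L_P$, where the final step uses $\sum_{i=1}^n p_i=1$.

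The main step, and the one I expect to be the only nontrivial obstacle, is $L\leq L_{\mathrm{avg}}$. Here I would use that $\nabla f=\frac{1}{n}\sum_{i=1}^n\nabla f_i$ by assumption \textbf{A1}, so for any $\mathbf{w},\mathbf{w}'$,
\begin{align}
\|\nabla f(\mathbf{w})-\nabla f(\mathbf{w}')\| = \left\|\frac{1}{n}\sum_{i=1}^n(\nabla f_i(\mathbf{w})-\nabla f_i(\mathbf{w}'))\right\|\leq\frac{1}{n}\sum_{i=1}^n\|\nabla f_i(\mathbf{w})-\nabla f_i(\mathbf{w}')\|\leq\frac{1}{n}\sum_{i=1}^n L_i\|\mathbf{w}-\mathbf{w}'\|,\nonumber
\end{align}
using the triangle inequality and then the Lipschitz continuity of each $\nabla f_i$. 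This shows $\nabla f$ is Lipschitz with constant $\frac{1}{n}\sum_{i=1}^n L_i = L_{\mathrm{avg}}$; since $L$ is (by definition) the Lipschitz constant of $\nabla f$, which we take to be the smallest such constant, we conclude $L\leq L_{\mathrm{avg}}$. The one subtlety worth flagging is that this requires interpreting $L$ as the best (smallest) Lipschitz constant of $\nabla f$ rather than an arbitrary one; under that standard convention the chain of inequalities is complete, and combining the three parts yields $L\leq L_{\mathrm{avg}}\leq L_P$ and $L_{\mathrm{avg}}\leq L_{\max}$ as claimed.
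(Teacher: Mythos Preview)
Your proof is correct and follows essentially the same approach as the paper: the $L\leq L_{\mathrm{avg}}$ step via the triangle inequality and minimality of $L$, and the $L_{\mathrm{avg}}\leq L_{\max}$ step, are identical to the paper's. For $L_{\mathrm{avg}}\leq L_P$ the paper dresses the same inequality in dual-norm language (writing $nL_P=\|\mathbf{s}\|_\infty=\sup_{\|\mathbf{t}\|_1\leq 1}\mathbf{t}^T\mathbf{s}\geq\sum_i p_i\cdot L_i/p_i$ with $\mathbf{s}=(L_i/p_i)_i$), which is exactly your direct algebraic bound $L_i\leq np_i L_P$ summed over $i$.
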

\begin{proof}
Based on the definition of Lipschitz continuity, we obtain that $L$ and $L_i$ are the smallest positive
constants such that for all $\mathbf{w},\mathbf{u}\in\mathbb{R}^d$:
\begin{align}
&\|\nabla f(\mathbf{w})-\nabla f(\mathbf{u})\|\leq L\|\mathbf{w}-\mathbf{u}\|, \label{eq:lipschitzineqf}\\
&\|\nabla f_i(\mathbf{w})-\nabla f_i(\mathbf{u})\|\leq L_i\|\mathbf{w}-\mathbf{u}\|.\label{eq:lipschitzineqfi}
\end{align}
Dividing \EqRef{eq:lipschitzineqfi} by $n$ and summing over $i=1,\cdots,n$, we have
\begin{align}
\frac{1}{n}\sum_{i=1}^n\|\nabla f_i(\mathbf{w})-\nabla f_i(\mathbf{u})\|\leq \frac{1}{n}\sum_{i=1}^nL_i\|\mathbf{w}-\mathbf{u}\|.\label{eq:sumlipschitzineq}
\end{align}
Based on the triangle inequality and $\nabla f(\mathbf{w})=\frac{1}{n}\sum_{i=1}^n\nabla f_i(\mathbf{w})$
we have
\begin{align}
\|\nabla f(\mathbf{w})-\nabla f(\mathbf{u})\|\leq \frac{1}{n}\sum_{i=1}^n\|\nabla f_i(\mathbf{w})-\nabla f_i(\mathbf{u})\|,\nonumber
\end{align}
which together with $L_{\mathrm{avg}}=\sum_{i=1}^nL_i/n$ and Eqs. (\ref{eq:lipschitzineqf}), (\ref{eq:sumlipschitzineq}) implies that $L\leq L_{\mathrm{avg}}$.

Define $\mathbf{s}=[L_1/p_1,\cdots,L_n/p_n]^T$. Noticing that
$L_P=\max_{i\in\{1,\cdots,n\}}[L_i/(np_i)]$ with $p_i\in(0,1),\sum_{i=1}^np_i=1$ and considering the definition of the dual norm, we have
\begin{align}
nL_P=\max_{i\in\{1,\cdots,n\}}\frac{L_i}{p_i}=\|\mathbf{s}\|_{\infty}=\sup_{\|\mathbf{t}\|_1\leq1}\mathbf{t}^T\mathbf{s}\geq \sum_{i=1}^np_i\frac{L_i}{p_i},\nonumber
\end{align}
which together with $L_{\mathrm{avg}}=\sum_{i=1}^nL_i/n$ immediately implies that $L_{\mathrm{avg}}\leq L_P$. $L_{\mathrm{avg}}\leq L_{\max}$
is obvious by the definition of $L_{\mathrm{avg}}=\sum_{i=1}^nL_i/n$ and $L_{\max}=\max_{i\in\{1,\cdots,n\}}L_i$.

\qed
\end{proof}

\begin{lemma}\label{lemma:graddiffbound}
Let $\mathbf{w}^\star\in\mathcal{W}^\star_c$ be any optimal solution to the problem in \EqRef{eq:constrainedprob}, $f^\star=f(\mathbf{w}^\star)$ be the optimal objective function value in \EqRef{eq:constrainedprob} and $L_P=\max_{i\in\{1,\cdots,n\}}[L_i/(np_i)]$ with $p_i\in(0,1),\sum_{i=1}^np_i=1$. Then under assumptions \textbf{A1}-\textbf{A3}, for all $\mathbf{w}\in\mathcal{W}$, we have
\begin{align}
\frac{1}{n}\sum_{i=1}^n\frac{1}{np_i}\left\|\nabla f_i(\mathbf{w})-\nabla f_i(\mathbf{w}^\star)\right\|^2\leq2L_P[f(\mathbf{w})-f^\star].\nonumber
\end{align}
\end{lemma}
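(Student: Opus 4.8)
The plan is to bound each term $\|\nabla f_i(\mathbf{w})-\nabla f_i(\mathbf{w}^\star)\|^2$ by the gap of the $i$-th component function, and then average. The standard tool is the co-coercivity inequality for a convex function with $L_i$-Lipschitz gradient: for any convex $g$ with $L_g$-Lipschitz $\nabla g$ and any points $\mathbf{a},\mathbf{b}$,
\begin{align}
g(\mathbf{a}) \geq g(\mathbf{b}) + \nabla g(\mathbf{b})^T(\mathbf{a}-\mathbf{b}) + \frac{1}{2L_g}\|\nabla g(\mathbf{a})-\nabla g(\mathbf{b})\|^2.\nonumber
\end{align}
Since each $f_i$ is convex by assumption~\textbf{A1} and $\nabla f_i$ is $L_i$-Lipschitz, I would apply this with $g = f_i$, $\mathbf{a}=\mathbf{w}$, $\mathbf{b}=\mathbf{w}^\star$, which yields
\begin{align}
\|\nabla f_i(\mathbf{w})-\nabla f_i(\mathbf{w}^\star)\|^2 \leq 2L_i\left(f_i(\mathbf{w}) - f_i(\mathbf{w}^\star) - \nabla f_i(\mathbf{w}^\star)^T(\mathbf{w}-\mathbf{w}^\star)\right).\nonumber
\end{align}

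Next I would divide by $n^2 p_i$ and sum over $i$. On the left this produces exactly $\frac{1}{n}\sum_i \frac{1}{np_i}\|\nabla f_i(\mathbf{w})-\nabla f_i(\mathbf{w}^\star)\|^2$. On the right, using $L_i/(np_i) \leq L_P$ for every $i$ (the definition of $L_P$), each coefficient $\frac{2L_i}{n^2 p_i} = \frac{2}{n}\cdot\frac{L_i}{np_i}$ is at most $\frac{2L_P}{n}$, and each bracket is nonnegative (it is the Bregman divergence of the convex function $f_i$, hence $\geq 0$). Therefore
\begin{align}
\frac{1}{n}\sum_{i=1}^n\frac{1}{np_i}\|\nabla f_i(\mathbf{w})-\nabla f_i(\mathbf{w}^\star)\|^2 \leq \frac{2L_P}{n}\sum_{i=1}^n\left(f_i(\mathbf{w}) - f_i(\mathbf{w}^\star) - \nabla f_i(\mathbf{w}^\star)^T(\mathbf{w}-\mathbf{w}^\star)\right).\nonumber
\end{align}

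Finally I would simplify the right-hand sum. Since $f(\mathbf{w}) = \frac{1}{n}\sum_i f_i(\mathbf{w})$ and $\nabla f(\mathbf{w}^\star) = \frac{1}{n}\sum_i \nabla f_i(\mathbf{w}^\star)$, the sum collapses to $2L_P\left(f(\mathbf{w}) - f^\star - \nabla f(\mathbf{w}^\star)^T(\mathbf{w}-\mathbf{w}^\star)\right)$. Because $\mathbf{w}^\star$ is optimal for \EqRef{eq:constrainedprob} over the convex set $\mathcal{W}$ and $\mathbf{w}\in\mathcal{W}$, the first-order optimality condition gives $\nabla f(\mathbf{w}^\star)^T(\mathbf{w}-\mathbf{w}^\star)\geq 0$, so dropping that term only increases the bound, leaving $2L_P[f(\mathbf{w})-f^\star]$, as claimed. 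The one point requiring a little care — and the only place assumption~\textbf{A3} (or at least feasibility of $\mathbf{w}$ and optimality of $\mathbf{w}^\star$) genuinely enters — is this last step: in the unconstrained case one would simply have $\nabla f(\mathbf{w}^\star)=\mathbf{0}$, but in the constrained setting one must invoke the variational inequality $\nabla f(\mathbf{w}^\star)^T(\mathbf{w}-\mathbf{w}^\star)\geq 0$ for all $\mathbf{w}\in\mathcal{W}$. Everything else is a routine application of co-coercivity, so I do not anticipate a substantive obstacle.
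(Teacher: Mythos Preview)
Your proposal is correct and follows essentially the same route as the paper's proof. The only cosmetic difference is that the paper re-derives the co-coercivity inequality by introducing the auxiliary function $\phi_i(\mathbf{w})=f_i(\mathbf{w})-f_i(\mathbf{w}^\star)-\nabla f_i(\mathbf{w}^\star)^T(\mathbf{w}-\mathbf{w}^\star)$ and minimizing along the gradient direction, whereas you cite it directly; after that, both proofs divide by $n^2p_i$, sum, replace $L_i/(np_i)$ by $L_P$ using nonnegativity of the Bregman terms, and finish with the variational inequality $\nabla f(\mathbf{w}^\star)^T(\mathbf{w}-\mathbf{w}^\star)\geq 0$.
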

\begin{proof}
For any $i\in\{1,\cdots,n\}$, we consider the following function
\begin{align}
\phi_i(\mathbf{w})=f_i(\mathbf{w})-f_i(\mathbf{w}^\star)-\nabla f_i(\mathbf{w}^\star)^T(\mathbf{w}-\mathbf{w}^\star).\nonumber
\end{align}
It follows from the convexity of $\phi_i(\mathbf{w})$ and $\nabla \phi_i(\mathbf{w}^\star)=\mathbf{0}$ that $\min_{\mathbf{w}\in\mathbb{R}^d}\phi_i(\mathbf{w})=\phi_i(\mathbf{w}^\star)=0$.
Recalling that $\nabla \phi_i(\mathbf{w})=\nabla f_i(\mathbf{w})-\nabla f_i(\mathbf{w}^\star)$ is $L_i$-Lipschitz continuous, we have for all $\mathbf{w}\in\mathcal{W}$:
\begin{align}
0=\phi_i(\mathbf{w}^\star)&\leq\min_{\eta\in\mathbb{R}} \phi_i(\mathbf{w}-\eta\nabla \phi_i(\mathbf{w}))\leq\min_{\eta\in\mathbb{R}}\left\{\phi_i(\mathbf{w})-\eta \|\nabla\phi_i(\mathbf{w})\|^2+\frac{L_i\eta^2}{2}\|\nabla\phi_i(\mathbf{w})\|^2\right\}\nonumber\\
&= \phi_i(\mathbf{w})-\frac{1}{2L_i}\|\nabla\phi_i(\mathbf{w})\|^2=\phi_i(\mathbf{w})-\frac{1}{2L_i}\|\nabla f_i(\mathbf{w})-\nabla f_i(\mathbf{w}^\star)\|^2,\nonumber
\end{align}
which implies for all $\mathbf{w}\in\mathcal{W}$:
\begin{align}
\|\nabla f_i(\mathbf{w})-\nabla f_i(\mathbf{w}^\star)\|^2\leq2L_i\phi_i(\mathbf{w})=2L_i(f_i(\mathbf{w})-f_i(\mathbf{w}^\star)-\nabla f_i(\mathbf{w}^\star)^T(\mathbf{w}-\mathbf{w}^\star)).\nonumber
\end{align}
Dividing the above inequality by $n^2p_i$ and summing over $i=1,\cdots,n$, we have
\begin{align}
\frac{1}{n}\sum_{i=1}^n\frac{1}{np_i}\|\nabla f_i(\mathbf{w})-\nabla f_i(\mathbf{w}^\star)\|^2\leq2L_P(f(\mathbf{w})-f(\mathbf{w}^\star)-\nabla f(\mathbf{w}^\star)^T(\mathbf{w}-\mathbf{w}^\star)),\label{eq:graddiffbound}
\end{align}
where we use $L_{\mathrm{avg}}=\sum_{i=1}^nL_i/n\leq L_P=\max_{i\in\{1,\cdots,n\}}[L_i/(np_i)]$ (see \LemmaRef{lemma:lipschitzrelation}) and $f(\mathbf{w})=\frac{1}{n}\sum_{i=1}^nf_i(\mathbf{w})$.
Recalling that $\mathbf{w}^\star\in\mathcal{W}^\star_c$ is an optimal solution to \EqRef{eq:constrainedprob} and $\mathbf{w}\in\mathcal{W}$, it follows from the optimality condition of \EqRef{eq:constrainedprob} that
\begin{align}
\nabla f(\mathbf{w}^\star)^T(\mathbf{w}-\mathbf{w}^\star)\geq0,\nonumber
\end{align}
which together with \EqRef{eq:graddiffbound} and $f^\star=f(\mathbf{w}^\star)$ immediately proves the lemma.
\qed
\end{proof}

\begin{lemma}\label{lemma:globalsolution}
Under assumptions \textbf{A1}-\textbf{A3}, for all $\mathbf{w}^\star\in\mathcal{W}^\star_c$, there exist unique $\mathbf{r}^\star$ and $s^\star$ such that
$X\mathbf{w}^\star=\mathbf{r}^\star$ and $\mathbf{q}^T\mathbf{w}^\star=s^\star$.
Moreover, $\mathcal{W}^\star_c=\{\mathbf{w}^\star: C\mathbf{w}^\star\leq\mathbf{b},~X\mathbf{w}^\star=\mathbf{r}^\star,~\mathbf{q}^T\mathbf{w}^\star=s^\star\}$.
\end{lemma}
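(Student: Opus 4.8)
The plan is to exploit the separable structure $f(\mathbf{w})=h(X\mathbf{w})+\mathbf{q}^T\mathbf{w}$ together with the strict convexity of $h$ that \textbf{A2} supplies. First I would fix two arbitrary optimal solutions $\mathbf{w}^\star_1,\mathbf{w}^\star_2\in\mathcal{W}^\star_c$ and look at the segment $\mathbf{w}_\lambda=\lambda\mathbf{w}^\star_1+(1-\lambda)\mathbf{w}^\star_2$ for $\lambda\in(0,1)$. Since $\mathcal{W}$ is polyhedral (hence convex) by \textbf{A3}, $\mathbf{w}_\lambda\in\mathcal{W}$, and convexity of $f$ gives $f(\mathbf{w}_\lambda)\le\lambda f^\star+(1-\lambda)f^\star=f^\star$, so $\mathbf{w}_\lambda$ is optimal too and $f(\mathbf{w}_\lambda)=f^\star$.

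Next I would expand $f(\mathbf{w}_\lambda)=h(\lambda X\mathbf{w}^\star_1+(1-\lambda)X\mathbf{w}^\star_2)+\lambda\mathbf{q}^T\mathbf{w}^\star_1+(1-\lambda)\mathbf{q}^T\mathbf{w}^\star_2$, using linearity of $\mathbf{w}\mapsto X\mathbf{w}$ and $\mathbf{w}\mapsto\mathbf{q}^T\mathbf{w}$. The points $X\mathbf{w}^\star_1,X\mathbf{w}^\star_2$ lie in $\mathrm{dom}(h)$ because their $h$-values are finite (as $f^\star$ is finite), and since $\mathrm{dom}(h)$ is convex the segment joining them is a convex compact subset of $\mathrm{dom}(h)$, on which $h$ is strongly, hence strictly, convex by \textbf{A2}. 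Chaining $f^\star=f(\mathbf{w}_\lambda)\le\lambda f(\mathbf{w}^\star_1)+(1-\lambda)f(\mathbf{w}^\star_2)=f^\star$ forces equality in the Jensen step applied to $h$, and strict convexity then forces $X\mathbf{w}^\star_1=X\mathbf{w}^\star_2$. Denote this common value by $\mathbf{r}^\star$; subtracting $h(\mathbf{r}^\star)$ from $f(\mathbf{w}^\star_1)=f(\mathbf{w}^\star_2)=f^\star$ gives $\mathbf{q}^T\mathbf{w}^\star_1=\mathbf{q}^T\mathbf{w}^\star_2=:s^\star$. As $\mathbf{w}^\star_1,\mathbf{w}^\star_2$ were arbitrary, $\mathbf{r}^\star$ and $s^\star$ are uniquely determined by $\mathcal{W}^\star_c$.

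Finally I would establish the set identity by double inclusion. The inclusion $\mathcal{W}^\star_c\subseteq\{\mathbf{w}^\star:C\mathbf{w}^\star\le\mathbf{b},\,X\mathbf{w}^\star=\mathbf{r}^\star,\,\mathbf{q}^T\mathbf{w}^\star=s^\star\}$ is immediate from the previous paragraph together with $\mathcal{W}=\{\mathbf{w}:C\mathbf{w}\le\mathbf{b}\}$. Conversely, any $\mathbf{w}$ in the right-hand set is feasible and satisfies $f(\mathbf{w})=h(X\mathbf{w})+\mathbf{q}^T\mathbf{w}=h(\mathbf{r}^\star)+s^\star=f(\mathbf{w}^\star_1)=f^\star$, so $\mathbf{w}\in\mathcal{W}^\star_c$, giving the reverse inclusion.

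The only delicate point I anticipate is the strict-convexity step: \textbf{A2} guarantees strong convexity of $h$ only on convex compact subsets of $\mathrm{dom}(h)$, so I must first justify that the relevant images $X\mathbf{w}^\star$ really lie inside $\mathrm{dom}(h)$ (finiteness of $f^\star$ plus convexity of $\mathrm{dom}(h)$) and then carefully turn the equality case in Jensen's inequality into the pointwise identity $X\mathbf{w}^\star_1=X\mathbf{w}^\star_2$; the rest is routine convexity bookkeeping.
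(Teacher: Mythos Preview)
Your proposal is correct and follows essentially the same route as the paper: take two optimal solutions, use convexity of $\mathcal{W}$ (and of the optimal set) to see that their midpoint is optimal, invoke strict convexity of $h$ on the relevant compact subset to force $X\mathbf{w}^\star_1=X\mathbf{w}^\star_2$, deduce $\mathbf{q}^T\mathbf{w}^\star_1=\mathbf{q}^T\mathbf{w}^\star_2$ from the common optimal value, and finish with the same double-inclusion argument. The only cosmetic difference is that the paper phrases the strict-convexity step as a contradiction at $\lambda=1/2$ rather than as the equality case of Jensen's inequality along the whole segment.
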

\begin{proof}
By assumption \textbf{A1}, we know that $\mathcal{W}^\star_c$ is not empty. We first prove that there exists a unique $\mathbf{r}^\star$ such that
$X\mathbf{w}^\star=\mathbf{r}^\star$ by contradiction.
Assume that there are $\mathbf{w}^\star_1,\mathbf{w}^\star_2\in\mathcal{W}^\star_c$ such that $X\mathbf{w}^\star_1\neq X\mathbf{w}^\star_2$.
Then, the optimal objective function value is $f^\star=h(X\mathbf{w}^\star_1)+\mathbf{q}^T\mathbf{w}^\star_1=h(X\mathbf{w}^\star_2)+\mathbf{q}^T\mathbf{w}^\star_2$.
Due to $\mathbf{w}^\star_1,\mathbf{w}^\star_2\in\mathcal{W}^\star_c$ and the convexity of $\mathcal{W}^\star_c$, we have $(\mathbf{w}^\star_1+\mathbf{w}^\star_2)/2\in\mathcal{W}^\star_c$. Therefore,
\begin{align}
f^\star=h\left(\frac{1}{2}X\mathbf{w}^\star_1+\frac{1}{2}X\mathbf{w}^\star_2\right)+\frac{1}{2}\mathbf{q}^T(\mathbf{w}^\star_1+\mathbf{w}^\star_2).\label{eq:fstareqv}
\end{align}
On the other hand, the strong convexity of $h(\cdot)$ implies that
\begin{align}
h\left(\frac{1}{2}X\mathbf{w}^\star_1+\frac{1}{2}X\mathbf{w}^\star_2\right)
<\frac{1}{2}h(X\mathbf{w}^\star_1)+\frac{1}{2}h(X\mathbf{w}^\star_2),\nonumber
\end{align}
which together with \EqRef{eq:fstareqv} implies that
\begin{align}
f^\star<\frac{1}{2}h(X\mathbf{w}^\star_1)+\frac{1}{2}h(X\mathbf{w}^\star_2)+\frac{1}{2}\mathbf{q}^T(\mathbf{w}^\star_1+\mathbf{w}^\star_2)=f^\star,\nonumber
\end{align}
leading to a contradiction. Thus, there exists a unique $\mathbf{r}^\star$ such that for all $\mathbf{w}^\star\in\mathcal{W}^\star_c$, $X\mathbf{w}^\star=\mathbf{r}^\star$.
The uniqueness of $\mathbf{r}^\star$ and $f^\star$ immediately implies that there exists a unique $s^\star$ such that $\mathbf{q}^T\mathbf{w}^\star=s^\star$.

If $\mathbf{w}^\star\in\mathcal{W}^\star_c$, then $\mathbf{w}^\star\in\mathcal{W}$, $X\mathbf{w}^\star=\mathbf{r}^\star$ and $\mathbf{q}^T\mathbf{w}^\star=s^\star$, that is,
$\mathbf{w}^\star\in\{\mathbf{w}^\star: C\mathbf{w}^\star\leq\mathbf{b},~X\mathbf{w}^\star=\mathbf{r}^\star,~\mathbf{q}^T\mathbf{w}^\star=s^\star\}$ and hence
$\mathcal{W}^\star_c\subseteq\{\mathbf{w}^\star: C\mathbf{w}^\star\leq\mathbf{b},~X\mathbf{w}^\star=\mathbf{r}^\star,~\mathbf{q}^T\mathbf{w}^\star=s^\star\}$. If $\mathbf{w}^\star\in\{\mathbf{w}^\star: C\mathbf{w}^\star\leq\mathbf{b},~X\mathbf{w}^\star=\mathbf{r}^\star,~\mathbf{q}^T\mathbf{w}^\star=s^\star\}$, then
$\mathbf{w}^\star$ is a feasible solution and $f(\mathbf{w}^\star)=h(X\mathbf{w}^\star)+\mathbf{q}^T\mathbf{w}^\star=f^\star$,
that is, $\mathbf{w}^\star\in\mathcal{W}^\star_c$ and hence $\{\mathbf{w}^\star: C\mathbf{w}^\star\leq\mathbf{b},~X\mathbf{w}^\star=\mathbf{r}^\star,~\mathbf{q}^T\mathbf{w}^\star=s^\star\}\subseteq\mathcal{W}^\star_c$. Therefore, we have
$\mathcal{W}^\star_c=\{\mathbf{w}^\star: C\mathbf{w}^\star\leq\mathbf{b},~X\mathbf{w}^\star=\mathbf{r}^\star,~\mathbf{q}^T\mathbf{w}^\star=s^\star\}$.
\qed
\end{proof}

\begin{lemma}\label{lemma:boundedobjgap}
Let $\mathbf{w}\in\mathcal{W}=\{\mathbf{w}: X\mathbf{w}\leq\mathbf{b}\}$ and $f^\star$ be the optimal objective function value in \EqRef{eq:constrainedprob}.
Then under assumptions \textbf{A1}-\textbf{A3}, for any finite $\mathbf{w}$, there exists a constant $M>0$ such that
\begin{align}
f(\mathbf{w})-f^\star\leq M.\nonumber
\end{align}
\end{lemma}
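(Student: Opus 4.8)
The plan is to exploit the fact that, on (bounded subsets of) the feasible region, $f$ is a continuous function and is therefore bounded above, after which the conclusion is just the extreme value theorem. First I would fix a compact set containing the points of interest: by \textbf{A3} the polyhedral set $\mathcal{W}$ is compact (as already invoked in the proof of \LemmaRef{lemma:strongineq}), so one may take $\mathcal{V}=\mathcal{W}$; if instead one only wants the bound on a bounded region of finite points (which is all the convergence analysis needs, since $\{\mathbf{w}^k_t\}$ is assumed bounded), replace $\mathcal{W}$ by the closure of that region intersected with $\mathcal{W}$, which is again compact. Call this compact set $\mathcal{V}\subseteq\mathcal{W}$.

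The key step is to control $h(X\mathbf{w})$ uniformly over $\mathcal{V}$, and this is the only place where \textbf{A1}--\textbf{A3} are genuinely used rather than mere continuity of $f$. By the remark following \textbf{A3}, the image $\{X\mathbf{w}:\mathbf{w}\in\mathcal{V}\}$ is contained in a convex compact subset $\mathcal{U}\subseteq\mathrm{dom}(h)$. Since $h$ is continuously differentiable — hence continuous — on the open set $\mathrm{dom}(h)$ by \textbf{A2}, the restriction of $h$ to the compact set $\mathcal{U}$ attains a finite maximum, say $h(\mathbf{u})\leq H$ for all $\mathbf{u}\in\mathcal{U}$. Separately, the linear term is bounded above on the bounded set $\mathcal{V}$ by $Q:=\|\mathbf{q}\|\sup_{\mathbf{w}\in\mathcal{V}}\|\mathbf{w}\|<\infty$. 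Adding these, $f(\mathbf{w})=h(X\mathbf{w})+\mathbf{q}^T\mathbf{w}\leq H+Q$ for every $\mathbf{w}\in\mathcal{V}$, and therefore $M:=H+Q-f^\star+1$ (a finite, strictly positive constant, since $f(\mathbf{w})\geq f^\star$ already forces $H+Q\geq f^\star$) satisfies $f(\mathbf{w})-f^\star\leq M$ on $\mathcal{V}$, as claimed.

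The main obstacle is the inclusion ``$X\mathcal{V}$ lies inside a compact subset of $\mathrm{dom}(h)$''. This is nontrivial precisely because \textbf{A2} only guarantees that $\mathrm{dom}(h)$ is open, so $h$ could blow up as one approaches its boundary, and a naive ``$f$ is continuous on the closed set $\mathcal{W}$'' argument would be circular. Compactness of $\mathcal{V}$ together with the remark after \textbf{A3} (equivalently: $\mathrm{dom}(h)$ is convex as the domain of a convex function, so the closed convex hull of $X\mathcal{V}$ is a compact subset of $\mathrm{dom}(h)$) removes this difficulty, and the extreme value theorem then finishes the argument. Everything else is routine and I do not anticipate further complications.
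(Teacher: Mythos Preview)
Your argument is correct but takes a different route from the paper. You bound $f$ directly via the extreme value theorem: restrict to a compact $\mathcal{V}\subseteq\mathcal{W}$, push it through the linear map $X$ to a compact subset of $\mathrm{dom}(h)$, and use continuity of $h$ (from \textbf{A2}) and of the linear term to get a finite supremum. The paper instead uses the descent lemma coming from \textbf{A1}: it writes
\[
f(\mathbf{w})-f^\star\leq \nabla f(\mathbf{w}^\star)^T(\mathbf{w}-\mathbf{w}^\star)+\tfrac{L}{2}\|\mathbf{w}-\mathbf{w}^\star\|^2,
\]
observes via \LemmaRef{lemma:globalsolution} that $\nabla f(\mathbf{w}^\star)=X^T\nabla h(\mathbf{r}^\star)+\mathbf{q}$ is a fixed vector independent of the choice of optimizer, and then bounds the right-hand side by Cauchy--Schwarz using only that $\mathbf{w}$ and $\mathbf{w}^\star$ are finite. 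The paper's version therefore yields a semi-explicit $M$ in terms of $L$, $\|X^T\nabla h(\mathbf{r}^\star)+\mathbf{q}\|$, and $\|\mathbf{w}-\mathbf{w}^\star\|$, and it uses the Lipschitz structure of \textbf{A1} and the invariance lemma rather than compactness of the image in $\mathrm{dom}(h)$. Your version is more elementary---it needs neither the Lipschitz constant nor \LemmaRef{lemma:globalsolution}---but the bound it produces is non-constructive. Both arguments ultimately rest on the same implicit premise (also used in the remark after \textbf{A3} and in \LemmaRef{lemma:strongineq}) that $X\mathbf{w}\in\mathrm{dom}(h)$ for the feasible points under consideration, so neither has an advantage on that front.
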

\begin{proof}
For any $\mathbf{w}^\star\in\mathcal{W}^\star_c$, we have $f^\star=f(\mathbf{w}^\star)$. Recalling that $\nabla f(\mathbf{w})$ is Lipschitz continuous with constant $L$, we have
\begin{align}
f(\mathbf{w})-f^\star\leq \nabla f(\mathbf{w}^\star)^T(\mathbf{w}-\mathbf{w}^\star) + \frac{L}{2}\|\mathbf{w}-\mathbf{w}^\star\|^2,\nonumber
\end{align}
which together with \LemmaRef{lemma:globalsolution} implies that there exists a constant vector $\mathbf{r}^\star$ such that
\begin{align}
f(\mathbf{w})-f^\star &\leq (X^T\nabla h(\mathbf{r}^\star)+\mathbf{q})^T(\mathbf{w}-\mathbf{w}^\star) + \frac{L}{2}\|\mathbf{w}-\mathbf{w}^\star\|^2\nonumber\\
&\leq\|X^T\nabla h(\mathbf{r}^\star)+\mathbf{q}\|\|\mathbf{w}-\mathbf{w}^\star\| + \frac{L}{2}\|\mathbf{w}-\mathbf{w}^\star\|^2,\nonumber
\end{align}
Recall that $\|X^T\nabla h(\mathbf{r}^\star)+\mathbf{q}\|$ is constant and both $\mathbf{w}$ and $\mathbf{w}^\star$ are finite. Thus,
the right-hand-side of the above inequality must be upper bounded by a positive constant $M$. This completes the proof of the lemma.
\qed
\end{proof}

\begin{lemma}\label{lemma:hoffmanbound}
(Hoffman's bound, Lemma 4.3 \cite{wang2014iteration}) Let $\mathcal{V}=\{\mathbf{w}: C\mathbf{w}\leq\mathbf{b},~X\mathbf{w}=\mathbf{r}\}$
be a non-empty polyhedron. Then for any $\mathbf{w}\in\mathbb{R}^d$, there exist a feasible point $\mathbf{w}^\star$ of $\mathcal{V}$
and a constant $\theta>0$ such that
\begin{align}
\|\mathbf{w}-\mathbf{w}^\star\|\leq\theta\left\|
\begin{array}{c}
[C\mathbf{w}-\mathbf{b}]^+\\
X\mathbf{w}-\mathbf{r}
\end{array}
\right\|,\nonumber
\end{align}
where $[C\mathbf{w}-\mathbf{b}]^+$ denotes the Euclidean projection of $C\mathbf{w}-\mathbf{b}$ onto
the non-negative orthant and $\theta$ is the Hoffman constant defined by
\begin{align}
\theta=\sup_{\mathbf{u},\mathbf{v}}\left\{\left\|
\begin{array}{c}
\mathbf{u}\\
\mathbf{v}
\end{array}
\right\|:
\begin{array}{l}
\|C^T\mathbf{u}+X^T\mathbf{v}\|=1,~\mathbf{u}\geq0\\
the~rows~of~C~and~X~corresponding~to~the~nonzero\\
components~of~\mathbf{u}~and~\mathbf{v}~are~linearly~independent\\
\end{array}
\right\}<\infty.\nonumber
\end{align}
\end{lemma}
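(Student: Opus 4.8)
The plan is to realize $\mathbf{w}^\star$ as the Euclidean projection $\Pi_{\mathcal{V}}(\mathbf{w})$ of $\mathbf{w}$ onto the non-empty closed convex polyhedron $\mathcal{V}=\{\mathbf{z}:C\mathbf{z}\le\mathbf{b},~X\mathbf{z}=\mathbf{r}\}$, which exists and is unique. First I would write the KKT conditions for the projection problem $\min_{\mathbf{z}}\tfrac12\|\mathbf{w}-\mathbf{z}\|^2$ subject to $C\mathbf{z}\le\mathbf{b},~X\mathbf{z}=\mathbf{r}$: there exist multipliers $\mathbf{u}\ge\mathbf{0}$ and $\mathbf{v}$ with $\mathbf{w}-\mathbf{w}^\star=C^T\mathbf{u}+X^T\mathbf{v}$ and complementary slackness $u_i=0$ whenever $(C\mathbf{w}^\star)_i<b_i$. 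If $\mathbf{w}\in\mathcal{V}$ then $\mathbf{w}^\star=\mathbf{w}$ and the bound is trivial, so assume $\mathbf{w}\notin\mathcal{V}$.

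Next I would compute $\|\mathbf{w}-\mathbf{w}^\star\|^2=(C^T\mathbf{u}+X^T\mathbf{v})^T(\mathbf{w}-\mathbf{w}^\star)=\mathbf{u}^T(C\mathbf{w}-C\mathbf{w}^\star)+\mathbf{v}^T(X\mathbf{w}-\mathbf{r})$, using $X\mathbf{w}^\star=\mathbf{r}$. By complementary slackness $\mathbf{u}^TC\mathbf{w}^\star=\mathbf{u}^T\mathbf{b}$, and since $\mathbf{u}\ge\mathbf{0}$ we get $\mathbf{u}^T(C\mathbf{w}-C\mathbf{w}^\star)=\mathbf{u}^T(C\mathbf{w}-\mathbf{b})\le\mathbf{u}^T[C\mathbf{w}-\mathbf{b}]^+$. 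Cauchy--Schwarz then yields $\|\mathbf{w}-\mathbf{w}^\star\|^2\le\big\|(\mathbf{u};\mathbf{v})\big\|\cdot\big\|([C\mathbf{w}-\mathbf{b}]^+;X\mathbf{w}-\mathbf{r})\big\|$.

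The real work is in reducing to a \emph{basic} multiplier pair. Among all $(\mathbf{u},\mathbf{v})$ with $\mathbf{u}\ge\mathbf{0}$ satisfying $\mathbf{w}-\mathbf{w}^\star=C^T\mathbf{u}+X^T\mathbf{v}$, a Carath\'eodory / fundamental-theorem-of-linear-inequalities argument shows one can select one for which the rows of $C$ indexed by $\mathrm{supp}(\mathbf{u})$ together with the rows of $X$ (indexed by $\mathrm{supp}(\mathbf{v})$) are linearly independent; this is exactly the index structure over which the supremum defining $\theta$ is taken. For such a pair, $\mathbf{w}-\mathbf{w}^\star\ne\mathbf{0}$ forces $C^T\mathbf{u}+X^T\mathbf{v}\ne\mathbf{0}$, so normalizing and applying the definition of $\theta$ gives $\|(\mathbf{u};\mathbf{v})\|\le\theta\|C^T\mathbf{u}+X^T\mathbf{v}\|=\theta\|\mathbf{w}-\mathbf{w}^\star\|$. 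Substituting into the previous display and cancelling one factor of $\|\mathbf{w}-\mathbf{w}^\star\|$ gives $\|\mathbf{w}-\mathbf{w}^\star\|\le\theta\big\|([C\mathbf{w}-\mathbf{b}]^+;X\mathbf{w}-\mathbf{r})\big\|$.

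Finally I would confirm $\theta<\infty$: there are only finitely many candidate index subsets, and for each one inducing linearly independent rows the map $(\mathbf{u},\mathbf{v})\mapsto C^T\mathbf{u}+X^T\mathbf{v}$ restricted to that support is injective, hence bounded below on the unit sphere by compactness; $\theta$ is the finite maximum of the reciprocals of these lower bounds. The main obstacle is the Carath\'eodory reduction in the third step: carefully showing that an arbitrary nonnegative representation $\mathbf{w}-\mathbf{w}^\star=C^T\mathbf{u}+X^T\mathbf{v}$ can be replaced by one supported on a linearly independent set of rows without changing the vector $C^T\mathbf{u}+X^T\mathbf{v}$ and while preserving $\mathbf{u}\ge\mathbf{0}$ — this is the step that makes the otherwise crude Cauchy--Schwarz bound give a constant $\theta$ independent of $\mathbf{w}$.
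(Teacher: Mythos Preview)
The paper does not prove this lemma: it is quoted from \cite{wang2014iteration} (Lemma~4.3 there) and used as a black box, so there is no in-paper argument to compare against. Your outline is the classical projection/KKT proof of Hoffman's bound and is sound in its overall structure; the finiteness argument for $\theta$ is also correct.

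There is one point you should tighten. In your second paragraph the Cauchy--Schwarz estimate relies on complementary slackness of the KKT multipliers, since that is what converts $\mathbf{u}^T(C\mathbf{w}-C\mathbf{w}^\star)$ into $\mathbf{u}^T(C\mathbf{w}-\mathbf{b})$; without it one only has $C\mathbf{w}^\star\le\mathbf{b}$, which gives the wrong inequality direction. In your third paragraph, however, you phrase the reduction as ``among \emph{all} $(\mathbf{u},\mathbf{v})$ with $\mathbf{u}\ge\mathbf{0}$ satisfying $\mathbf{w}-\mathbf{w}^\star=C^T\mathbf{u}+X^T\mathbf{v}$, select a basic one'', and then substitute its norm bound back into the earlier display. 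An arbitrary basic representation need not satisfy complementary slackness, so the second-paragraph bound does not automatically hold for it. The fix is standard and easy: perform the Carath\'eodory reduction \emph{starting from the KKT pair}. That procedure only zeroes out coefficients, so $\mathrm{supp}(\mathbf{u}')\subseteq\mathrm{supp}(\mathbf{u})$ and complementary slackness is inherited; the reduced pair then simultaneously satisfies the residual estimate and lies in the feasible set of the supremum defining~$\theta$. Once you say this explicitly (you only mention preserving $\mathbf{u}\ge\mathbf{0}$, not complementary slackness), the argument is complete.
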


\begin{remark}\label{remark:hoffmanbound}
Let $\mathcal{D}$ be a set including all matrices formed by the linearly independent columns of the matrix $[C^T,X^T]$. Then for all $D\in\mathcal{D}$,
$D^TD$ is invertible and we have
\begin{align}
0<\theta\leq\max_{D\in\mathcal{D}}\sigma_{\max}((D^TD)^{-1}D^T),~\text{where}~\sigma_{\max}(\cdot)~\text{denotes the maximum singular value}.\nonumber
\end{align}
\end{remark}

\begin{lemma}\label{lemma:equivalent}
(a) \EqRef{eq:regularizedprob} is equivalent to \EqRef{eq:equivprob}. Specifically, if $\mathbf{w}^\star$ is an optimal solution to \EqRef{eq:regularizedprob},
then $(\mathbf{w}^\star,r(\mathbf{w}^\star))$ must be an optimal solution to \EqRef{eq:equivprob}. If
$(\mathbf{w}^\star,\varpi^\star)$ is an optimal solution to \EqRef{eq:equivprob}, then $\varpi^\star=r(\mathbf{w}^\star)$ must hold and
$\mathbf{w}^\star$ must be an optimal solution to \EqRef{eq:regularizedprob}. (b) There exist constant $\tilde{\mathbf{r}}^\star$ and $\tilde{s}^\star$ such that the optimal solution sets to \EqRef{eq:regularizedprob} and \EqRef{eq:equivprob} are
$\mathcal{W}^\star_r=\{\mathbf{w}^\star:X\mathbf{w}^\star=\tilde{\mathbf{r}}^\star, r(\mathbf{w}^\star)=\tilde{s}^\star\}$ and
$\widetilde{\mathcal{W}}^\star=\{(\mathbf{w}^\star,\varpi^\star):X\mathbf{w}^\star=\tilde{\mathbf{r}}^\star, r(\mathbf{w}^\star)\leq\varpi^\star=\tilde{s}^\star\}=\{(\mathbf{w}^\star,\varpi^\star):X\mathbf{w}^\star=\tilde{\mathbf{r}}^\star, r(\mathbf{w}^\star)=\varpi^\star=\tilde{s}^\star\}$.
\end{lemma}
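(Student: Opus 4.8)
The plan is to establish part (a) by a direct comparison of the feasible points and objective values of the two problems, and then to deduce part (b) by observing that \EqRef{eq:equivprob} is itself an instance of the constrained problem \EqRef{eq:constrainedprob}, so that \LemmaRef{lemma:globalsolution} applies to it essentially verbatim.

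For part (a), I would first note that for every feasible $(\mathbf{w},\varpi)\in\widetilde{\mathcal{W}}$ we have $\varpi\ge r(\mathbf{w})$, hence $\widetilde{F}(\mathbf{w},\varpi)=h(X\mathbf{w})+\varpi\ge h(X\mathbf{w})+r(\mathbf{w})=F(\mathbf{w})\ge F^\star$, so $\widetilde{F}^\star\ge F^\star$. If $\mathbf{w}^\star$ solves \EqRef{eq:regularizedprob}, then $(\mathbf{w}^\star,r(\mathbf{w}^\star))$ is feasible for \EqRef{eq:equivprob} with value $\widetilde{F}(\mathbf{w}^\star,r(\mathbf{w}^\star))=F(\mathbf{w}^\star)=F^\star$, so it is optimal and $\widetilde{F}^\star=F^\star$. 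Conversely, let $(\mathbf{w}^\star,\varpi^\star)$ solve \EqRef{eq:equivprob}; if $\varpi^\star>r(\mathbf{w}^\star)$, then $(\mathbf{w}^\star,r(\mathbf{w}^\star))$ would be feasible with a strictly smaller objective value, contradicting optimality, so $\varpi^\star=r(\mathbf{w}^\star)$, and then $F(\mathbf{w}^\star)=\widetilde{F}(\mathbf{w}^\star,\varpi^\star)=\widetilde{F}^\star=F^\star$, i.e., $\mathbf{w}^\star$ solves \EqRef{eq:regularizedprob}.

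For part (b), I would recast \EqRef{eq:equivprob} in the variable $\mathbf{z}=(\mathbf{w},\varpi)\in\mathbb{R}^{d+1}$ as a problem of the form \EqRef{eq:constrainedprob}: take $\widetilde{X}=[X,\mathbf{0}]$ so that $\widetilde{X}\mathbf{z}=X\mathbf{w}$, the linear term $\widetilde{\mathbf{q}}=(\mathbf{0},1)$ so that $\widetilde{\mathbf{q}}^T\mathbf{z}=\varpi$, the convex components $\tilde{f}_i(\mathbf{z})=f_i(\mathbf{w})+\varpi$ (whose gradients $(\nabla f_i(\mathbf{w}),1)$ are Lipschitz with the same constants $L_i$), and the constraint set $\widetilde{\mathcal{W}}=\{\mathbf{z}:r(\mathbf{w})\le\varpi\}$, which is the epigraph of $r$ and hence polyhedral by assumption \textbf{B1}; its optimal set $\widetilde{\mathcal{W}}^\star$ is non-empty since $\mathcal{W}^\star_r\neq\emptyset$ by part (a). Thus assumptions \textbf{A1}--\textbf{A3} hold for \EqRef{eq:equivprob}, and \LemmaRef{lemma:globalsolution} supplies unique constants $\tilde{\mathbf{r}}^\star,\tilde{s}^\star$ with $X\mathbf{w}^\star=\tilde{\mathbf{r}}^\star$, $\varpi^\star=\tilde{s}^\star$ on $\widetilde{\mathcal{W}}^\star$, and $\widetilde{\mathcal{W}}^\star=\{(\mathbf{w}^\star,\varpi^\star):r(\mathbf{w}^\star)\le\varpi^\star,\,X\mathbf{w}^\star=\tilde{\mathbf{r}}^\star,\,\varpi^\star=\tilde{s}^\star\}$, which is the first claimed form. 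Substituting the identity $\varpi^\star=r(\mathbf{w}^\star)$ from part (a) turns $r(\mathbf{w}^\star)\le\varpi^\star=\tilde{s}^\star$ into $r(\mathbf{w}^\star)=\varpi^\star=\tilde{s}^\star$, giving the second form. Finally, part (a) yields $\mathbf{w}^\star\in\mathcal{W}^\star_r\iff(\mathbf{w}^\star,r(\mathbf{w}^\star))\in\widetilde{\mathcal{W}}^\star\iff\big(X\mathbf{w}^\star=\tilde{\mathbf{r}}^\star\text{ and }r(\mathbf{w}^\star)=\tilde{s}^\star\big)$, so $\mathcal{W}^\star_r=\{\mathbf{w}^\star:X\mathbf{w}^\star=\tilde{\mathbf{r}}^\star,\,r(\mathbf{w}^\star)=\tilde{s}^\star\}$.

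The only genuinely delicate point, and the one I would verify most carefully, is the reduction in part (b): one must check that the epigraph $\{(\mathbf{w},\varpi):r(\mathbf{w})\le\varpi\}$ is literally a polyhedral constraint set of the kind demanded by \textbf{A3}, and that \textbf{A1} and \textbf{A2} carry over unchanged to the lifted problem — which they do, since $h$ is untouched and the new coordinate $\varpi$ enters only through a linear term with a (trivially) Lipschitz gradient. Everything else is elementary bookkeeping of objective values and feasible sets, with no analytic estimates required.
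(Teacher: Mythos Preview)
Your proposal is correct. The overall structure matches the paper's, but the execution differs in a couple of noteworthy ways.

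For part (a), the paper argues via first-order optimality conditions: it writes down the variational inequality characterizing minimizers of \EqRef{eq:regularizedprob} and of \EqRef{eq:equivprob} and shows each implies the other by specializing $\varpi=r(\mathbf{w})$ and $\varpi^\star=r(\mathbf{w}^\star)$. Your argument instead compares objective values directly, using only that $\varpi\ge r(\mathbf{w})$ forces $\widetilde{F}(\mathbf{w},\varpi)\ge F(\mathbf{w})$; this is shorter and avoids any appeal to differentiability or convexity of $f$ in part (a). Both routes use the same contradiction to force $\varpi^\star=r(\mathbf{w}^\star)$.

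For part (b), the paper says only that one can ``use a similar argument of \LemmaRef{lemma:globalsolution}'' to get constants $\tilde{\mathbf{r}}^\star_1,\tilde{s}^\star_1$ for \EqRef{eq:regularizedprob} and $\tilde{\mathbf{r}}^\star_2,\tilde{s}^\star_2$ for \EqRef{eq:equivprob} separately, and then invokes part (a) to identify them. Your route is tighter: you explicitly exhibit \EqRef{eq:equivprob} as an instance of \EqRef{eq:constrainedprob} in the lifted variable $\mathbf{z}=(\mathbf{w},\varpi)$, verify \textbf{A1}--\textbf{A3} (the only nontrivial point being that the epigraph of $r$ is polyhedral, which is exactly \textbf{B1}), and then apply \LemmaRef{lemma:globalsolution} once. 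This yields the description of $\widetilde{\mathcal{W}}^\star$ directly, and $\mathcal{W}^\star_r$ follows from the bijection in part (a). The two arguments are equivalent in content; yours makes the reduction explicit rather than leaving it as a ``similar argument''.
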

\begin{proof}
(a) By the optimality condition of \EqRef{eq:regularizedprob}, $\mathbf{w}^\star$ is an optimal solution to \EqRef{eq:regularizedprob}, if and only if
\begin{align}\label{eq:iffeqreg}
\nabla f(\mathbf{w}^\star)^T(\mathbf{w}-\mathbf{w}^\star)+r(\mathbf{w})-r(\mathbf{w}^\star)\geq0,~\forall\mathbf{w}\in\mathbb{R}^d.
\end{align}
By the optimality condition of \EqRef{eq:equivprob}, $(\mathbf{w}^\star,\varpi^\star)$ is an optimal solution to \EqRef{eq:equivprob}, if and only if
\begin{align}\label{eq:iffeqequiv}
\nabla f(\mathbf{w}^\star)^T(\mathbf{w}-\mathbf{w}^\star)+\varpi-\varpi^\star\geq0,~\forall \mathbf{w},\varpi~\text{such that}~r(\mathbf{w})\leq \varpi.
\end{align}
If $\mathbf{w}^\star$ is an optimal solution to \EqRef{eq:regularizedprob}, then \EqRef{eq:iffeqreg} immediately implies \EqRef{eq:iffeqequiv} by setting $\varpi^\star=r(\mathbf{w}^\star)$,
i.e., $(\mathbf{w}^\star,r(\mathbf{w}^\star))$ must be an optimal solution to \EqRef{eq:equivprob}. If $(\mathbf{w}^\star,\varpi^\star)$ is an optimal solution to \EqRef{eq:equivprob}, let us
assume that $\varpi^\star>r(\mathbf{w}^\star)$. Then we have $\widetilde{F}(\mathbf{w}^\star,\varpi^\star)>\widetilde{F}(\mathbf{w}^\star,r(\mathbf{w}^\star))$, which contradicts the fact that
$(\mathbf{w}^\star,\varpi^\star)$ is an optimal solution to \EqRef{eq:equivprob}. Therefore, $\varpi^\star=r(\mathbf{w}^\star)$ must hold. Moreover, \EqRef{eq:iffeqequiv} immediately implies \EqRef{eq:iffeqreg} by setting and $\varpi=r(\mathbf{w})$ and considering that $\varpi^\star=r(\mathbf{w}^\star)$, i.e., $\mathbf{w}^\star$ must be an optimal solution to \EqRef{eq:regularizedprob}.

(b) Recalling that $r(\mathbf{w})$ is convex, we can use a similar argument of \LemmaRef{lemma:globalsolution} (in \ApdRef{appendix:auxlemmas}) to show that there exist constants $\tilde{\mathbf{r}}^\star_1,\tilde{\mathbf{r}}^\star_2,\tilde{s}^\star_1,\tilde{s}^\star_2$ such that the optimal solution sets to \EqRef{eq:regularizedprob} and \EqRef{eq:equivprob} are $\mathcal{W}^\star_r=\{\mathbf{w}^\star:X\mathbf{w}^\star=\tilde{\mathbf{r}}^\star_1, r(\mathbf{w}^\star)=\tilde{s}^\star_1\}$ and
$\widetilde{\mathcal{W}}^\star=\{(\mathbf{w}^\star,\varpi^\star):X\mathbf{w}^\star_2=\tilde{\mathbf{r}}^\star_2, r(\mathbf{w}^\star)\leq\varpi^\star=\tilde{s}^\star_2\}$. By the equivalence in (a), there exist $\tilde{\mathbf{r}}^\star$ and $\tilde{s}^\star$ such that
$\tilde{\mathbf{r}}^\star=\tilde{\mathbf{r}}^\star_1=\tilde{\mathbf{r}}^\star_2$ and $\tilde{s}^\star=\tilde{s}^\star_1=\tilde{s}^\star_2$. Next, we prove that $\{(\mathbf{w}^\star,\varpi^\star):X\mathbf{w}^\star=\tilde{\mathbf{r}}^\star, r(\mathbf{w}^\star)\leq\varpi^\star=\tilde{s}^\star\}=\{(\mathbf{w}^\star,\varpi^\star):X\mathbf{w}^\star=\tilde{\mathbf{r}}^\star, r(\mathbf{w}^\star)=\varpi^\star=\tilde{s}^\star\}$. To show this, we only need to prove that
$\{(\mathbf{w}^\star,\varpi^\star):X\mathbf{w}^\star=\tilde{\mathbf{r}}^\star, r(\mathbf{w}^\star)\leq\varpi^\star=\tilde{s}^\star\}\subseteq\{(\mathbf{w}^\star,\varpi^\star):X\mathbf{w}^\star=\tilde{\mathbf{r}}^\star, r(\mathbf{w}^\star)=\varpi^\star=\tilde{s}^\star\}$,
since $\{(\mathbf{w}^\star,\varpi^\star):X\mathbf{w}^\star=\tilde{\mathbf{r}}^\star, r(\mathbf{w}^\star)=\varpi^\star=\tilde{s}^\star\}\subseteq\{(\mathbf{w}^\star,\varpi^\star):X\mathbf{w}^\star=\tilde{\mathbf{r}}^\star, r(\mathbf{w}^\star)\leq\varpi^\star=\tilde{s}^\star\}$ holds trivially. Let $(\mathbf{w}^\star,\varpi^\star)\in\{(\mathbf{w}^\star,\varpi^\star):X\mathbf{w}^\star=\tilde{\mathbf{r}}^\star, r(\mathbf{w}^\star)\leq\varpi^\star=\tilde{s}^\star\}$, i.e., $(\mathbf{w}^\star,\varpi^\star)$ is an optimal solution to \EqRef{eq:equivprob}. Then, by the conclusion in (a), we have $r(\mathbf{w}^\star)=\varpi^\star=\tilde{s}^\star$, which immediately implies that $(\mathbf{w}^\star,\varpi^\star)\in\{(\mathbf{w}^\star,\varpi^\star):X\mathbf{w}^\star=\tilde{\mathbf{r}}^\star, r(\mathbf{w}^\star)=\varpi^\star=\tilde{s}^\star\}$. Therefore, we have $\{(\mathbf{w}^\star,\varpi^\star):X\mathbf{w}^\star=\tilde{\mathbf{r}}^\star, r(\mathbf{w}^\star)\leq\varpi^\star=\tilde{s}^\star\}\subseteq\{(\mathbf{w}^\star,\varpi^\star):X\mathbf{w}^\star=\tilde{\mathbf{r}}^\star, r(\mathbf{w}^\star)=\varpi^\star=\tilde{s}^\star\}$.
\qed
\end{proof}

\section{Proofs of Linear Convergence Theorems}\label{appendix:prooftheorems}
In addition to the SSC inequalities in Lemmas \ref{lemma:strongineq}, \ref{lemma:regstrongineq}, we also need the following two lemmas (Lemmas \ref{lemma:stocgradvar}, \ref{lemma:interbound}) to prove the linear convergence theorems.
Note that Lemmas \ref{lemma:stocgradvar}, \ref{lemma:interbound}
are established for constrained optimization problems which are adapted from Corollary 3.5 and Lemma 3.7
for regularized optimization problems in \cite{xiao2014proximal}.

The first lemma bounds the variance of $\mathbf{v}_{t}^k$ in terms of the difference of objective functions.
\begin{lemma}\label{lemma:stocgradvar}
Let $\mathbf{w}^\star\in\mathcal{W}^\star_c$ be any optimal solution to the problem in \EqRef{eq:constrainedprob}, $f^\star=f(\mathbf{w}^\star)$ be the optimal objective function value in \EqRef{eq:constrainedprob}. Then under
assumptions \textbf{A1}-\textbf{A3}, we have
\begin{align}
&\Expect{\mathcal{F}^k_t}{\mathbf{v}_{t}^k\mid\mathcal{F}^{k}_{t-1}}=\nabla f(\mathbf{w}^k_{t-1}),\label{eq:stocgradmean}\\
&\Expect{\mathcal{F}^k_t}{\left\|\mathbf{v}_{t}^k-\nabla f(\mathbf{w}^k_{t-1})\right\|^2\mid\mathcal{F}^{k}_{t-1}}\leq4L_P\left(f(\mathbf{w}_{t-1}^k)-f^{\star}+f(\tilde{\mathbf{w}}^{k-1})-f^{\star}\right),\label{eq:stocgradvar}
\end{align}
where $\mathcal{F}^k_t$ is defined in \ThemRef{theorem:linearconvergence}; $\mathbf{v}_{t}^k, \mathbf{w}^k_{t-1}, \tilde{\mathbf{w}}^{k-1}$ are defined in \AlgRef{alg:projstocgrad};
$L_P=\max_{i\in\{1,\cdots,n\}}[L_i/(np_i)]$.
\end{lemma}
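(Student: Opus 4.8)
The plan is to prove the two claims separately, in both cases conditioning on $\mathcal{F}^k_{t-1}$ so that the iterates $\mathbf{w}^k_{t-1}$ and $\tilde{\mathbf{w}}^{k-1}$ are deterministic and the only randomness is the index $i^k_t$, drawn according to $P=\{p_1,\dots,p_n\}$ independently of $\mathcal{F}^k_{t-1}$. It is convenient to set $\mathbf{u}_i=(\nabla f_i(\mathbf{w}^k_{t-1})-\nabla f_i(\tilde{\mathbf{w}}^{k-1}))/(np_i)$, so that $\mathbf{v}_t^k=\mathbf{u}_{i^k_t}+\tilde{\bm{\xi}}^{k-1}$ with $\tilde{\bm{\xi}}^{k-1}=\nabla f(\tilde{\mathbf{w}}^{k-1})$ a fixed vector. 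For \EqRef{eq:stocgradmean}, I would take the conditional expectation over $i^k_t$: using $\sum_{i=1}^n p_i=1$ and $f=\frac{1}{n}\sum_{i=1}^n f_i$ one gets $\Expect{\mathcal{F}^k_t}{\mathbf{u}_{i^k_t}\mid\mathcal{F}^k_{t-1}}=\sum_{i=1}^n p_i\mathbf{u}_i=\frac{1}{n}\sum_{i=1}^n(\nabla f_i(\mathbf{w}^k_{t-1})-\nabla f_i(\tilde{\mathbf{w}}^{k-1}))=\nabla f(\mathbf{w}^k_{t-1})-\nabla f(\tilde{\mathbf{w}}^{k-1})$, and adding the constant $\tilde{\bm{\xi}}^{k-1}$ yields $\nabla f(\mathbf{w}^k_{t-1})$.

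For the variance bound \EqRef{eq:stocgradvar}, the key point is that, by the unbiasedness just established, $\mathbf{v}_t^k-\nabla f(\mathbf{w}^k_{t-1})=\mathbf{u}_{i^k_t}-\Expect{\mathcal{F}^k_t}{\mathbf{u}_{i^k_t}\mid\mathcal{F}^k_{t-1}}$, i.e. the deterministic offset $\tilde{\bm{\xi}}^{k-1}$ cancels and the problem reduces to bounding the variance of the single random term $\mathbf{u}_{i^k_t}$. Applying the elementary inequality $\Expectg{\|Z-\Expectg{Z}\|^2}\le\Expectg{\|Z\|^2}$ gives $\Expect{\mathcal{F}^k_t}{\|\mathbf{v}_t^k-\nabla f(\mathbf{w}^k_{t-1})\|^2\mid\mathcal{F}^k_{t-1}}\le\frac{1}{n}\sum_{i=1}^n\frac{1}{np_i}\|\nabla f_i(\mathbf{w}^k_{t-1})-\nabla f_i(\tilde{\mathbf{w}}^{k-1})\|^2$. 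I would then split around the common point $\nabla f_i(\mathbf{w}^\star)$ using $\|\mathbf{a}-\mathbf{b}\|^2\le 2\|\mathbf{a}-\mathbf{c}\|^2+2\|\mathbf{b}-\mathbf{c}\|^2$ and apply \LemmaRef{lemma:graddiffbound} to each of the two resulting sums --- which is legitimate because the projection step in Line 10 of \AlgRef{alg:projstocgrad} guarantees $\mathbf{w}^k_{t-1},\tilde{\mathbf{w}}^{k-1}\in\mathcal{W}$ --- to obtain the bound $2\cdot 2L_P(f(\mathbf{w}^k_{t-1})-f^\star)+2\cdot 2L_P(f(\tilde{\mathbf{w}}^{k-1})-f^\star)$, which is exactly the right-hand side of \EqRef{eq:stocgradvar}.

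I do not anticipate a serious obstacle: once \LemmaRef{lemma:graddiffbound} is available, the argument is bookkeeping and follows the standard SVRG variance-reduction template of \cite{xiao2014proximal} transplanted to the constrained setting. The only point that needs a little care is the handling of the conditioning --- ensuring that $i^k_t$ is independent of $\mathcal{F}^k_{t-1}$ so that $\mathbf{w}^k_{t-1}$ and $\tilde{\mathbf{w}}^{k-1}$ may be treated as constants under $\Expect{\mathcal{F}^k_t}{\cdot\mid\mathcal{F}^k_{t-1}}$ --- together with noticing the cancellation of $\tilde{\bm{\xi}}^{k-1}$ that collapses the variance computation to the single term $\mathbf{u}_{i^k_t}$.
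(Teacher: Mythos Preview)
Your proposal is correct and follows essentially the same route as the paper: unbiasedness by direct computation of the conditional expectation, then the variance bound via $\Expectg{\|Z-\Expectg{Z}\|^2}\le\Expectg{\|Z\|^2}$, the split around $\nabla f_i(\mathbf{w}^\star)$ using $\|\mathbf{a}-\mathbf{b}\|^2\le2\|\mathbf{a}-\mathbf{c}\|^2+2\|\mathbf{b}-\mathbf{c}\|^2$, and two applications of \LemmaRef{lemma:graddiffbound}. The only cosmetic refinement is that $\tilde{\mathbf{w}}^{k-1}\in\mathcal{W}$ is not literally a projected iterate but the average of projected iterates, hence lies in $\mathcal{W}$ by convexity --- the paper makes this explicit, and you may want to as well.
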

\begin{proof}
Taking expectation with respect to $\mathcal{F}^{k}_{t}$ conditioned on $\mathcal{F}^{k}_{t-1}$ and noticing that $\mathcal{F}^{k}_{t}=\mathcal{F}^{k}_{t-1}\cup\{i^k_t\}$, we have
\begin{align}
&\Expect{\mathcal{F}^k_t}{\frac{1}{np_{i^k_t}}\nabla f_{i^k_t}(\mathbf{w}^k_{t-1})\mid\mathcal{F}^{k}_{t-1}}=\sum_{i=1}^n\frac{p_{i}}{np_{i}}\nabla f_{i}(\mathbf{w}^k_{t-1})=\nabla f(\mathbf{w}^k_{t-1}),\nonumber\\
&\Expect{\mathcal{F}^k_t}{\frac{1}{np_{i^k_t}}\nabla f_{i^k_t}(\tilde{\mathbf{w}}^{k-1})\mid\mathcal{F}^{k}_{t-1}}=\sum_{i=1}^n\frac{p_{i}}{np_{i}}\nabla f_{i}(\tilde{\mathbf{w}}^{k-1})=\nabla f(\tilde{\mathbf{w}}^{k-1}).\nonumber
\end{align}
It follows that
\begin{align}
\Expect{\mathcal{F}^k_t}{\mathbf{v}_{t}^k\mid\mathcal{F}^{k}_{t-1}}=\Expect{\mathcal{F}^k_t}{\frac{1}{np_{i^k_t}}(\nabla f_{i^k_t}(\mathbf{w}^k_{t-1})-\nabla f_{i^k_t}(\tilde{\mathbf{w}}^{k-1}))+\nabla f(\tilde{\mathbf{w}}^{k-1})\mid\mathcal{F}^{k}_{t-1}}=\nabla f(\mathbf{w}^k_{t-1}).\nonumber
\end{align}
We next prove \EqRef{eq:stocgradvar} as follows:
\begin{align}
&\Expect{\mathcal{F}^k_t}{\left\|\mathbf{v}_{t}^k-\nabla f(\mathbf{w}^k_{t-1})\right\|^2\mid\mathcal{F}^{k}_{t-1}}\nonumber\\
=&\Expect{\mathcal{F}^k_t}{\left\|\frac{1}{np_{i^k_t}}\left(\nabla f_{i^k_t}(\mathbf{w}_{t-1}^k)-\nabla f_{i^k_t}(\tilde{\mathbf{w}}^{k-1})\right)-\left(\nabla f(\mathbf{w}_{t-1}^{k})-\nabla f(\tilde{\mathbf{w}}^{k-1})\right)\right\|^2\mid\mathcal{F}^{k}_{t-1}}\nonumber\\
=&\Expect{\mathcal{F}^k_t}{\left\|\frac{1}{np_{i^k_t}}\left(\nabla f_{i^k_t}(\mathbf{w}_{t-1}^k)-\nabla f_{i^k_t}(\tilde{\mathbf{w}}^{k-1})\right)\right\|^2\mid\mathcal{F}^{k}_{t-1}}-\left\|\nabla f(\mathbf{w}_{t-1}^{k})-\nabla f(\tilde{\mathbf{w}}^{k-1})\right\|^2\nonumber\\
\leq&\Expect{\mathcal{F}^k_t}{\left\|\frac{1}{np_{i^k_t}}\left(\nabla f_{i^k_t}(\mathbf{w}_{t-1}^k)-\nabla f_{i^k_t}(\tilde{\mathbf{w}}^{k-1})\right)\right\|^2\mid\mathcal{F}^{k}_{t-1}}\nonumber\\
\leq&2\Expect{\mathcal{F}^k_t}{\left\|\frac{1}{np_{i^k_t}}\left(\nabla f_{i^k_t}(\mathbf{w}_{t-1}^k)-\nabla f_{i^k_t}(\mathbf{w}^{\star})\right)\right\|^2\mid\mathcal{F}^{k}_{t-1}}\nonumber\\
&+2\Expect{\mathcal{F}^k_t}{\left\|\frac{1}{np_{i^k_t}}\left(\nabla f_{i^k_t}(\tilde{\mathbf{w}}^{k-1})-\nabla f_{i^k_t}(\mathbf{w}^\star)\right)\right\|^2\mid\mathcal{F}^{k}_{t-1}}\nonumber\\
=&2\sum_{i=1}^n\frac{p_i}{(np_{i})^2}\left\|\nabla f_{i}(\mathbf{w}_{t-1}^k)-\nabla f_{i}(\mathbf{w}^{\star})\right\|^2+2\sum_{i=1}^n\frac{p_i}{(np_{i})^2}\left\|\nabla f_{i}(\tilde{\mathbf{w}}^{k-1})-\nabla f_{i}(\mathbf{w}^{\star})\right\|^2\nonumber\\
\leq&4L_P\left(f(\mathbf{w}_{t-1}^k)-f(\mathbf{w}^{\star})+f(\tilde{\mathbf{w}}^{k-1})-f(\mathbf{w}^{\star})\right)\nonumber\\
=&4L_P\left(f(\mathbf{w}_{t-1}^k)-f^{\star}+f(\tilde{\mathbf{w}}^{k-1})-f^{\star}\right),\nonumber
\end{align}
where the second equality is due to
\begin{align}
\Expect{\mathcal{F}^k_t}{\frac{1}{np_{i^k_t}}\left(\nabla f_{i^k_t}(\mathbf{w}_{t-1}^k)-\nabla f_{i^k_t}(\tilde{\mathbf{w}}^{k-1})\right)\mid\mathcal{F}^{k}_{t-1}}=\nabla f(\mathbf{w}^{k})-\nabla f(\tilde{\mathbf{w}}^{k-1})\nonumber
\end{align}
and $\Expectg{\|\bm{\xi}-\Expectg{\bm{\xi}}\|^2}=\Expectg{\|\bm{\xi}\|^2}-\|\Expectg{\bm{\xi}}\|^2$ for all random vector $\bm{\xi}\in\mathbb{R}^d$;
the second inequality is due to $\|\mathbf{x}+\mathbf{y}\|^2\leq2\|\mathbf{x}\|^2+2\|\mathbf{y}\|^2$;
the third inequality is due to \LemmaRef{lemma:graddiffbound} with $\mathbf{w}_{t-1}^k,\tilde{\mathbf{w}}^{k-1}\in\mathcal{W}$, where $\mathbf{w}_{t-1}^k\in\mathcal{W}$ is obvious
and $\tilde{\mathbf{w}}^{k-1}\in\mathcal{W}$ follows from the fact that $\tilde{\mathbf{w}}^{k-1}$ is a convex combination of vectors in the convex set $\mathcal{W}$.
\qed
\end{proof}

The second lemma presents a bound independent of the algorithm. The terms in the left-hand side of the bound will appear in the proof of \ThemRef{theorem:linearconvergence}.
\begin{lemma}\label{lemma:interbound}
Let $\mathbf{w}^\star\in\mathcal{W}^\star_c$ be any optimal solution to the problem in \EqRef{eq:constrainedprob}, $f^\star=f(\mathbf{w}^\star)$ be the optimal objective function value in \EqRef{eq:constrainedprob}, $\bm{\delta}_{t}^k=\nabla f(\mathbf{w}^k_{t-1})-\mathbf{v}^k_{t}$, $\mathbf{g}^k_{t}=(\mathbf{w}^k_{t-1}-\mathbf{w}^k_{t})/\eta$ and $0<\eta\leq1/L$. Then we have
\begin{align}
\left(\mathbf{w}^\star-\mathbf{w}^k_{t-1}\right)^T\mathbf{g}^k_{t}+\frac{\eta}{2}\left\|\mathbf{g}^k_{t}\right\|^2\leq f^\star-f(\mathbf{w}^k_{t})
-\left(\mathbf{w}^\star-\mathbf{w}^k_{t}\right)^T\bm{\delta}_{t}^k.\nonumber
\end{align}
\end{lemma}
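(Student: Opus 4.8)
The plan is to exploit the fact that $\mathbf{w}^k_t$ is the Euclidean projection of $\mathbf{w}^k_{t-1} - \eta \mathbf{v}^k_t$ onto $\mathcal{W}$, so the first-order optimality (variational inequality) of that projection step gives us the necessary inner-product inequality, and then to combine it with the descent-lemma consequence of $L$-Lipschitz continuity of $\nabla f$ and the convexity of $f$. First I would write out the optimality condition for the projection in Line 10 of Algorithm \ref{alg:projstocgrad}: since $\mathbf{w}^k_t = \Argmin_{\mathbf{w}\in\mathcal{W}} \frac12\|\mathbf{w} - (\mathbf{w}^k_{t-1} - \eta\mathbf{v}^k_t)\|^2$, for every $\mathbf{w}\in\mathcal{W}$ (in particular for $\mathbf{w} = \mathbf{w}^\star$) we have $(\mathbf{w}^k_t - \mathbf{w}^k_{t-1} + \eta\mathbf{v}^k_t)^T(\mathbf{w} - \mathbf{w}^k_t) \geq 0$. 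Using $\mathbf{g}^k_t = (\mathbf{w}^k_{t-1} - \mathbf{w}^k_t)/\eta$, this rearranges to $(\mathbf{v}^k_t - \mathbf{g}^k_t)^T(\mathbf{w}^\star - \mathbf{w}^k_t) \geq 0$, i.e. $\mathbf{v}^{k\,T}_t(\mathbf{w}^\star - \mathbf{w}^k_t) \geq \mathbf{g}^{k\,T}_t(\mathbf{w}^\star - \mathbf{w}^k_t)$.

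Next I would apply the standard descent lemma: since $\nabla f$ is $L$-Lipschitz and $0 < \eta \leq 1/L$, we get $f(\mathbf{w}^k_t) \leq f(\mathbf{w}^k_{t-1}) + \nabla f(\mathbf{w}^k_{t-1})^T(\mathbf{w}^k_t - \mathbf{w}^k_{t-1}) + \frac{L}{2}\|\mathbf{w}^k_t - \mathbf{w}^k_{t-1}\|^2 \leq f(\mathbf{w}^k_{t-1}) - \eta\nabla f(\mathbf{w}^k_{t-1})^T\mathbf{g}^k_t + \frac{\eta}{2}\|\eta\mathbf{g}^k_t\|^2 \cdot \frac{L}{\eta}$, which with $\eta L \leq 1$ simplifies to $f(\mathbf{w}^k_t) \leq f(\mathbf{w}^k_{t-1}) - \eta\nabla f(\mathbf{w}^k_{t-1})^T\mathbf{g}^k_t + \frac{\eta}{2}\|\mathbf{g}^k_t\|^2$. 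Then I would use convexity of $f$ to bound $f(\mathbf{w}^k_{t-1})$: $f(\mathbf{w}^k_{t-1}) \leq f^\star + \nabla f(\mathbf{w}^k_{t-1})^T(\mathbf{w}^k_{t-1} - \mathbf{w}^\star)$. Substituting the convexity bound into the descent bound, the terms $\pm \nabla f(\mathbf{w}^k_{t-1})^T \mathbf{w}^k_{t-1}$ combine, leaving $f(\mathbf{w}^k_t) \leq f^\star + \nabla f(\mathbf{w}^k_{t-1})^T(\mathbf{w}^k_t - \mathbf{w}^\star) + \frac{\eta}{2}\|\mathbf{g}^k_t\|^2 - \eta\nabla f(\mathbf{w}^k_{t-1})^T\mathbf{g}^k_t$. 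Wait—I need to be careful with the cross terms; the cleaner route is: from the descent bound and convexity, $f(\mathbf{w}^k_t) - f^\star \leq \nabla f(\mathbf{w}^k_{t-1})^T(\mathbf{w}^k_{t-1} - \mathbf{w}^\star) - \eta\nabla f(\mathbf{w}^k_{t-1})^T\mathbf{g}^k_t + \frac{\eta}{2}\|\mathbf{g}^k_t\|^2 = \nabla f(\mathbf{w}^k_{t-1})^T(\mathbf{w}^k_t - \mathbf{w}^\star) + \frac{\eta}{2}\|\mathbf{g}^k_t\|^2$ since $\mathbf{w}^k_{t-1} - \eta\mathbf{g}^k_t = \mathbf{w}^k_t$.

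Finally I would splice in the projection inequality to replace $\nabla f(\mathbf{w}^k_{t-1})$ by $\mathbf{g}^k_t$ at the cost of a $\bm{\delta}^k_t$ term. Writing $\nabla f(\mathbf{w}^k_{t-1}) = \mathbf{v}^k_t + \bm{\delta}^k_t$, the term $\nabla f(\mathbf{w}^k_{t-1})^T(\mathbf{w}^k_t - \mathbf{w}^\star) = \mathbf{v}^{k\,T}_t(\mathbf{w}^k_t - \mathbf{w}^\star) + \bm{\delta}^{k\,T}_t(\mathbf{w}^k_t - \mathbf{w}^\star) \leq \mathbf{g}^{k\,T}_t(\mathbf{w}^k_t - \mathbf{w}^\star) + \bm{\delta}^{k\,T}_t(\mathbf{w}^k_t - \mathbf{w}^\star)$, using the projection inequality $\mathbf{v}^{k\,T}_t(\mathbf{w}^\star - \mathbf{w}^k_t) \geq \mathbf{g}^{k\,T}_t(\mathbf{w}^\star - \mathbf{w}^k_t)$, i.e. $\mathbf{v}^{k\,T}_t(\mathbf{w}^k_t - \mathbf{w}^\star) \leq \mathbf{g}^{k\,T}_t(\mathbf{w}^k_t - \mathbf{w}^\star)$. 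Then $f(\mathbf{w}^k_t) - f^\star \leq \mathbf{g}^{k\,T}_t(\mathbf{w}^k_t - \mathbf{w}^\star) + \bm{\delta}^{k\,T}_t(\mathbf{w}^k_t - \mathbf{w}^\star) + \frac{\eta}{2}\|\mathbf{g}^k_t\|^2$, and writing $\mathbf{w}^k_t - \mathbf{w}^\star = (\mathbf{w}^k_{t-1} - \mathbf{w}^\star) - \eta\mathbf{g}^k_t$ in the first term yields $\mathbf{g}^{k\,T}_t(\mathbf{w}^k_t - \mathbf{w}^\star) = \mathbf{g}^{k\,T}_t(\mathbf{w}^k_{t-1} - \mathbf{w}^\star) - \eta\|\mathbf{g}^k_t\|^2$; rearranging gives exactly $(\mathbf{w}^\star - \mathbf{w}^k_{t-1})^T\mathbf{g}^k_t + \frac{\eta}{2}\|\mathbf{g}^k_t\|^2 \leq f^\star - f(\mathbf{w}^k_t) - (\mathbf{w}^\star - \mathbf{w}^k_t)^T\bm{\delta}^k_t$, which is the claim. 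The only subtlety (and the step I would double-check most carefully) is the bookkeeping of the quadratic $\|\mathbf{g}^k_t\|^2$ coefficients across the three substitutions, so that the $\frac{\eta}{2}$ on the left is exactly what survives; everything else is a routine assembly of the projection variational inequality, the descent lemma, and convexity.
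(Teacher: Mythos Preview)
Your proposal is correct and follows essentially the same approach as the paper's proof: both combine the projection optimality condition $(\mathbf{w}^\star-\mathbf{w}^k_t)^T\mathbf{v}^k_t \geq (\mathbf{w}^\star-\mathbf{w}^k_t)^T\mathbf{g}^k_t$, the convexity of $f$, and the descent lemma for $L$-Lipschitz $\nabla f$, then use $\eta L\leq 1$ to absorb the quadratic term. The only cosmetic difference is that the paper keeps the coefficient $\tfrac{L\eta^2}{2}$ until the end (using $2-L\eta\geq 1$), whereas you apply $L\eta\leq 1$ one step earlier; the algebra is otherwise identical.
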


\begin{proof}
We know that $\mathbf{w}^\star\in\mathcal{W}^\star_c\subseteq\mathcal{W}$. Thus, by the optimality condition of
$\mathbf{w}_{t}^{k}=\Pi_{\mathcal{W}}(\mathbf{w}_{t-1}^{k}-\eta\mathbf{v}_{t}^k)=\Argmin_{\mathbf{w}\in\mathcal{W}}\frac{1}{2}\|\mathbf{w}-(\mathbf{w}_{t-1}^{k}-\eta\mathbf{v}_{t}^k)\|^2$, we have
\begin{align}
(\mathbf{w}_{t}^{k}-\mathbf{w}_{t-1}^{k}+\eta\mathbf{v}_{t}^k)^T(\mathbf{w}^\star-\mathbf{w}_{t}^{k})\geq0,\nonumber
\end{align}
which together with $\mathbf{g}^k_{t}=(\mathbf{w}^k_{t-1}-\mathbf{w}^k_{t})/\eta$ implies that
\begin{align}
(\mathbf{w}^\star-\mathbf{w}_{t}^{k})^T\mathbf{v}_{t}^k\geq (\mathbf{w}^\star-\mathbf{w}_{t}^{k})^T\mathbf{g}^k_{t}.\label{eq:projoptcondition}
\end{align}
By the convexity of $f(\cdot)$, we have
\begin{align}
f(\mathbf{w}^\star)\geq f(\mathbf{w}_{t-1}^{k})+\nabla f(\mathbf{w}_{t-1}^{k})^T(\mathbf{w}^\star-\mathbf{w}_{t-1}^{k}).\label{eq:convexityineq}
\end{align}
Recalling that $f(\cdot)$ is $L$-Lipschitz continuous gradient, we have
\begin{align}
f(\mathbf{w}_{t-1}^{k})\geq f(\mathbf{w}_{t}^{k})-\nabla f(\mathbf{w}_{t-1}^{k})^T(\mathbf{w}_{t}^{k}-\mathbf{w}_{t-1}^{k})-\frac{L}{2}\left\|\mathbf{w}^k_{t}-\mathbf{w}^k_{t-1}\right\|^2,\nonumber
\end{align}
which together with \EqRef{eq:convexityineq} implies that
\begin{align}
f(\mathbf{w}^\star)\geq &f(\mathbf{w}_{t}^{k})-\nabla f(\mathbf{w}_{t-1}^{k})^T(\mathbf{w}_{t}^{k}-\mathbf{w}_{t-1}^{k})-\frac{L}{2}\left\|\mathbf{w}^k_{t}-\mathbf{w}^k_{t-1}\right\|^2+\nabla f(\mathbf{w}_{t-1}^{k})^T(\mathbf{w}^\star-\mathbf{w}_{t-1}^{k})\nonumber\\
= &f(\mathbf{w}_{t}^{k})+\nabla f(\mathbf{w}_{t-1}^{k})^T(\mathbf{w}^\star-\mathbf{w}_{t}^{k})-\frac{L\eta^2}{2}\left\|\mathbf{g}^k_{t}\right\|^2\nonumber\\
=& f(\mathbf{w}_{t}^{k})+(\mathbf{w}^\star-\mathbf{w}_{t}^{k})^T\bm{\delta}_{t}^k+(\mathbf{w}^\star-\mathbf{w}_{t}^{k})^T\mathbf{v}_{t}^{k}-\frac{L\eta^2}{2}\left\|\mathbf{g}^k_{t}\right\|^2\nonumber\\
\geq& f(\mathbf{w}_{t}^{k})+(\mathbf{w}^\star-\mathbf{w}_{t}^{k})^T\bm{\delta}_{t}^k+(\mathbf{w}^\star-\mathbf{w}_{t}^{k})^T\mathbf{g}^k_{t}-\frac{L\eta^2}{2}\left\|\mathbf{g}^k_{t}\right\|^2\nonumber\\
=&f(\mathbf{w}_{t}^{k})+(\mathbf{w}^\star-\mathbf{w}_{t}^{k})^T\bm{\delta}_{t}^k+(\mathbf{w}^\star-\mathbf{w}_{t-1}^{k}+\mathbf{w}_{t-1}^{k}-\mathbf{w}_{t}^{k})^T\mathbf{g}^k_{t}-\frac{L\eta^2}{2}\left\|\mathbf{g}^k_{t}\right\|^2\nonumber\\
=&f(\mathbf{w}_{t}^{k})+(\mathbf{w}^\star-\mathbf{w}_{t}^{k})^T\bm{\delta}_{t}^k+(\mathbf{w}^\star-\mathbf{w}_{t-1}^{k})^T\mathbf{g}^k_{t}+\frac{\eta}{2}(2-L\eta)\left\|\mathbf{g}^k_{t}\right\|^2\nonumber\\
\geq &f(\mathbf{w}_{t}^{k})+(\mathbf{w}^\star-\mathbf{w}_{t}^{k})^T\bm{\delta}_{t}^k+(\mathbf{w}^\star-\mathbf{w}_{t-1}^{k})^T\mathbf{g}^k_{t}+\frac{\eta}{2}\left\|\mathbf{g}^k_{t}\right\|^2,\nonumber
\end{align}
where the first and fourth equalities are due to $\mathbf{g}^k_{t}=(\mathbf{w}^k_{t-1}-\mathbf{w}^k_{t})/\eta$; the second equality is due to $\bm{\delta}_{t}^k=\nabla f(\mathbf{w}^k_{t-1})-\mathbf{v}^k_{t}$;
the second inequality is due to \EqRef{eq:projoptcondition}; the last inequality is due to $0<\eta\leq1/L$.
Rearranging the above inequality by noticing that $f^\star=f(\mathbf{w}^\star)$, we prove the lemma.
\qed
\end{proof}

Based on Lemmas \ref{lemma:strongineq}, \ref{lemma:stocgradvar}, \ref{lemma:interbound}, we are now ready to complete the proof of \ThemRef{theorem:linearconvergence} as follows:
\begin{proof}[Proof of \ThemRef{theorem:linearconvergence}]
Let $\bar{\mathbf{w}}^{k}_{t}=\Pi_{\mathcal{W}^\star_c}(\mathbf{w}^k_t)$ for all $k,t\geq0$. Then we have $\bar{\mathbf{w}}^{k}_{t-1}\in\mathcal{W}^\star_c$,
which together with the definition of $\bar{\mathbf{w}}^{k}_{t}$ and $\mathbf{g}^k_{t}=(\mathbf{w}^k_{t-1}-\mathbf{w}^k_{t})/\eta$ implies that
\begin{align}
&\left\|\mathbf{w}^k_t-\bar{\mathbf{w}}^{k}_{t}\right\|^2\leq\left\|\mathbf{w}^k_t-\bar{\mathbf{w}}^{k}_{t-1}\right\|^2=\left\|\mathbf{w}^k_{t-1}-\eta\mathbf{g}^k_{t}-\bar{\mathbf{w}}^{k}_{t-1}\right\|^2\nonumber\\
&=\left\|\mathbf{w}^k_{t-1}-\bar{\mathbf{w}}^{k}_{t-1}\right\|^2+2\eta(\bar{\mathbf{w}}^{k}_{t-1}-\mathbf{w}^k_{t-1})^T\mathbf{g}^k_{t}+\eta^2\left\|\mathbf{g}^k_{t}\right\|^2\nonumber\\
&\leq\left\|\mathbf{w}^k_{t-1}-\bar{\mathbf{w}}^{k}_{t-1}\right\|^2+2\eta\left(f^\star-f(\mathbf{w}^k_{t})
-(\bar{\mathbf{w}}^{k}_{t-1}-\mathbf{w}^k_{t})^T\bm{\delta}_{t}^k\right),\label{eq:iteratediff}
\end{align}
where the last inequality is due to \LemmaRef{lemma:interbound} with $\bar{\mathbf{w}}^{k}_{t-1}\in\mathcal{W}^\star_c$
and $0<\eta<1/(4L_P)<1/(L_P)\leq1/L$ (see \LemmaRef{lemma:lipschitzrelation} in the supplementary material).
To bound the quantity $-(\bar{\mathbf{w}}^{k}_{t-1}-\mathbf{w}^k_{t})^T\bm{\delta}_{t}^k$, we define an auxiliary vector as
\begin{align}
\hat{\mathbf{w}}^k_t=\Pi_{\mathcal{W}}(\mathbf{w}^k_{t-1}-\eta\nabla f(\mathbf{w}^k_{t-1})).\nonumber
\end{align}
Thus, we have
\begin{align}
-\left(\bar{\mathbf{w}}^{k}_{t-1}-\mathbf{w}^k_{t}\right)^T\bm{\delta}_{t}^k&=(\mathbf{w}^k_{t}-\hat{\mathbf{w}}^k_t+\hat{\mathbf{w}}^k_t-\bar{\mathbf{w}}^{k}_{t-1})^T\bm{\delta}_{t}^k\nonumber\\
&\leq\|\mathbf{w}^k_{t}-\hat{\mathbf{w}}^k_t\|\|\bm{\delta}_{t}^k\|+(\hat{\mathbf{w}}^k_t-\bar{\mathbf{w}}^{k}_{t-1})^T\bm{\delta}_{t}^k\nonumber\\
&\leq\|\mathbf{w}^k_{t-1}-\eta\mathbf{v}_{t}^k-(\mathbf{w}^k_{t-1}-\eta\nabla f(\mathbf{w}^k_{t-1}))\|\|\bm{\delta}_{t}^k\|+(\hat{\mathbf{w}}^k_t-\bar{\mathbf{w}}^{k}_{t-1})^T\bm{\delta}_{t}^k\nonumber\\
&=\eta\|\bm{\delta}_{t}^k\|^2+(\hat{\mathbf{w}}^k_t-\bar{\mathbf{w}}^{k}_{t-1})^T\bm{\delta}_{t}^k,\nonumber
\end{align}
where the second inequality is due to the non-expansive property of projection (Proposition B.11(c) in \cite{bertsekas1999nonlinear}).
The above inequality and \EqRef{eq:iteratediff} imply that
\begin{align}
\left\|\mathbf{w}^k_t-\bar{\mathbf{w}}^{k}_{t}\right\|^2\leq\left\|\mathbf{w}^k_{t-1}-\bar{\mathbf{w}}^{k}_{t-1}\right\|^2-2\eta\left(f(\mathbf{w}^k_{t})-f^\star\right)
+2\eta^2\|\bm{\delta}_{t}^k\|^2+2\eta(\hat{\mathbf{w}}^k_t-\bar{\mathbf{w}}^{k}_{t-1})^T\bm{\delta}_{t}^k.\label{eq:deltaineq}
\end{align}
Considering \LemmaRef{lemma:stocgradvar} with $\bm{\delta}_{t}^k=\nabla f(\mathbf{w}^k_{t-1})-\mathbf{v}^k_{t}$ and noticing that $\hat{\mathbf{w}}^k_t-\bar{\mathbf{w}}^{k}_{t-1}$
is independent of the random variable $i^k_t$ and $\mathcal{F}^{k}_{t}=\mathcal{F}^{k}_{t-1}\cup\{i^k_t\}$, we have
$\Expect{\mathcal{F}^k_t}{\|\bm{\delta}_{t}^k\|^2\mid\mathcal{F}^{k}_{t-1}}\leq4L_P\left(f(\mathbf{w}_{t-1}^k)-f^{\star}+f(\tilde{\mathbf{w}}^{k-1})-f^{\star}\right)$ and  $\Expect{\mathcal{F}^k_t}{(\hat{\mathbf{w}}^k_t-\bar{\mathbf{w}}^{k}_{t-1})^T\bm{\delta}_{t}^k\mid\mathcal{F}^{k}_{t-1}}=(\hat{\mathbf{w}}^k_t-\bar{\mathbf{w}}^{k}_{t-1})^T\Expect{\mathcal{F}^k_t}{\bm{\delta}_{t}^k\mid\mathcal{F}^{k}_{t-1}}=\mathbf{0}$.
Taking expectation with respect to $\mathcal{F}^{k}_{t}$ conditioned on $\mathcal{F}^{k}_{t-1}$ on both sides of \EqRef{eq:deltaineq}, we have
\begin{align}
\Expect{\mathcal{F}^k_t}{\left\|\mathbf{w}^k_t-\bar{\mathbf{w}}^{k}_{t}\right\|^2\mid\mathcal{F}^{k}_{t-1}}\leq &\left\|\mathbf{w}^k_{t-1}-\bar{\mathbf{w}}^{k}_{t-1}\right\|^2-2\eta\Expect{\mathcal{F}^k_t}{f(\mathbf{w}^k_{t})-f^\star\mid\mathcal{F}^{k}_{t-1}}\nonumber\\
&+2\eta^2\Expect{\mathcal{F}^k_t}{\|\bm{\delta}_{t}^k\|^2\mid\mathcal{F}^{k}_{t-1}}+2\eta(\hat{\mathbf{w}}^k_t-\bar{\mathbf{w}}^{k}_{t-1})^T\Expect{\mathcal{F}^k_t}{\bm{\delta}_{t}^k\mid\mathcal{F}^{k}_{t-1}}\nonumber\\
\leq &\left\|\mathbf{w}^k_{t-1}-\bar{\mathbf{w}}^{k}_{t-1}\right\|^2-2\eta\Expect{\mathcal{F}^k_t}{f(\mathbf{w}^k_{t})-f^\star\mid\mathcal{F}^{k}_{t-1}}\nonumber\\
&+8L_P\eta^2\left(f(\mathbf{w}_{t-1}^k)-f^{\star}+f(\tilde{\mathbf{w}}^{k-1})-f^{\star}\right).\nonumber
\end{align}
Taking expectation with respect to $\mathcal{F}^{k}_{t-1}$ on both sides of the above inequality and considering the fact that $\mathbb{E}_{\mathcal{F}^k_{t-1}}\left[\Expect{\mathcal{F}^k_{t}}{\left\|\mathbf{w}^k_t-\bar{\mathbf{w}}^{k}_{t}\right\|^2\mid\mathcal{F}^{k}_{t-1}}\right]=\Expect{\mathcal{F}^k_{t}}{\left\|\mathbf{w}^k_t-\bar{\mathbf{w}}^{k}_{t}\right\|^2}$,
we have
\begin{align}
\Expect{\mathcal{F}^k_t}{\left\|\mathbf{w}^k_t-\bar{\mathbf{w}}^{k}_{t}\right\|^2}\leq&\Expect{\mathcal{F}^k_{t-1}}{\left\|\mathbf{w}^k_{t-1}-\bar{\mathbf{w}}^{k}_{t-1}\right\|^2}-2\eta\Expect{\mathcal{F}^k_{t}}{f(\mathbf{w}^k_{t})-f^\star}\nonumber\\
&+8L_P\eta^2\Expect{\mathcal{F}^k_{t-1}}{f(\mathbf{w}_{t-1}^k)-f^{\star}+f(\tilde{\mathbf{w}}^{k-1})-f^{\star}}.\nonumber
\end{align}
Summing the above inequality over $t=1,2,\cdots,m$ by noticing that $\mathcal{F}^{k}_{0}=\mathcal{F}^{k-1}_{m}$, we have
\begin{align}
&\Expect{\mathcal{F}^k_m}{\left\|\mathbf{w}^k_m-\bar{\mathbf{w}}^{k}_{m}\right\|^2}+2\eta\sum_{t=1}^{m}\Expect{\mathcal{F}^k_t}{f(\mathbf{w}^k_{t})-f^\star}\nonumber\\
\leq &\Expect{\mathcal{F}^{k-1}_m}{\left\|\mathbf{w}^k_0-\bar{\mathbf{w}}^{k}_{0}\right\|^2}+8L_P\eta^2\sum_{t=1}^m\Expect{\mathcal{F}^k_{t-1}}{f(\mathbf{w}_{t-1}^k)-f^{\star}}+8L_P\eta^2m\Expect{\mathcal{F}^k_{t-1}}{f(\tilde{\mathbf{w}}^{k-1})-f^{\star})},\nonumber
\end{align}
Thus, we have
\begin{align}
&\Expect{\mathcal{F}^k_m}{\left\|\mathbf{w}^k_m-\bar{\mathbf{w}}^{k}_{m}\right\|^2}+2\eta\Expect{\mathcal{F}^k_m}{f(\mathbf{w}_{m}^k)-f^{\star}}+2\eta(1-4L_P\eta)\sum_{t=1}^{m-1}\Expect{\mathcal{F}^k_t}{f(\mathbf{w}_{t}^k)-f^{\star}}\nonumber\\
\leq &\Expect{\mathcal{F}^{k-1}_m}{\left\|\mathbf{w}^k_0-\bar{\mathbf{w}}^{k}_{0}\right\|^2}+8L_P\eta^2\Expect{\mathcal{F}^k_{t-1}}{f(\mathbf{w}_{0}^k)-f^{\star}+m(f(\tilde{\mathbf{w}}^{k-1})-f^{\star})},\nonumber
\end{align}
which together with $\Expect{\mathcal{F}^k_m}{\left\|\mathbf{w}^k_m-\bar{\mathbf{w}}^{k}_{m}\right\|^2}\geq0$, $2\eta\Expect{\mathcal{F}^k_m}{f(\mathbf{w}_{m}^k)-f^{\star}}\geq0$, $2\eta>2\eta(1-4L_P\eta)>0$ and $\mathbf{w}^k_0=\tilde{\mathbf{w}}^{k-1}$ implies that
\begin{align}
&2\eta(1-4L_P\eta)\sum_{t=1}^{m}\Expect{\mathcal{F}^k_m}{f(\mathbf{w}_{t}^k)-f^{\star}}\nonumber\\
\leq & \Expect{\mathcal{F}^{k-1}_m}{\left\|\mathbf{w}^k_0-\bar{\mathbf{w}}^{k}_{0}\right\|^2}
+8L_P\eta^2(m+1)\Expect{\mathcal{F}^{k-1}_m}{f(\tilde{\mathbf{w}}^{k-1})-f^{\star}},\label{eq:objdiffineq}
\end{align}
where we use the fact that $\Expect{\mathcal{F}^{k}_{t-1}}{f(\mathbf{w}^{k}_0)-f^{\star}}=\Expect{\mathcal{F}^{k}_{t-1}}{f(\tilde{\mathbf{w}}^{k-1})-f^{\star}}=\Expect{\mathcal{F}^{k-1}_m}{f(\tilde{\mathbf{w}}^{k-1})-f^{\star}}$.
By the convexity of $f(\cdot)$, we have
\begin{align}
f(\tilde{\mathbf{w}}^k)=f\left(\frac{1}{m}\sum_{t=1}^{m}\mathbf{w}_{t}^k\right)\leq\frac{1}{m}\sum_{t=1}^{m}f(\mathbf{w}_{t}^k).\nonumber
\end{align}
Thus, we have
\begin{align}
m\left(f(\tilde{\mathbf{w}}^k)-f^{\star}\right)\leq\sum_{t=1}^{m}\left(f(\mathbf{w}_{t}^k)-f^{\star}\right),\label{eq:avgconvexityineq}
\end{align}
Considering \LemmaRef{lemma:strongineq} with bounded $\{\tilde{\mathbf{w}}^{k-1}\}$, $\tilde{\mathbf{w}}^{k-1}=\mathbf{w}^k_{0}\in\mathcal{W}$ and $\bar{\mathbf{w}}^k_{0}=\Pi_{\mathcal{W}^\star_c}(\mathbf{w}^k_{0})$, we have
\begin{align}
f(\tilde{\mathbf{w}}^{k-1})-f^{\star}=f(\mathbf{w}^k_{0})-f^{\star}\geq\frac{\beta}{2}\left\|\mathbf{w}^k_{0}-\bar{\mathbf{w}}^k_{0}\right\|^2,\nonumber
\end{align}
which together with Eqs. (\ref{eq:objdiffineq}), (\ref{eq:avgconvexityineq}) implies that
\begin{align}
&2\eta(1-4L_P\eta)m\Expect{\mathcal{F}^k_m}{f(\tilde{\mathbf{w}}^k)-f^{\star}}\leq\Expect{\mathcal{F}^{k-1}_m}{\left\|\mathbf{w}^{k}_{0}-\bar{\mathbf{w}}^{k}_{0}\right\|^2}\nonumber\\
&+8L_P\eta^2(m+1)\Expect{\mathcal{F}^{k-1}_m}{f(\tilde{\mathbf{w}}^{k-1})-f^{\star}}
\leq\left(8L_P\eta^2(m+1)+\frac{2}{\beta}\right)\Expect{\mathcal{F}^{k-1}_m}{f(\tilde{\mathbf{w}}^{k-1})-f^{\star}}.\nonumber
\end{align}
Thus, we have
\begin{align}
\Expect{\mathcal{F}^k_m}{f(\tilde{\mathbf{w}}^k)-f^{\star}}\leq\left(\frac{4L_P\eta(m+1)}{(1-4L_P\eta)m}+\frac{1}{\beta\eta(1-4L_P\eta)m}\right)\Expect{\mathcal{F}^{k-1}_m}{f(\tilde{\mathbf{w}}^{k-1})-f^{\star}}.\nonumber
\end{align}
Using the above recursive relation and considering the definition of $\rho$ in \EqRef{eq:linearrate}, we complete the proof of the theorem.
\qed
\end{proof}
\begin{remark}\label{remark:strongcondition}
If $f$ is strongly convex with parameter $\tilde{\mu}$, then the inequality in \LemmaRef{lemma:strongineq} holds with $\beta=\tilde{\mu}$. Therefore, we can easily obtain from the proof of \ThemRef{theorem:linearconvergence} that
\begin{align}
\Expect{\mathcal{F}^k_m}{f(\tilde{\mathbf{w}}^k)-f^{\star}}\leq\left(\frac{4L_P\eta(m+1)}{(1-4L_P\eta)m}+\frac{1}{\tilde{\mu}\eta(1-4L_P\eta)m}\right)^k(f(\tilde{\mathbf{w}}^{0})-f^{\star}),\nonumber
\end{align}
which has the same convergence rate as \cite{xiao2014proximal}.
\end{remark}

\begin{proof}[Proof of \ThemRef{theorem:reglinearconvergence}]
We know that the sequence $\{\mathbf{w}^k_t\}$ generated by the proximal step in \EqRef{eq:proxstep} is bounded, which together with \LemmaRef{lemma:regstrongineq} implies that
\begin{align}
F(\mathbf{w}^k_t)-F^\star\geq\frac{\beta}{2}\|\mathbf{w}^k_t-\Pi_{\mathcal{W}^\star_r}(\mathbf{w}^k_t)\|^2,~\forall k,t\geq0.\nonumber
\end{align}
We also note that Lemmas \ref{lemma:stocgradvar}, \ref{lemma:interbound}
are established for constrained optimization problems which are adapted from Corollary 3.5 and Lemma 3.7
for regularized optimization problems in \cite{xiao2014proximal}. Thus, similar inequalities in Lemmas \ref{lemma:stocgradvar}, \ref{lemma:interbound}
also hold for the regularized problem in \EqRef{eq:regularizedprob}. Therefore, each step in the proof of \ThemRef{theorem:linearconvergence} is true by replacing $f(\cdot)$ in \EqRef{eq:constrainedprob} and the projection step with $F(\cdot)$ in \EqRef{eq:regularizedprob} and the proximal step, respectively.
This completes the proof of the theorem.
\qed
\end{proof}

\section{More Experimental Results}\label{appendix:moreresults}
We conduct sensitivity studies for VRPSG on the sampling distribution parameter $\mathbf{p}=[p_1,\cdots,p_n]^T$, the inner iterative number $m$ and the step size $\eta$ by varying one parameter and keeping the other two parameters fixed.
We report the objective function value $f(\tilde{\mathbf{w}}^k)$ vs. the number of gradient evaluations ($\sharp$grad/n) plots in \FigRef{fig:pselection}, \FigRef{fig:mselection} and \FigRef{fig:etaselection}. From these results, we have the following observations: (a) The VRPSG algorithm with
non-uniform sampling (i.e., $p_i=L_i/\sum_{i=1}^nL_i$) is much more efficient than that with uniform sampling (i.e., $p_i=1/n$), which is consistent with the analysis in the remarks of \ThemRef{theorem:linearconvergence}.
(b) In general, the VRPSG algorithm by setting $m=0.5n,n$ has the most stable performance, which indicates that a small or large $m$ will degrade the performance of the VRPSG algorithm.
(c) The optimal step sizes of the VRPSG algorithm on different data sets are slightly different. Moreover, the VRPSG algorithm with step sizes $\eta=1/L_P$ and $\eta=5/L_P$ converges quickly, which demonstrates that
the VRPSG algorithm still performs well even if the step size is much larger than that required in the theoretical analysis ($\eta<0.25/L_P$
is required in \ThemRef{theorem:linearconvergence}). This shows the robustness of the VRPSG algorithm.

\begin{figure*}[!ht]\vspace{-0.0cm}
\begin{minipage}[c]{1.0\linewidth}
\centering
\includegraphics[width=.32\linewidth]{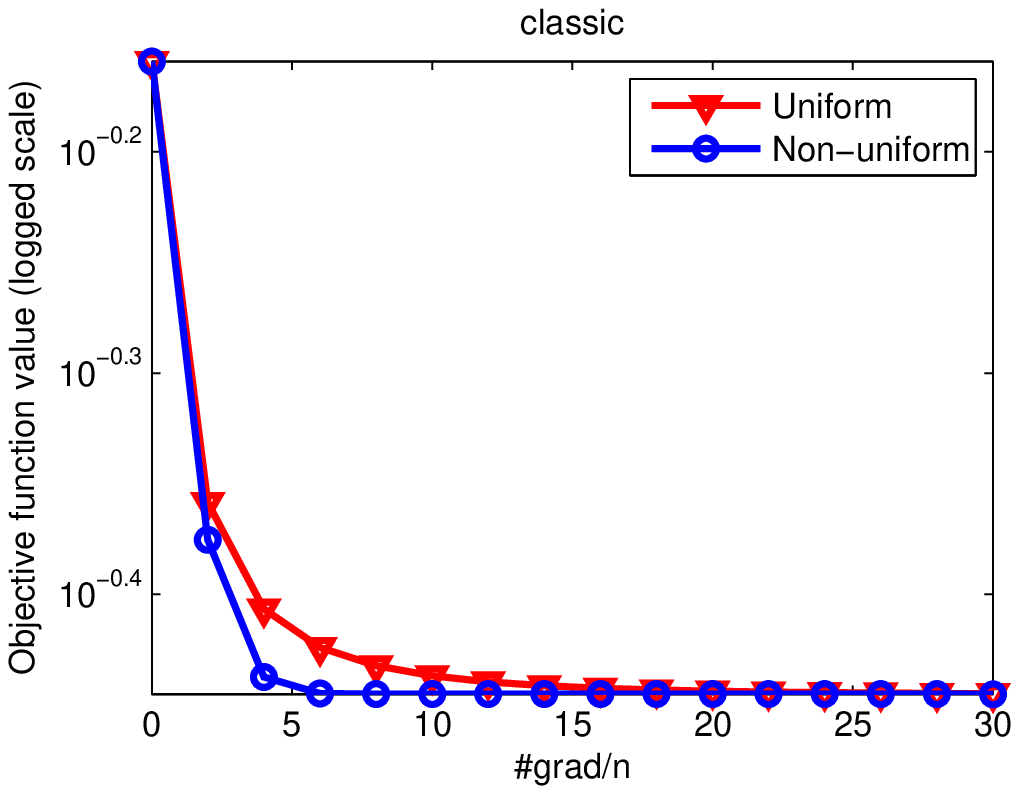}
\includegraphics[width=.32\linewidth]{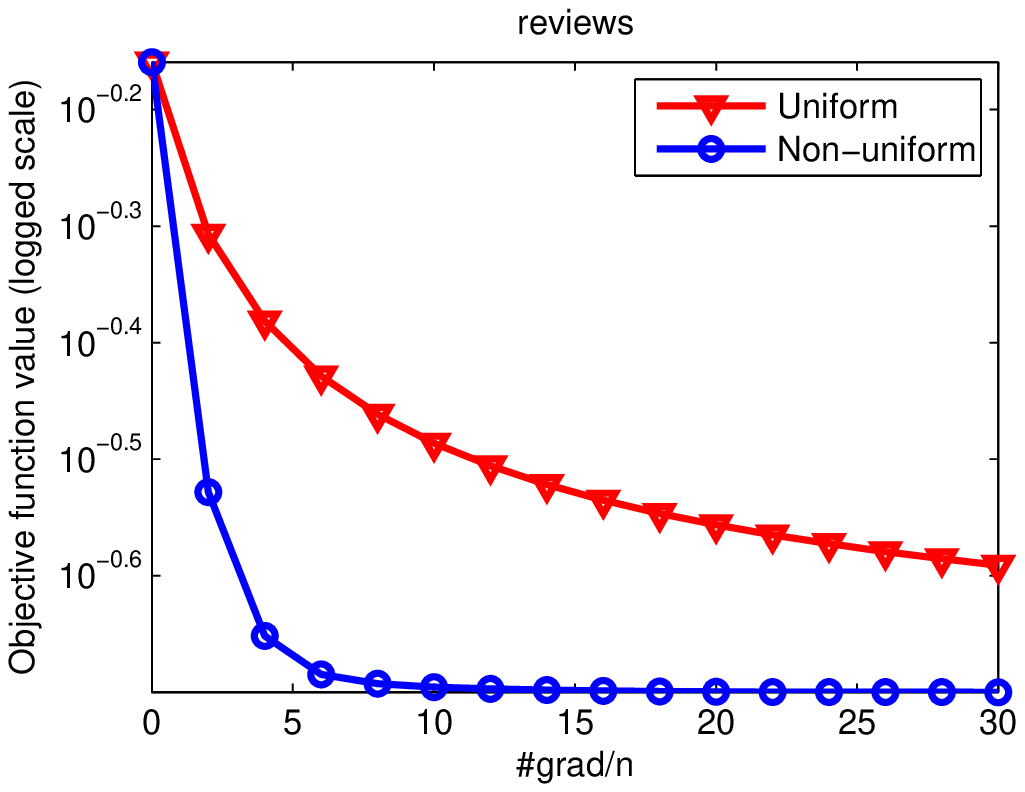}
\includegraphics[width=.32\linewidth]{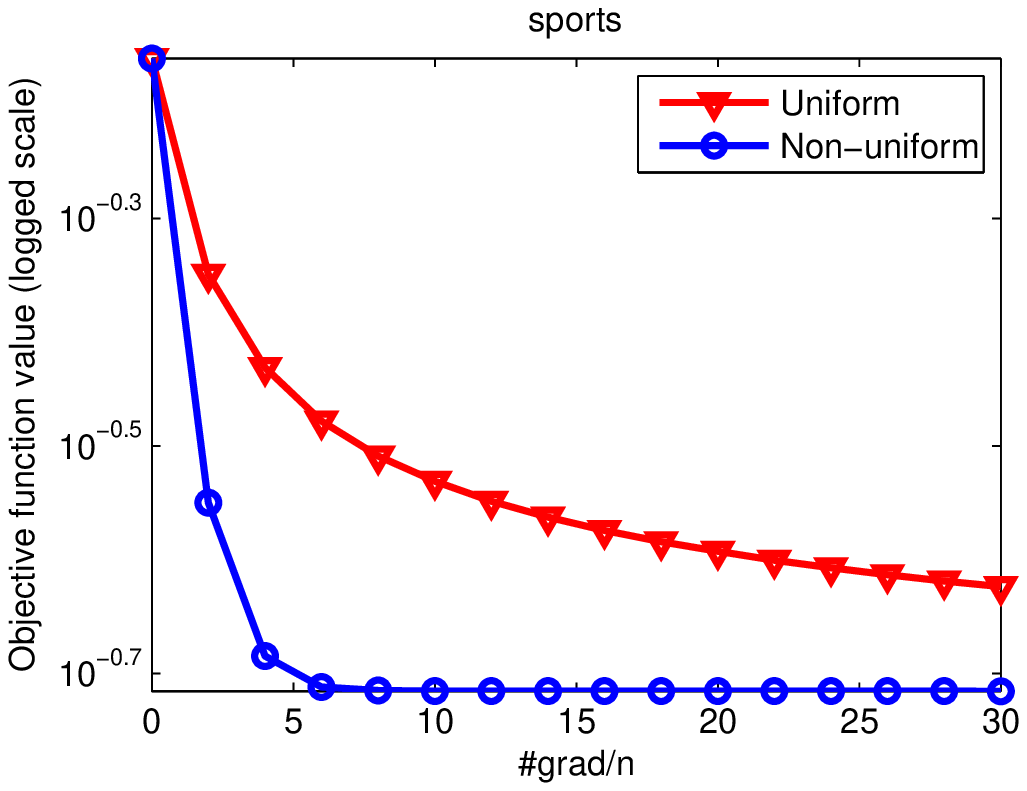}
\end{minipage}
\vskip 0.0cm
\vspace{-0.1cm}\caption{Sensitivity study of VRPSG on the parameter $\mathbf{p}=[p_1,\cdots,p_n]^T$: the objective function value $f(\tilde{\mathbf{w}}^k)$ vs. the number of gradient evaluations ($\sharp$grad/n) plots (averaged on 10 runs).
``Uniform'' and ``Non-uniform'' indicate that $p_i=1/n$ and $p_i=L_i/\sum_{i=1}^nL_i$, respectively. Other parameters are set as $\tau=10$, $m=n$, $\eta=1/L_P$.}
\label{fig:pselection}\vspace{-0.2cm}
\end{figure*}

\begin{figure*}[!ht]\vspace{-0.0cm}
\begin{minipage}[c]{1.0\linewidth}
\centering
\includegraphics[width=.32\linewidth]{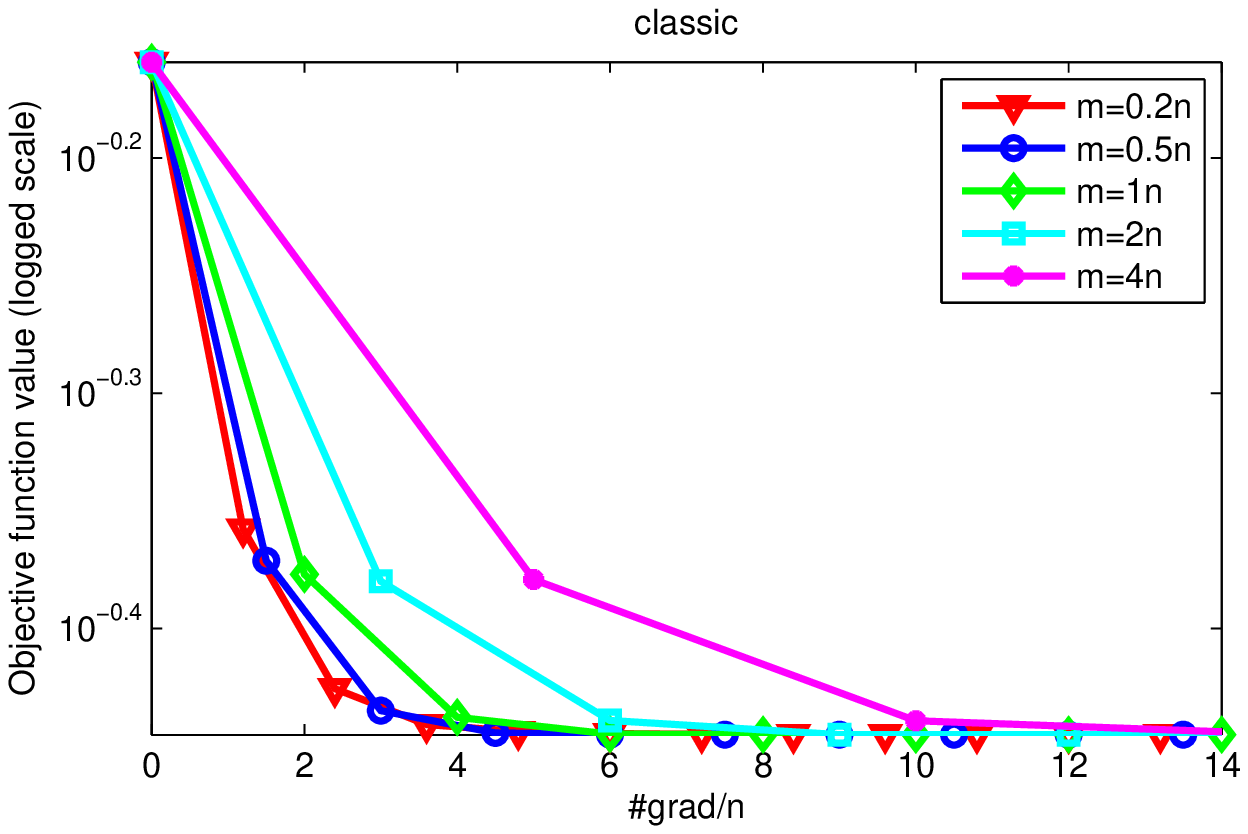}
\includegraphics[width=.32\linewidth]{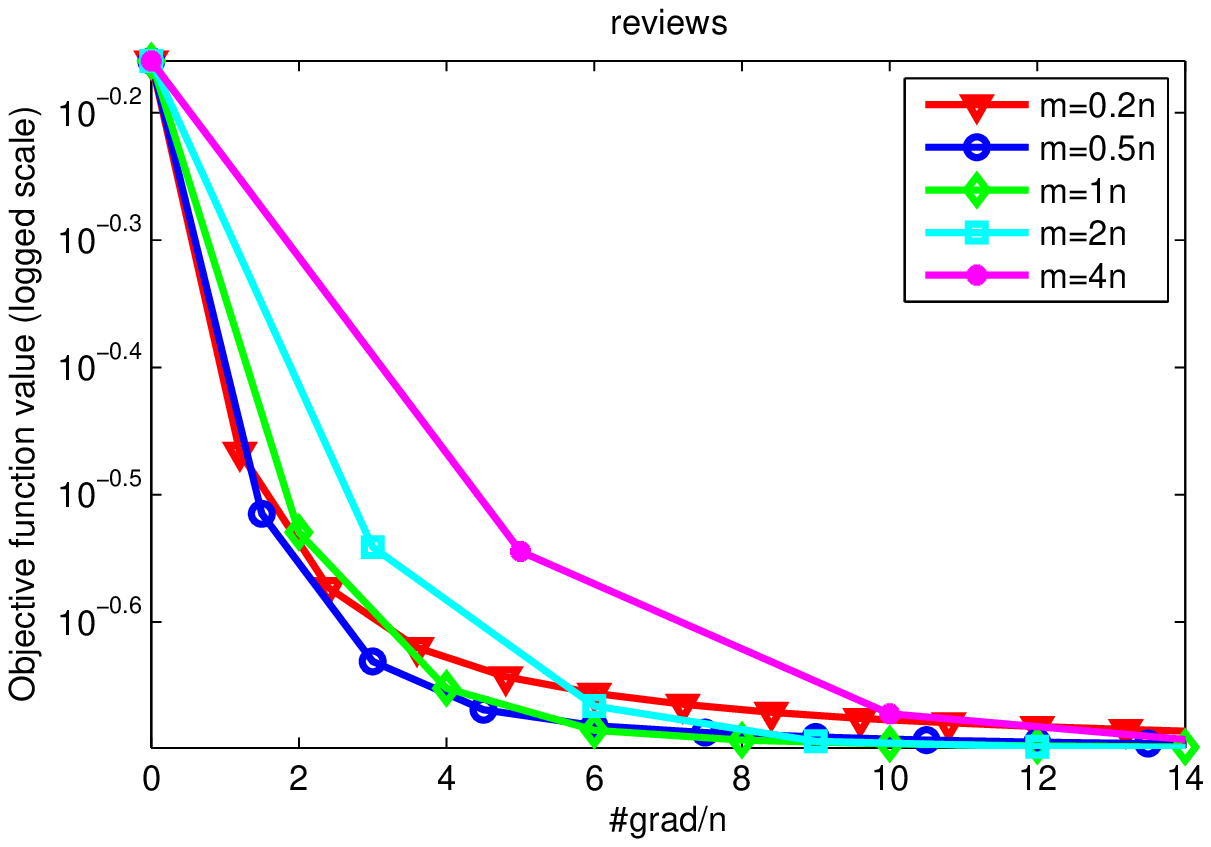}
\includegraphics[width=.32\linewidth]{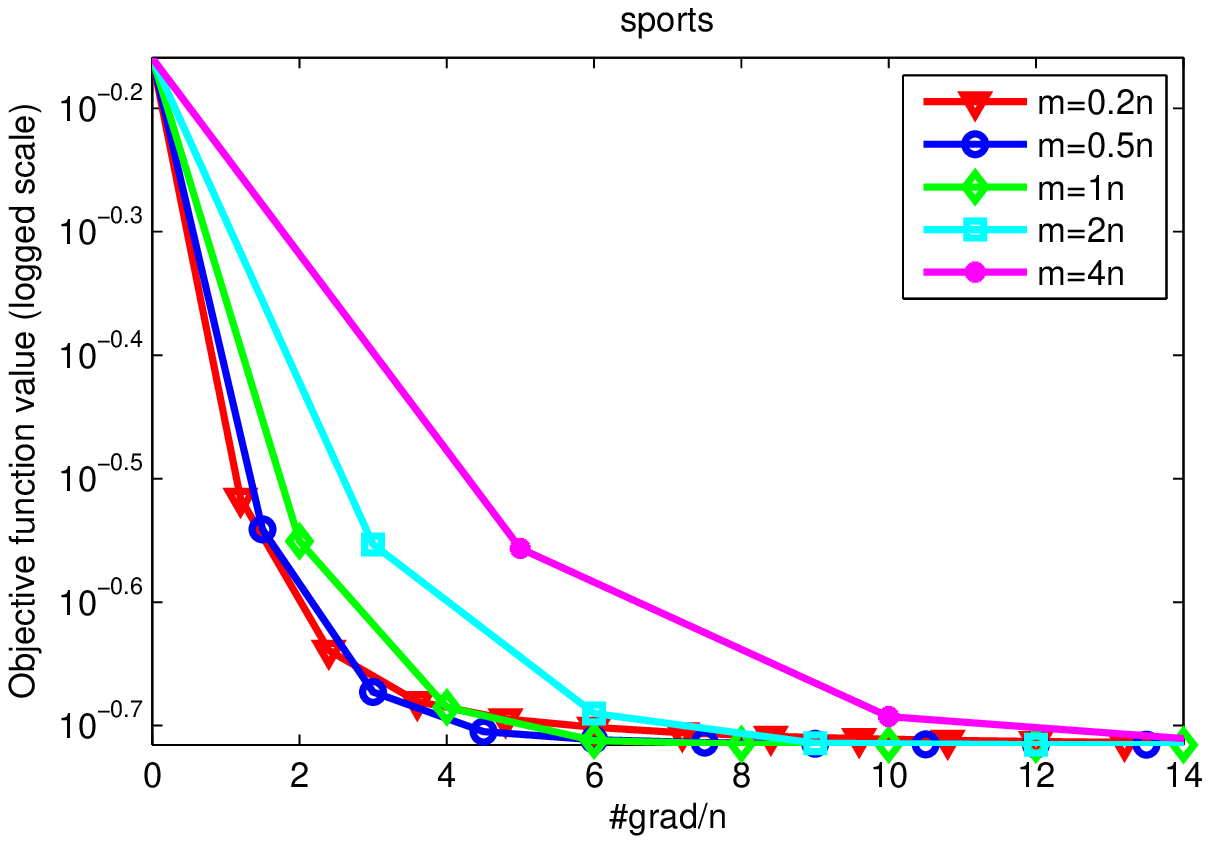}
\end{minipage}
\vskip 0.0cm
\vspace{-0.1cm}\caption{Sensitivity study of VRPSG on the parameter $m$: the objective function value $f(\tilde{\mathbf{w}}^k)$ vs. the number of gradient evaluations ($\sharp$grad/n) plots (averaged on 10 runs).
Other parameters are set as $\tau=10$, $p_i=L_i/\sum_{i=1}^nL_i$, $\eta=1/L_P$.}
\label{fig:mselection}\vspace{-0.2cm}
\end{figure*}

\begin{figure*}[!ht]\vspace{-0.0cm}
\begin{minipage}[c]{1.0\linewidth}
\centering
\includegraphics[width=.32\linewidth]{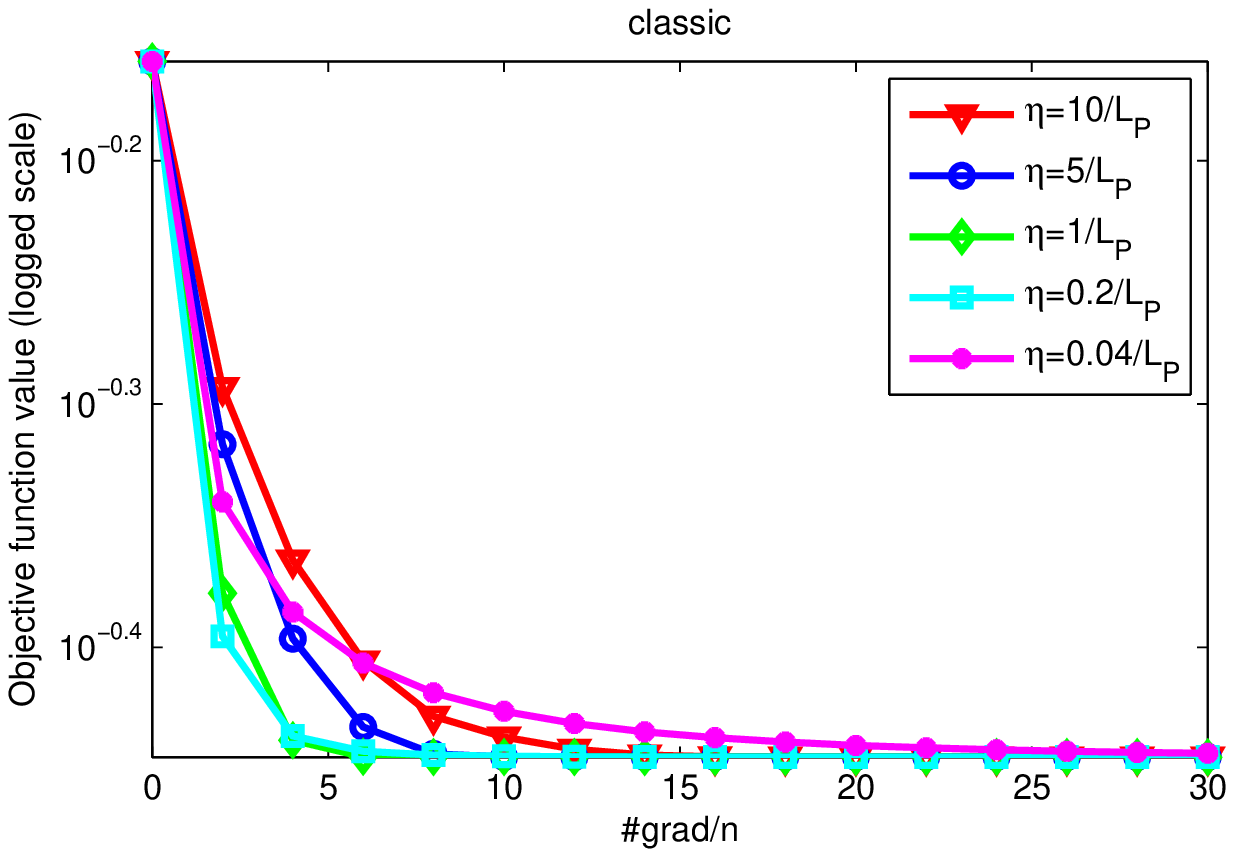}
\includegraphics[width=.32\linewidth]{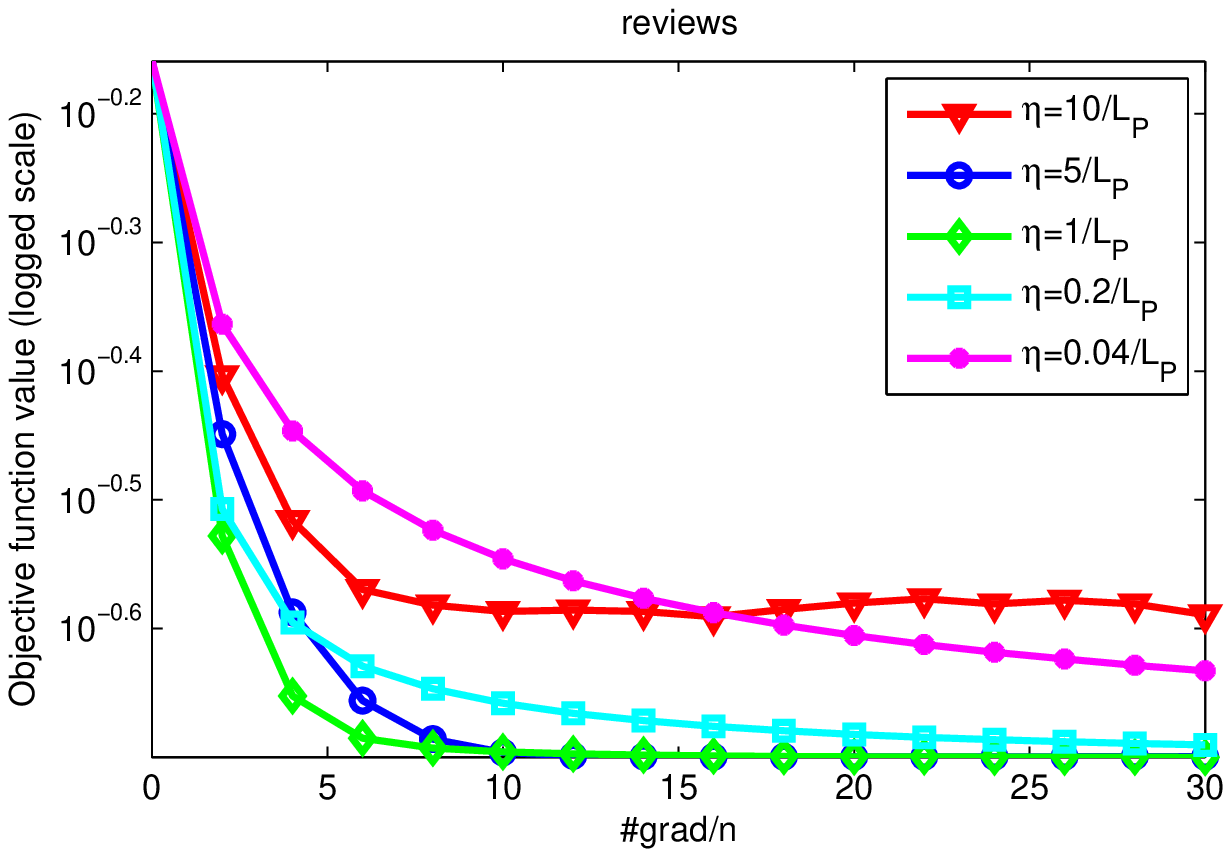}
\includegraphics[width=.32\linewidth]{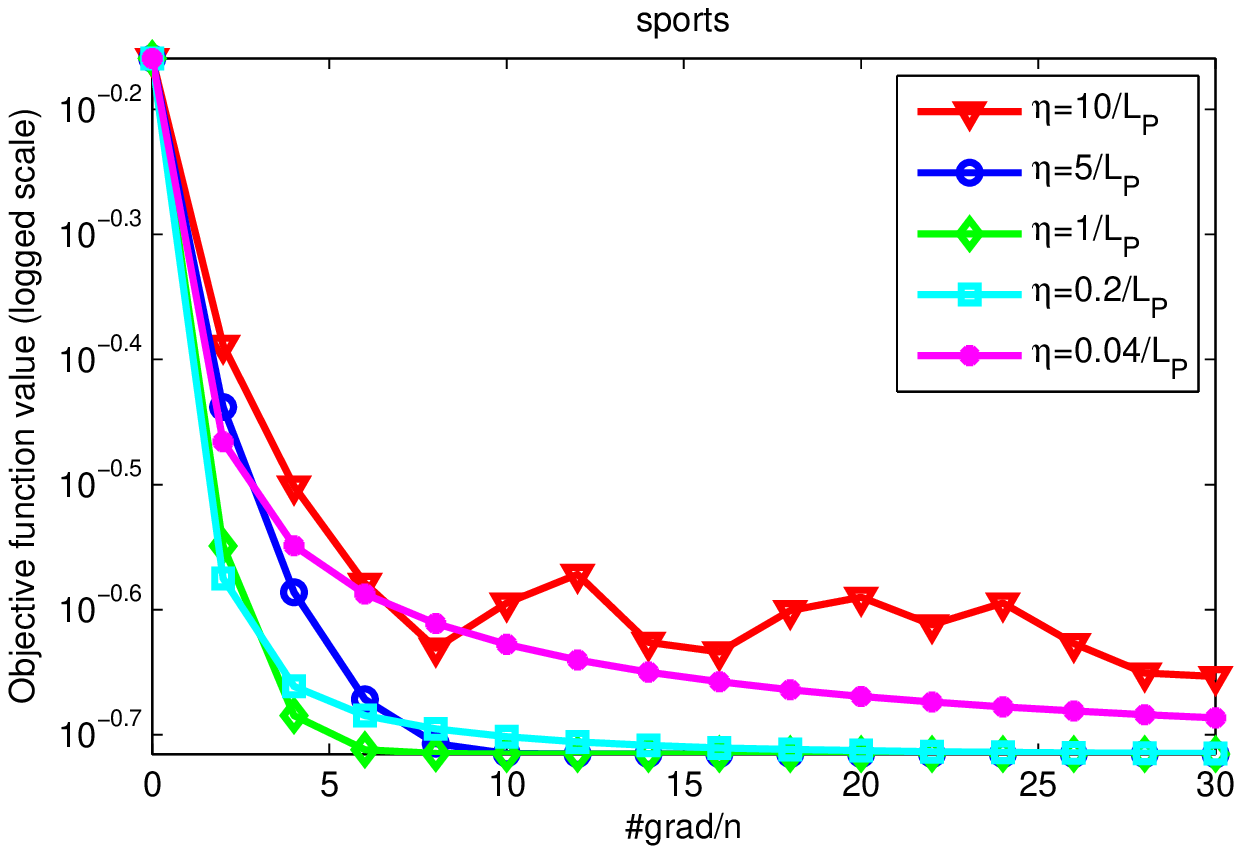}
\end{minipage}
\vskip 0.0cm
\vspace{-0.1cm}\caption{Sensitivity study of VRPSG on the parameter $\eta$: the objective function value $f(\tilde{\mathbf{w}}^k)$ vs. the number of gradient evaluations ($\sharp$grad/n) plots (averaged on 10 runs).
Other parameters are set as $\tau=10$, $m=n$, $p_i=L_i/\sum_{i=1}^nL_i$.}
\label{fig:etaselection}\vspace{-0.2cm}
\end{figure*}

\end{document}